\documentclass[12pt,reqno,a4paper]{amsart}
\usepackage{amsmath}
\usepackage{amsfonts}
\usepackage{amssymb}
\usepackage[all,cmtip]{xy}           
\usepackage{bm}
\usepackage{bbm}
\usepackage{bbding}
\usepackage{txfonts}
\usepackage{amscd}
\usepackage{xspace}
\usepackage[shortlabels]{enumitem}
\usepackage{ifpdf}

\usepackage{subfig}
\ifpdf
\usepackage[colorlinks,final,hyperindex]{hyperref}
\else
\usepackage[colorlinks,final,hyperindex,hypertex]{hyperref}
\fi
\usepackage{tikz}
\usepackage[active]{srcltx}

\usepackage{tikz-cd}
\usepackage{tikz}

    
\topmargin -.8cm \textheight 22.8cm \oddsidemargin 0cm \evensidemargin -0cm \textwidth 16.3cm

\makeatletter

\newtheorem{thm}{Theorem}[section]
\newtheorem{lem}[thm]{Lemma}

\newtheorem{cor}[thm]{Corollary}
\newtheorem{pro}[thm]{Proposition}
\theoremstyle{definition}
\newtheorem{ex}[thm]{Example}
\newtheorem{rmk}[thm]{Remark}
\newtheorem{defi}[thm]{Definition}

\newcommand{\nc}{\newcommand}
\newcommand{\delete}[1]{}

	\nc{\mlabel}[1]{\label{#1}}  
	\nc{\mcite}[1]{\cite{#1}}  
	\nc{\mref}[1]{\ref{#1}}  
	\nc{\meqref}[1]{\eqref{#1}}  
	\nc{\mbibitem}[1]{\bibitem{#1}} 

\delete{
	\nc{\mcite}[1]{\cite{#1}{{\bf{{\ }(#1)}}}}  
	\nc{\mref}[1]{\ref{#1}{{\bf{{\ }(#1)}}}}  
	\nc{\meqref}[1]{\eqref{#1}{{\bf{{\ }(#1)}}}}  
	\nc{\mbibitem}[1]{\bibitem[\bf #1]{#1}} 
}

\setlength{\baselineskip}{1.8\baselineskip}

\newcommand {\emptycomment}[1]{}


\nc{\oprn}{\theta}

\nc{\ENL}{{\rm ENL }}
\nc{\ENLE}{{\rm ENL}}
\nc{\ENE}{{\rm EN}}
\nc{\EN}{{\rm EN }}
\nc{\NL}{{\rm NL }}

\nc{\calo}{\mathcal{O}}
\nc{\oop}{$\mathcal{O}$-operator\xspace}
\nc{\oops}{$\mathcal{O}$-operators\xspace}
\nc{\mrho}{{\bm{\varrho}}}
\nc{\bfk}{\mathbf{K}}
\nc{\invlim}{\displaystyle{\lim_{\longleftarrow}}\,}
\nc{\ot}{\otimes}


\nc{\eval}[1]{\Big|_{#1}}

\newcommand{\add}{\frka\frkd}

\newcommand{\be }{\begin{equation}}
	\newcommand{\ee }{\end{equation}}

\def\lcf{\lbrack\! \lbrack}
\def\rcf{\rbrack\! \rbrack}

\newcommand{\g}{\mathfrak g}
\newcommand{\h}{\mathfrak h}

\nc{\RR}{\mathbb{R}}

\nc{\CC}{\mathbb{C}}



\newcommand{\huaE}{\mathcal{E}}

\newcommand{\huaG}{\mathcal{G}}


\newcommand{\huaI}{\mathcal{I}}

\newcommand{\frka}{\mathfrak a}

\newcommand{\frkd}{\mathfrak d}

\newcommand{\frkE}{\mathfrak E}

\newcommand{\Id}{{\rm{Id}}}

\newcommand{\br}[1]{   [ \cdot,    \cdot  ]   }

\newcommand{\Hom}{\mathrm{Hom}}

\newcommand{\gl}{\mathfrak {gl}}

\newcommand{\ad}{\mathrm{ad}}

\nc{\CV}{\mathbf{C}}

\begin{document}

	\title[ENL algebras]{Equivariant Nijenhuis Lie algebras:\\
		extensions to classical Lie-theoretic structures}

	\author{Shuai Hou}
	\address{Department of Mathematics, Jilin University, Changchun 130012, Jilin, China}
	\email{hshuaisun@jlu.edu.cn}
	
\author{Zohreh Ravanpak}

\address{
	School of Physical and Mathematical Sciences,
	Nanyang Technological University,
	Singapore
	\\
	and
	\\
	Max Planck Institute for Mathematics,
	Bonn, Germany
}
\email{zohreh.ravanpak@ntu.edu.sg; ravanpak@mpim-bonn.mpg.de}

	\author{Yunhe Sheng}
	\address{Department of Mathematics, Jilin University, Changchun 130012, Jilin, China}
	\email{shengyh@jlu.edu.cn}
	
	
	\begin{abstract}
		We develop a theory of \emph{equivariant Nijenhuis Lie algebras} (\ENL algebras), namely Lie algebras equipped with Nijenhuis operators satisfying an equivariance condition with respect to the adjoint representation. This compatibility condition allows classical Lie bialgebra constructions to extend naturally to the operator-equipped setting. Within this framework, we define \ENL bialgebras and establish the associated notions of matched pairs, Manin triples, and Drinfel'd doubles. We show that coboundary \ENL bialgebras are characterized by $\EN$ $r$-matrices satisfying an equivariant classical Yang--Baxter equation. We further introduce $\EN$-relative Rota--Baxter operators and prove that they provide an operator-theoretic realization of such $r$-matrices, leading to descendant \ENL algebras and to solutions of the classical Yang--Baxter equation on semidirect \ENL algebras. In the quadratic case, this construction recovers Rota--Baxter operators of weight zero.
		Finally, we extend the \EN framework to pre-Lie algebras and show that pre-\ENL algebras naturally induce associated ENL structures.

	\end{abstract}

	
	\keywords{Equivariant Nijenhuis Lie algebra, Equivariant Nijenhuis Lie bialgebra,  Matched pair, classical Yang--Baxter equation, Rota--Baxter operator\\
		\quad  2020 \emph{Mathematics Subject Classification.}   17B62, 17B10, 17B38.}
	
	\maketitle
	
	\tableofcontents

	\setcounter{section}{-1}
	\section{Introduction}

	Nijenhuis operators on Lie algebras provide a natural mechanism for encoding compatibility between Lie brackets and for generating new brackets via iterated deformations. In this way, they organize families of compatible Lie algebra structures and play a central role in hierarchy generation and in the formation of doubled or interconnected Lie-theoretic objects. When combined with classical constructions such as matched pairs, Manin triples, and Drinfel'd doubles, Nijenhuis operators raise structural questions concerning stability under duality, doubling, and bialgebra-type extensions. Understanding these compatibility mechanisms is essential for a systematic theory of Lie algebras equipped with additional operator structure.
	
	\smallskip
	
	In classical Lie theory, finite-dimensional Lie bialgebras admit equivalent descriptions in terms of matched pairs or Manin triples. A matched pair encodes compatible mutual actions of a Lie algebra and its dual such that the associated bicrossed product carries a natural Lie bracket, while a Manin triple consists of a quadratic Lie algebra decomposed into complementary isotropic Lie subalgebras. This equivalence is realized by the Drinfel'd double construction, which identifies Lie bialgebras with quadratic Lie algebras containing both a Lie algebra and its dual as isotropic subalgebras. Within this framework, classical $r$-matrices arise from Lagrangian splittings of the Drinfel'd double, equivalently from choices of complementary isotropic subalgebras in a Manin triple. These $r$-matrices satisfy the classical Yang--Baxter equation and provide a fundamental mechanism underlying many integrable systems \cite{BD,D,KM,RS}. In this sense, the Drinfel'd double furnishes a unifying quadratic Lie algebra setting in which Lie bialgebras, Manin triples, and classical $r$-matrices emerge from different choices of complementary isotropic subalgebras.

\smallskip

Motivated by this compatibility paradigm, we develop an analogous correspondence for Lie algebras equipped with Nijenhuis-type operators. Our basic object is an \emph{equivariant Nijenhuis Lie algebra} (\ENL algebra), consisting of a Lie algebra $\g$ together with a Nijenhuis operator $E$ satisfying a natural equivariance condition with respect to the adjoint representation. Within this setting, we introduce matched pairs of \ENL algebras and show that their bicrossed products inherit canonical \ENL structures. Conversely, we define Manin triples of \ENL algebras and prove that these notions are equivalent. More precisely, for a Lie algebra $\g$ endowed with an equivariant Nijenhuis operator $E$, the pair $(\g,E)$ and its dual $(\g^*,E^*)$ form a matched pair if and only if the direct sum $(\g\oplus\g^*,\,E\oplus E^*)$ carries a quadratic \ENL algebra structure for which $(\g,E)$ and $(\g^*,E^*)$ are isotropic \ENL subalgebras.

Rota--Baxter operators complement this framework by encoding compatible splittings of the Lie bracket and by providing operator-theoretic realizations of factorizable and coboundary Lie bialgebras \cite{Bai,BGN2010,Ku,STS}. We extend this correspondence to the \ENL setting by introducing \ENL\ Rota--Baxter algebras, which play a role parallel to their classical counterparts. Furthermore, we define \ENE-relative Rota--Baxter algebras and show that they give rise to classical $\EN$ $r$-matrices, namely solutions of the classical Yang--Baxter equation adapted to the equivariant Nijenhuis context. These structures naturally lift to coboundary \ENL bialgebras and their associated doubles.

Finally, we extend the \ENL framework to \emph{pre-Lie algebras}, which arise naturally in affine geometry and deformation theory. A pre-Lie algebra refines a Lie algebra structure through a nonassociative product whose commutator recovers the underlying Lie bracket. We introduce \emph{pre-\ENL algebras}, and prove that the equivariance condition is compatible with the \ENL constructions developed above, thereby allowing pre-\ENL algebras to integrate naturally into the same compatibility framework. This refinement establishes further connections with relative Rota--Baxter operators and yields additional constructions of classical $r$-matrices.

		\smallskip
		It is important to distinguish \ENL bialgebras from the previously introduced \NL bialgebras \cite{Zohreh}, which may be regarded as algebraic counterparts of Poisson--Nijenhuis structures on Lie bialgebras. Such \NL bialgebras provide an algebraic framework for certain bi-Hamiltonian systems, including those studied in \cite{BaMaRa}. In these constructions, \NL bialgebras that act as hierarchy generators typically impose an asymmetric equivariance condition. This reflects the geometric situation on a Poisson--Nijenhuis manifold $(M,\Pi,N)$, where equivariance is required either for the Nijenhuis tensor $N$ or for its transpose $N^*$ on $\mathsf{T}^*M$, but not simultaneously for both.
			While this asymmetry is natural from a geometric viewpoint, it obstructs classical double constructions. In general, the block-diagonal operator $N\oplus N^*$ fails to define a Nijenhuis operator on the double $\g\bowtie\g^*$, and consequently the classical correspondence between matched pairs and Manin triples may break down in the \NL setting. The \ENL framework resolves this issue by requiring equivariance for both $E$ and $E^*$, thereby ensuring that the double carries a quadratic \ENL algebra structure and restoring the matched-pair/Manin-triple correspondence.

\smallskip

Previous work has explored Nijenhuis-type structures in both geometric and algebraic settings. On the geometric side, invariant Poisson and Poisson–quasi-Nijenhuis structures on Lie groups have been investigated in \cite{HaZo,RaReHa}, while Poisson–Nijenhuis manifolds provide a fundamental framework for bi-Hamiltonian systems and integrable hierarchies \cite{DamianouFernandes,KosmannPN,Magri,MagriMorosiAnn}. From an algebraic perspective, Nijenhuis operators on Lie algebras and Poisson pencils have long been used to construct compatible brackets, recursion operators, and integrable systems \cite{Panasyuk2006}.  In addition, \NL bialgebras were introduced in \cite{Zohreh} as algebraic counterparts of Poisson-Nijenhuis structures, and classical $r$-matrix methods further relate Lie bialgebra doubles to integrability phenomena \cite{FokasFuchssteiner,Vicedo2015}. These developments provide the broader context and motivation for the equivariant Nijenhuis framework developed in the present work.
\smallskip

It is worth clarifying the position of the present work relative to recent studies on Nijenhuis Lie bialgebras. In \cite{Das-1}, such structures are developed from a cohomological and deformation-theoretic viewpoint, based on admissibility conditions between generally independent Nijenhuis operators on the algebraic and coalgebraic sides. In \cite{Li-Ma}, the same definition is adopted, with emphasis on the coboundary case and explicit constructions arising from solutions of the classical Yang--Baxter equation.
By contrast, the present paper addresses a complementary structural problem. By imposing equivariance and self-duality conditions on the Nijenhuis operator—compatible with duality and quadratic structures—we obtain a setting in which the classical Lie-theoretic correspondences between matched pairs, Manin triples, Drinfel'd doubles, and classical $r$-matrices persist within the category of operator-equipped Lie algebras. Since these structures satisfy symmetric equivariance conditions on both the operators $E$ and $E^*$, we refer to them as \ENL bialgebras.

	\smallskip
	
A central theme of this work is the stability of bialgebraic constructions under the introduction of compatible operator data. Whether one begins with a Lie bialgebra and imposes an equivariant Nijenhuis structure, or instead develops a bialgebra theory directly within the category of operator-equipped Lie algebras, the resulting structures coincide up to natural equivalence. In this sense, the Nijenhuis operator is not a supplementary decoration but an intrinsic component of the structure, governing duality, the existence of doubles, and the organization of compatible hierarchies. Rota--Baxter operators and pre-Lie algebras serve throughout as concrete mechanisms that both illustrate and generate these constructions.

	\medskip
	\noindent\textbf{Structure of the paper.}
	Section~\ref{sec:l} develops the basic theory of \ENL algebras, including their representations, duals, and quadratic
structures.  These results provide the algebraic framework for the matched
pairs and Manin triples studied later.
		Section~\ref{sec:mp} develops the theory of matched pairs and Manin triples
	of \ENL algebras and shows that, as in the classical Lie setting, these two
	notions provide equivalent descriptions of \ENL doubles.
In Section~\ref{sec:bia}, we develop the structure theory of \ENL bialgebras,
with particular emphasis on their realization via matched pairs and
Drinfel'd doubles in the \ENL setting. Section~\ref{sec:bia2} studies equivariant Nijenhuis structures on
quadratic Rota-Baxter Lie algebras, introduces the notion of \ENLE-{\rm RB}
algebras, and shows that the \ENL framework is stable under the
Rota-Baxter descendant construction. In Section~\ref{sec:enr}, we study the classical Yang-Baxter equation in the
equivariant Nijenhuis setting, introduce \EN $r$-matrices, and show that they
generate coboundary \ENL  bialgebras. Section~\ref{sec.6} is devoted to \ENE-relative Rota-Baxter operators and the \ENL structures they induce, including descendant \ENL algebras and associated bialgebra-type constructions.
	Finally, Section~\ref{sec.7} extends the theory to pre-Lie algebras by
	introducing equivariant Nijenhuis operators on pre-Lie algebras, and by defining the
	resulting pre-\ENL algebras as refinements of \ENL algebras.

\medskip

Throughout this paper, we work over an algebraically closed field of characteristic zero, and all vector spaces are assumed to be finite-dimensional.

\section{Equivariant Nijenhuis Lie (\ENLE) algebras}\label{sec:l}
In this section, we introduce a rigid subclass of Nijenhuis operators on Lie
algebras, namely \emph{equivariant} Nijenhuis operators.  We begin by recalling
the standard notion of Nijenhuis torsion.  For a linear operator
$N:\g\to\g$ on a Lie algebra $(\g,[\cdot,\cdot]_{\g})$, its Nijenhuis torsion is
the bilinear map $\mathcal T_N:\g\times\g\to\g$ defined by
\begin{equation}\label{Nijenhuis-operator}
	\mathcal{T}_N(x,y)
	:=
	[Nx,Ny]_{\g}
	-
	N\big([Nx,y]_{\g}+[x,Ny]_{\g}-N[x,y]_{\g}\big),
	\qquad \forall x,y\in\g.
\end{equation}
The operator $N$ is called a \emph{Nijenhuis operator} if $\mathcal T_N\equiv 0$.
In this case the \emph{$N$-deformed bracket}
\[
[x,y]_N := [Nx,y]_{\g} + [x,Ny]_{\g} - N[x,y]_{\g}
\]
is also a Lie bracket on $\g$, and $N$ becomes a Lie algebra morphism
from $(\g,[\cdot,\cdot]_N)$ to $(\g,[\cdot,\cdot]_{\g})$.
Accordingly, the deformed adjoint representation
${\ad}^N:\g\to\gl(\g)$ of the Lie algebra $(\g,[\cdot,\cdot]_N)$ is given by
\begin{equation}\label{ad-n}
	{\ad}^{N}_{x}
	=
	\underline{\ad}_{\,x}+\ad_{Nx},
	\qquad  x\in\g,
\end{equation}
where $\underline{\ad}_{\,x}:=[\ad_x,N]=\ad_xN-N\ad_x$ (cf.~\cite{Zohreh}).

In this notation, the Nijenhuis condition can be written compactly as
\begin{equation}\label{N-ad}
	N\circ \underline{\ad}_{\,x}=\underline{\ad}_{\,Nx},
	\qquad  x\in\g,
\end{equation}
expressing the compatibility between $N$ and the adjoint action.
Iterating this compatibility produces hierarchies of deformed brackets, which
is one reason Nijenhuis operators play a central role in deformation theory and
integrable systems; see \cite{KoMa}.
A Lie algebra equipped with a Nijenhuis operator is called a
\emph{Nijenhuis Lie algebra (\NL algebra)} and is denoted by
$(\g,[\cdot,\cdot]_{\g},N)$.

We now single out a rigid subclass of Nijenhuis operators by imposing an
additional equivariance condition with respect to the adjoint action
$\ad_x(y)=[x,y]_{\g}$.

\begin{defi}
	Let $(\g,[\cdot,\cdot]_{\g})$ be a Lie algebra.
	A linear operator $E:\g\to\g$ is called an
	\emph{equivariant Nijenhuis operator} if $\underline{\ad}_{\,x}=0$, i.e. $E$ commutes with the adjoint
	representation:
	\begin{equation}\label{equivariant-Nijenhuis}
		E[x,y]_{\g} = [x,Ey]_{\g},
		\qquad\text{equivalently}\qquad
		E\circ\ad_x=\ad_x\circ E,
		\quad \forall\,x,y\in\g.
	\end{equation}
	A Lie algebra equipped with such an operator is called an
	\emph{equivariant Nijenhuis Lie algebra} (\ENL algebra) and is denoted by
	$(\g,[\cdot,\cdot]_{\g},E)$, or simply by $(\g,E)$.
\end{defi}

\smallskip

If, in addition, $E^2=\Id$, then $E$ defines a \emph{product structure} on $\g$;
see \cite{Andrada}.
Note that~\eqref{equivariant-Nijenhuis} is symmetric in the arguments of the
Lie bracket. Equivalently, it can be written as
\[
E[x,y]_\g = [Ex,y]_\g,\qquad \forall x,y\in\g.
\]

\begin{lem}\label{lem:equiv-implies-nij}
	Let $(\g,[\cdot,\cdot]_\g,E)$ be an \ENL algebra.
	Then $E$ is a Nijenhuis operator.
\end{lem}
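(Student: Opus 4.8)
The plan is to substitute the equivariance identity \eqref{equivariant-Nijenhuis} directly into the definition \eqref{Nijenhuis-operator} of the Nijenhuis torsion and check that every term collapses to $E^2[x,y]_\g$. The only inputs are the two equivalent forms of equivariance, $E[x,y]_\g = [Ex,y]_\g$ and $E[x,y]_\g = [x,Ey]_\g$, valid for all $x,y\in\g$.

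First I would simplify the inner expression $[Ex,y]_\g + [x,Ey]_\g - E[x,y]_\g$: applying the first form to the first term and the second form to the second term turns it into $E[x,y]_\g + E[x,y]_\g - E[x,y]_\g = E[x,y]_\g$. Hence the subtracted term in $\mathcal{T}_E(x,y)$ equals $E\big([Ex,y]_\g + [x,Ey]_\g - E[x,y]_\g\big) = E\big(E[x,y]_\g\big) = E^2[x,y]_\g$. Next I would rewrite the leading bracket: using $E[a,b]_\g = [Ea,b]_\g$ with $a=x$, $b=Ey$ gives $[Ex,Ey]_\g = E[x,Ey]_\g$, and then $[x,Ey]_\g = E[x,y]_\g$ yields $[Ex,Ey]_\g = E^2[x,y]_\g$. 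Combining, $\mathcal{T}_E(x,y) = E^2[x,y]_\g - E^2[x,y]_\g = 0$, so $E$ is a Nijenhuis operator.

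There is no substantial obstacle here; the statement is an immediate algebraic consequence of equivariance, and the only point requiring care is to track which of the two equivalent forms of \eqref{equivariant-Nijenhuis} is applied at each step so the cancellation stays transparent. Alternatively, one can argue through \eqref{N-ad}: equivariance says precisely $\underline{\ad}_{\,x}=0$ for all $x$, hence also $\underline{\ad}_{\,Ex}=0$, so both sides of the compact Nijenhuis condition $E\circ\underline{\ad}_{\,x}=\underline{\ad}_{\,Ex}$ vanish trivially, giving an even shorter proof of the same fact.
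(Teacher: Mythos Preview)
Your proposal is correct. The paper's proof is precisely your alternative argument via \eqref{N-ad}: since equivariance means $\underline{\ad}_{\,x}=0$ for all $x$, condition \eqref{N-ad} holds trivially. Your main direct computation of the torsion is also valid and makes the cancellation explicit, but the one-line route through $\underline{\ad}_{\,x}=0$ is what the paper actually records.
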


\begin{proof}
	Since $\underline{\ad}_{\,x}=0,$ it follows that \eqref{N-ad} holds.
\end{proof}

 In this case, the $E$-deformed bracket associated with an
\ENL algebra $(\g,[\cdot,\cdot]_{\g},E)$ takes the particularly simple form
\begin{equation}\label{E-deformed-bracket}
	[x,y]_{E} := [Ex,y]_{\g},
	\qquad \forall\,x,y\in\g.
\end{equation}
The simplification of the $E$-deformed bracket has important structural
consequences.

\begin{pro}\label{induce-Lie-algebra}
	Let $(\g,[\cdot,\cdot]_{\g},E)$ be an \ENL algebra.
	Then $(\g,[\cdot,\cdot]_{E},E)$ is an \ENL algebra, where
	$[x,y]_{E}:=[Ex,y]_{\g}$.
\end{pro}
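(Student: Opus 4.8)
The plan is to verify the two axioms of an \ENL algebra for $(\g,[\cdot,\cdot]_E,E)$ in turn: that $[\cdot,\cdot]_E$ is a Lie bracket on $\g$, and that $E$ satisfies the equivariance condition \eqref{equivariant-Nijenhuis} relative to this new bracket.

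For the bracket, the quickest route is to invoke Lemma~\ref{lem:equiv-implies-nij}: since $E$ is a Nijenhuis operator on $(\g,[\cdot,\cdot]_\g)$, the general $N$-deformed bracket (with $N=E$) is a Lie bracket, and by \eqref{equivariant-Nijenhuis} it collapses to $[x,y]_E=[Ex,y]_\g$, which is exactly \eqref{E-deformed-bracket}. If one prefers a self-contained argument, skew-symmetry is immediate from $[Ex,y]_\g=E[x,y]_\g=[x,Ey]_\g=-[Ey,x]_\g$, and the Jacobi identity for $[\cdot,\cdot]_E$ follows by using \eqref{equivariant-Nijenhuis} to transfer every occurrence of $E$ onto a single argument in each term and then applying the Jacobi identity of $[\cdot,\cdot]_\g$.

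For the equivariance, I would simply compute both sides of $E[x,y]_E=[x,Ey]_E$: applying \eqref{equivariant-Nijenhuis} twice gives $E[x,y]_E=E[Ex,y]_\g=[Ex,Ey]_\g$, while the definition of $[\cdot,\cdot]_E$ gives $[x,Ey]_E=[Ex,Ey]_\g$; the two expressions coincide. Hence $E$ commutes with the adjoint representation of $(\g,[\cdot,\cdot]_E)$, so $E$ is an equivariant Nijenhuis operator for $[\cdot,\cdot]_E$, completing the proof.

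I expect no genuine obstacle here: every step is a single application of the equivariance identity. The only point worth flagging is that one should recognize $[\cdot,\cdot]_E$ as the specialization of the $N$-deformed bracket, so as to reuse the classical fact that it is a Lie bracket; after that, the statement amounts to the observation that the \ENL condition reproduces itself under the deformation $[\cdot,\cdot]_\g\mapsto[\cdot,\cdot]_E$.
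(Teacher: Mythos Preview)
Your proof is correct and follows essentially the same approach as the paper: the paper's own argument is the single line $E[x,y]_{E}=E[Ex,y]_{\g}=[E^{2}x,y]_{\g}=[Ex,y]_{E}$, relying implicitly on the preceding discussion (Lemma~\ref{lem:equiv-implies-nij} and \eqref{E-deformed-bracket}) for the fact that $[\cdot,\cdot]_E$ is a Lie bracket. Your version is slightly more explicit about this Lie-bracket step and computes the equivariance via $[Ex,Ey]_{\g}$ instead of $[E^2x,y]_{\g}$, but these are trivially equivalent.
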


\begin{proof}
	We have
	\[
	E[x,y]_{E}=E[Ex,y]_{\g}=[E^{2}x,y]_{\g}=[Ex,y]_{E},
	\]
which implies that $E$ is an equivariant Nijenhuis operators on $[\cdot,\cdot]_{E}$.
\end{proof}

\begin{cor}
	Every \ENL algebra $(\g,[\cdot,\cdot]_{\g},E)$ gives rise to a hierarchy of
	\ENL algebras
	\[
	(\g,[\cdot,\cdot]_{E^{k}},E^{k}),
	\qquad k\in\mathbb Z_{>0}.
	\]
\end{cor}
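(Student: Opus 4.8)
The plan is to establish the hierarchy by induction on $k$, using Proposition~\ref{induce-Lie-algebra} as the inductive engine. The base case $k=1$ is trivial: $(\g,[\cdot,\cdot]_{E},E)$ is an \ENL algebra by Proposition~\ref{induce-Lie-algebra} (and $k=0$ recovers the original structure). For the inductive step, suppose $(\g,[\cdot,\cdot]_{E^{k}},E^{k})$ is an \ENL algebra. One would like to simply apply Proposition~\ref{induce-Lie-algebra} to this \ENL algebra to conclude that $(\g,[\cdot,\cdot]_{(E^{k})'},E^{k})$ is again an \ENL algebra, where $[\cdot,\cdot]_{(E^{k})'}$ denotes the deformation of $[\cdot,\cdot]_{E^{k}}$ by the operator $E^{k}$ as in \eqref{E-deformed-bracket}. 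The remaining task is then to identify this iterated deformation with $[\cdot,\cdot]_{E^{2k}}$ and the operator with $E^{2k}$.

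The key computation is the deformation-composition identity: for $x,y\in\g$,
\[
[x,y]_{(E^{k})'} = [E^{k}x,y]_{E^{k}} = [E^{k}(E^{k}x),y]_{\g} = [E^{2k}x,y]_{\g} = [x,y]_{E^{2k}},
\]
where the second equality uses the definition of $[\cdot,\cdot]_{E^{k}}$ applied to the pair $(E^{k}x,y)$. This shows the bracket matches. For the operator, one must check that $E^{k}$ being equivariant for $[\cdot,\cdot]_{E^{k}}$ actually gives the \emph{full} power $E^{2k}$ the equivariance property — but in fact it is cleaner to observe directly that any power $E^{j}$ is an equivariant Nijenhuis operator for $[\cdot,\cdot]_{E^{m}}$ whenever $E$ is equivariant for $[\cdot,\cdot]_{\g}$: indeed $E^{j}[x,y]_{E^{m}} = E^{j}[E^{m}x,y]_{\g} = [E^{j+m}x,y]_{\g} = [E^{j}x,y]_{E^{m}}$, using \eqref{equivariant-Nijenhuis} repeatedly to pull $E^{j}$ past the bracket onto the first slot. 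Taking $j=2k$ and $m=2k$ (or $j=k$, $m=k$ together with the bracket identity above) yields that $E^{2k}$ is an equivariant Nijenhuis operator on $(\g,[\cdot,\cdot]_{E^{2k}})$.

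Finally, one should record why $[\cdot,\cdot]_{E^{2k}}$ is a Lie bracket at all: this follows because $E^{2k}$ is an equivariant Nijenhuis operator on $(\g,[\cdot,\cdot]_\g)$ by the displayed computation with $m=0$, hence a Nijenhuis operator by Lemma~\ref{lem:equiv-implies-nij}, so its deformed bracket $[\cdot,\cdot]_{E^{2k}}$ is a Lie bracket by the general Nijenhuis deformation fact recalled before Definition~1.1; alternatively it is $[\cdot,\cdot]_{(E^{k})'}$ for the \ENL algebra $(\g,[\cdot,\cdot]_{E^k},E^k)$ and Proposition~\ref{induce-Lie-algebra} already guarantees it. Either route closes the induction. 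I do not anticipate a genuine obstacle here — the entire content is the bookkeeping identity $[\,[x,y]_{E^{a}}\,]$-composition $\Leftrightarrow E^{a+b}$, and the only point requiring a moment's care is being explicit that powers of an equivariant operator remain equivariant under all the deformed brackets, so that the induction hypothesis is stated in a form strong enough to feed itself.
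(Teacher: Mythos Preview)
Your direct computation $E^{j}[x,y]_{E^{m}}=[E^{j}x,y]_{E^{m}}$ is correct and is essentially how the paper intends the corollary to follow from Proposition~\ref{induce-Lie-algebra}: iterating \eqref{equivariant-Nijenhuis} gives $E^{k}[x,y]_{\g}=[E^{k}x,y]_{\g}$, so $(\g,[\cdot,\cdot]_{\g},E^{k})$ is itself an \ENL algebra for each $k$, and then a single application of Proposition~\ref{induce-Lie-algebra} with the operator $E^{k}$ yields $(\g,[\cdot,\cdot]_{E^{k}},E^{k})$.

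The one structural issue is your inductive scaffolding: the step you actually carry out passes from $k$ to $2k$ (apply Proposition~\ref{induce-Lie-algebra} to $(\g,[\cdot,\cdot]_{E^{k}},E^{k})$ and identify the output with level $2k$), so as an induction it would only reach $k\in\{1,2,4,8,\dots\}$, not all positive integers. This is harmless in the end, since your direct argument already handles arbitrary $k$ without any induction; you should simply drop the inductive wrapper and present that one-line argument on its own.
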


\begin{rmk}\label{equiv-ad}
	Let $(\g,[\cdot,\cdot]_{\g},E)$ be an \ENL algebra.
	Since
	$\underline{\ad}_{\,x}=[\ad_x,E]=0$ for all $x\in\g$,
	  the deformed adjoint representation associated with the
	$E$-deformed bracket $[\cdot,\cdot]_E$ reduces to
	\begin{equation}\label{adn-equiv}
		\ad^{E}_{x}
		=\ad_{Ex}
		=\ad_x\circ E
		=E\circ\ad_x,
		\qquad x\in\g.
	\end{equation}
\end{rmk}

\begin{pro}\label{E-inverse}
	Let $E:\g\to\g$ be an invertible equivariant Nijenhuis operator.
	Then its inverse $E^{-1}:\g\to\g$ is again an equivariant Nijenhuis
	operator.
\end{pro}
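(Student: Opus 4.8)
The plan is to start from the defining equivariance identity $E\circ\ad_x=\ad_x\circ E$ for all $x\in\g$, which holds by hypothesis, and simply conjugate it by $E^{-1}$. Since $E$ is assumed invertible, left- and right-multiplying the identity $E\,\ad_x=\ad_x\,E$ by $E^{-1}$ yields $\ad_x\,E^{-1}=E^{-1}\,\ad_x$ for every $x\in\g$, which is precisely the statement that $E^{-1}$ commutes with the adjoint representation, i.e.\ $E^{-1}$ is an equivariant Nijenhuis operator on $(\g,[\cdot,\cdot]_\g)$. Concretely: from $E\,\ad_x = \ad_x\,E$, multiply on the left by $E^{-1}$ to get $\ad_x = E^{-1}\,\ad_x\,E$, then multiply on the right by $E^{-1}$ to get $\ad_x\,E^{-1} = E^{-1}\,\ad_x$.

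One should also note that the fact that $E^{-1}$ is genuinely a Nijenhuis operator (i.e.\ that its torsion vanishes) then follows immediately from Lemma~\ref{lem:equiv-implies-nij}, so there is nothing further to check: once $\underline{\ad}_{\,x}=[\ad_x,E^{-1}]=0$ is established, the Nijenhuis condition \eqref{N-ad} is automatic. It may be worth remarking, for the reader, that in terms of the bracket this says $E^{-1}[x,y]_\g=[x,E^{-1}y]_\g$, which can be seen directly by substituting $y\mapsto E^{-1}y$ in $E[x,E^{-1}y]_\g = [x,y]_\g$ (the latter being \eqref{equivariant-Nijenhuis} applied to $E^{-1}y$) and applying $E^{-1}$ to both sides.

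There is essentially no obstacle here: the entire content is that invertibility allows the commutation relation to be transported to the inverse, a one-line manipulation. The only thing to be careful about is the logical order — one must use the equivariance characterization \eqref{equivariant-Nijenhuis} rather than the raw Nijenhuis torsion identity \eqref{Nijenhuis-operator}, since inverting a Nijenhuis operator is not automatic in general (the analogous statement for ordinary Nijenhuis operators requires an argument), whereas here the rigidity of the equivariance condition makes it trivial. So the proof I would write is just the two-line conjugation computation above, optionally followed by the sentence invoking Lemma~\ref{lem:equiv-implies-nij}.
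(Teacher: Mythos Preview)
Your proof is correct and takes essentially the same approach as the paper: both use invertibility to transport the equivariance identity to $E^{-1}$, you by conjugating the operator identity $E\,\ad_x=\ad_x\,E$ by $E^{-1}$, the paper by the equivalent element-level substitution $E^{-1}[x,y]_\g = E^{-1}[E(E^{-1}x),E(E^{-1}y)]_\g = [x,E^{-1}y]_\g$. Your remark that this relies on the equivariance characterization rather than the raw Nijenhuis condition is a useful observation.
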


\begin{proof}
	For all $x,y\in\g$, using the equivariance of $E$ and its invertibility,
	we compute
	\[
	E^{-1}[x,y]_{\g}
	=
	E^{-1}\big([E(E^{-1}x),E(E^{-1}y)]_{\g}\big)
	=
	[x,E^{-1}y]_{\g}\,.
	\]
Thus, $E^{-1}$ commutes with the adjoint action, and hence is an
equivariant Nijenhuis operator.
\end{proof}
Another class of endomorphisms that interacts naturally with operator-based
deformations of Lie brackets is given by \emph{averaging operators}
\cite{Aguiar}.  Although averaging operators are not, in general, Nijenhuis
torsion-free, they provide a classical mechanism for producing new algebraic
brackets from a Lie algebra via a linear endomorphism.  We recall this notion
because, in the invertible case, it coincides with the \ENL condition, and it
also serves as a useful point of comparison for the double-type constructions
that appear later in the \ENL setting..
\smallskip

A linear map $P:\g\to\g$ is called a \emph{left averaging operator} if it satisfies
\begin{equation}\label{averaging-operator}
	[Px,Py]_{\g} = P([Px,y]_{\g}),
	\qquad \forall\,x,y\in\g.
\end{equation}
Since no torsion-free condition is imposed on $P$, the resulting deformation is
not of Nijenhuis type.  Instead, one considers the bilinear operation
\[
[x,y]_{P} := [Px,y]_{\g},
\]
which, under condition \eqref{averaging-operator}, equips $\g$ with a natural
left Leibniz algebra structure (typically non-skew-symmetric), denoted by
$\g_{P}$.

\begin{pro}\label{ENL-averaging}
	Let $(\g,[\cdot,\cdot]_{\g},E)$ be an \ENL algebra.
	Then $E$ is an averaging operator.
	Conversely, if an averaging operator $E:\g\to\g$ is invertible, then it defines an \ENL algebra
	$(\g,[\cdot,\cdot]_{\g},E)$.
\end{pro}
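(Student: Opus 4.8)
The plan is to verify the two implications directly from the definitions. For the forward direction, I start from an \ENL algebra $(\g,[\cdot,\cdot]_\g,E)$. The defining equivariance condition \eqref{equivariant-Nijenhuis} gives $E[x,y]_\g=[Ex,y]_\g$ for all $x,y\in\g$. I then compute the averaging condition \eqref{averaging-operator}: starting from $[Ex,Ey]_\g$, I use equivariance in the second slot of the bracket, namely $[Ex,Ey]_\g=E[Ex,y]_\g$ (applying $E[u,v]_\g=[u,Ev]_\g$ with $u=Ex$, $v=y$, read backwards). This immediately yields $[Ex,Ey]_\g=E\big([Ex,y]_\g\big)$, which is exactly \eqref{averaging-operator} with $P=E$. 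So every equivariant Nijenhuis operator is automatically an averaging operator; this direction is a one-line computation.

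For the converse, I assume $E:\g\to\g$ is an invertible averaging operator, so $[Ex,Ey]_\g=E([Ex,y]_\g)$ for all $x,y\in\g$. I want to deduce the equivariance identity $E\circ\ad_x=\ad_x\circ E$, equivalently $E[x,y]_\g=[x,Ey]_\g$ for all $x,y$. Since $E$ is invertible, every element of $\g$ can be written as $Ex$ for some $x$, so I may substitute $x\mapsto E^{-1}x$ in the averaging identity, obtaining $[x,Ey]_\g=E([x,y]_\g)$ for all $x,y\in\g$. This is precisely \eqref{equivariant-Nijenhuis}, so $(\g,[\cdot,\cdot]_\g,E)$ is an \ENL algebra. (By Lemma~\ref{lem:equiv-implies-nij}, $E$ is then in particular Nijenhuis, though that is not needed for the statement.)

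The only subtlety worth flagging is the direction of substitution: the averaging condition is genuinely asymmetric (it only constrains $E$ applied to the first bracket slot), and invertibility is exactly what is needed to "free up" that slot and turn the one-sided averaging identity into the two-sided equivariance identity. Without invertibility the converse fails in general, since an averaging operator need not even be Nijenhuis-torsion-free, as noted just before the statement. So there is no real obstacle here — the proof is a short direct computation in each direction, with the surjectivity of $E$ being the one ingredient that must be used explicitly in the converse.
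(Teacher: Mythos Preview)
Your proof is correct and follows essentially the same approach as the paper. The only difference is that for the converse the paper first shows $E^{-1}$ is equivariant (via the substitution $u=Ex$, $v=Ey$) and then invokes Proposition~\ref{E-inverse}, whereas your substitution $x\mapsto E^{-1}x$ yields the equivariance of $E$ directly and avoids that extra citation.
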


\begin{proof}
	If $(\g,[\cdot,\cdot]_{\g},E)$ is an \ENL algebra, then equivariance gives	$
	[Ex,Ey]_{\g} = E[Ex,y]_{\g},
	$
	so $E$ satisfies \eqref{averaging-operator}.

	Conversely, let $E$ be an invertible averaging operator and set
	$u=Ex$, $v=Ey$.  Then by \eqref{averaging-operator},
	\[
	E^{-1}[u,v]_{\g}
	=
	E^{-1}[Ex,Ey]_{\g}
	=
	[Ex,y]_{\g}
	=
	[u,E^{-1}v]_{\g},
	\]
	which shows that $E^{-1}$ commutes with the adjoint action.
	By Proposition~\ref{E-inverse}, $E$ is therefore an equivariant Nijenhuis
	operator, and hence $(\g,[\cdot,\cdot]_{\g},E)$ is an \ENL algebra.
\end{proof}
\begin{defi}
	Let $(\g,[\cdot,\cdot]_{\g},E)$ and $(\g',[\cdot,\cdot]_{\g'},E')$ be two \ENL
	algebras.
	An \emph{\ENE-homomorphism} from $(\g,E)$ to $(\g',E')$ is a Lie algebra
	homomorphism $\phi:\g\to\g'$ satisfying
	\[
	\phi\circ E = E'\circ \phi.
	\]
	If $\phi$ is bijective, it is called an \emph{\ENE-isomorphism}.
\end{defi}

\smallskip

We next recall the notion of a representation compatible with a Nijenhuis
operator, which will serve as a stepping stone toward the equivariant case.

\begin{defi}
	Let $(\g,[\cdot,\cdot]_{\g},N)$ be a Nijenhuis Lie algebra.
	An \emph{$N$-representation} of $\g$ consists of a representation
	$(W;\rho)$ of the Lie algebra $(\g,[\cdot,\cdot]_{\g})$ together with a
	linear map $T:W\to W$ such that for all $x\in\g$ and $u\in W$ the
	compatibility condition
	\begin{equation}\label{E-rep}
		T^{2}(\rho(x)u)
		+ \rho(Nx)(Tu)
		- T(\rho(Nx)u)
		- T(\rho(x)(Tu))
		= 0
	\end{equation}
	is satisfied.
	In this case we say that $T$ is \emph{compatible} with $N$.
	The identity~\eqref{E-rep} can be interpreted as a balance condition
	between the actions of $T$, $\rho(x)$, and $\rho(Nx)$, and is illustrated
	by the diagram in Figure~\ref{d1}.
\end{defi}
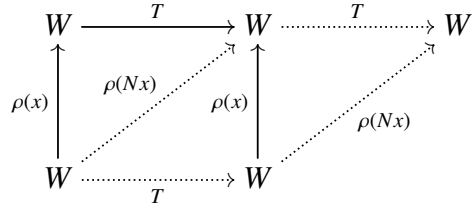
\begin{figure}[h]
	\centering
	\subfloat{
		\xymatrix@C=2cm@R=1.5cm{
			W \ar[r]^{T} & W \ar@{.>}[r]^{T} & W \\
			W \ar[u]^{\rho(x)}
			\ar@{.>}[r]_{T}
			\ar@{.>}[ur]^{\rho(Nx)}
			&
			W \ar[u]^{\rho(x)}
			\ar@{.>}[ur]_{\rho(Nx)}
	}}
	\caption{
		The compatibility condition \eqref{E-rep} for an $N$-representation.
		The identity expresses the vanishing of the signed sum of all compositions
		of the maps $T$, $\rho(x)$, and $\rho(Nx)$ along paths from the lower-left
		copy of $W$ to the upper-right one.
		The dotted arrows indicate the weaker intertwining relations associated
		with an averaging operator; see Remark~\ref{aver-com}.
	}
	\label{d1}
\end{figure}
\begin{defi}
	An \emph{\ENE-representation} of an \ENL algebra
	$(\g,[\cdot,\cdot]_{\g},E)$ consists of a representation $(W;\rho)$ of the
	Lie algebra $(\g,[\cdot,\cdot]_{\g})$ together with a linear map
	$T:W\to W$ such that
	\begin{equation}\label{representation-equivariant-Liealg}
		T\big(\rho(x)u\big)
		=
		\rho(Ex)\,u
		=
		\rho(x)\big(Tu\big),
		\qquad \forall\,x\in\g,\; u\in W.
	\end{equation}
	In this case we say that $T$ is \emph{equivariant} with respect to $E$.
	Equivalently, all subdiagrams of Figure~\ref{d1}, as well as the full
	diagram with $N:=E$, are commutative.
	We denote such a representation by $(W;T,\rho)$.
\end{defi}

\begin{rmk}\label{aver-com}
	An averaging operator $P:\g\to\g$ is, in general, not Nijenhuis
	torsion-free.
	Given a representation $(W;\rho)$ of the Lie algebra $\g$ and a linear map
	$T:W\to W$, a natural compatibility condition between $\rho$ and $P$ is
	\[
	T(\rho(Px)u) = \rho(Px)(Tu),
	\qquad \forall\,x\in\g,\; u\in W.
	\]
	This condition is equivalent to the commutativity of the subdiagram
	indicated by the dotted arrows in Figure~\ref{d1}.
\end{rmk}

\begin{cor}\label{adjoint-representation-der}
	Let $(\g,[\cdot,\cdot]_{\g},E)$ be an \ENL algebra.
	Then the triple $(\g;E,\ad)$ defines an \ENE-representation,
	called the \emph{adjoint \ENE-representation}.
\end{cor}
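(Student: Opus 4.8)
The plan is to verify directly that the triple $(\g;E,\ad)$ satisfies the defining conditions of an \ENE-representation, namely that $(\g;\ad)$ is a representation of the Lie algebra $(\g,[\cdot,\cdot]_\g)$ and that $E$ is equivariant with respect to it in the sense of \eqref{representation-equivariant-Liealg}, with $W=\g$, $\rho=\ad$ and $T=E$. The first part is classical: $\ad$ is the adjoint representation of $\g$ on itself, so $(\g;\ad)$ is a representation by the Jacobi identity. It therefore remains to check the two equalities
\[
E\big(\ad_x y\big) = \ad_{Ex} y = \ad_x\big(E y\big),\qquad \forall\, x,y\in\g.
\]

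Both of these are immediate consequences of the equivariance condition \eqref{equivariant-Nijenhuis}. Indeed, unravelling $\ad$, the middle term is $\ad_{Ex}y=[Ex,y]_\g$ and the right-hand term is $\ad_x(Ey)=[x,Ey]_\g$; by \eqref{equivariant-Nijenhuis} (in both of its equivalent forms $E[x,y]_\g=[x,Ey]_\g=[Ex,y]_\g$) each of these equals $E[x,y]_\g=E(\ad_x y)$, which is the left-hand term. So all three expressions coincide. Alternatively, one can phrase this as: the condition $\underline{\ad}_{\,x}=[\ad_x,E]=0$ says precisely that $E$ commutes with every operator $\ad_x$, which is exactly the equivariance required of $T=E$ in \eqref{representation-equivariant-Liealg}.

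There is essentially no obstacle here; the corollary is a direct specialization of the definition of \ENE-representation to the adjoint case, with the equivariance hypothesis on $E$ supplying exactly the needed intertwining identity. The only point worth a sentence is to note that $(\g;\ad)$ being a genuine representation uses the Jacobi identity for $[\cdot,\cdot]_\g$, and that no use is made of the Nijenhuis torsion vanishing beyond what is already packaged in \eqref{equivariant-Nijenhuis} (cf.\ Lemma~\ref{lem:equiv-implies-nij}). Thus the proof reduces to citing \eqref{equivariant-Nijenhuis} and observing that it is the $\rho=\ad$, $T=E$ instance of \eqref{representation-equivariant-Liealg}.
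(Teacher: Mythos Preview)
Your proof is correct and follows exactly the approach implicit in the paper, which states the corollary without proof since it is an immediate specialization of the definition of an \ENE-representation to $(W;T,\rho)=(\g;E,\ad)$, with \eqref{equivariant-Nijenhuis} providing precisely the required intertwining identities.
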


\begin{pro}\label{semidirect-ENL}
	Let $(W;T,\rho)$ be an \ENE-representation of an \ENL algebra
	$(\g,[\cdot,\cdot]_{\g},E)$.
	Then the direct sum $\g\oplus W$, equipped with the semidirect product
	bracket
	\[
	[x+u,\,y+v]_{\ltimes}
	=
	[x,y]_{\g}+\rho(x)v-\rho(y)u,
	\qquad \forall x,y\in\g,\; u,v\in W,
	\]
	and the linear map
	\[
	\widehat{E}(x+u):=Ex+Tu,
	\]
	forms an \ENL algebra.
\end{pro}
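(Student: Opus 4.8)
The plan is to verify two things: first, that $\widehat{E}$ is an equivariant Nijenhuis operator on the semidirect product Lie algebra $(\g\oplus W,[\cdot,\cdot]_\ltimes)$, and second—implicitly needed—that $[\cdot,\cdot]_\ltimes$ is indeed a Lie bracket, though the latter is the standard semidirect product construction and only uses that $(W;\rho)$ is a representation of $(\g,[\cdot,\cdot]_\g)$, so I would simply cite it. The real content is the equivariance condition $\widehat{E}[\xi,\eta]_\ltimes=[\widehat{E}\xi,\eta]_\ltimes$ for all $\xi,\eta\in\g\oplus W$, since by Lemma~\ref{lem:equiv-implies-nij} equivariance automatically implies the Nijenhuis property, so there is no separate torsion computation to do.

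First I would write $\xi=x+u$, $\eta=y+v$ and expand both sides. On one side,
\[
\widehat{E}[x+u,y+v]_\ltimes=\widehat{E}\big([x,y]_\g+\rho(x)v-\rho(y)u\big)=E[x,y]_\g+T\big(\rho(x)v\big)-T\big(\rho(y)u\big).
\]
On the other side,
\[
[\widehat{E}(x+u),\,y+v]_\ltimes=[Ex+Tu,\,y+v]_\ltimes=[Ex,y]_\g+\rho(Ex)v-\rho(y)(Tu).
\]
Now I compare term by term. The $\g$-components match because $E$ is equivariant on $\g$: $E[x,y]_\g=[Ex,y]_\g$. For the $W$-components, the term $T(\rho(x)v)$ equals $\rho(Ex)v$ by the \ENE-representation identity \eqref{representation-equivariant-Liealg}, and the term $-T(\rho(y)u)$ equals $-\rho(y)(Tu)$ by the other half of the same identity \eqref{representation-equivariant-Liealg} (namely $T(\rho(y)u)=\rho(y)(Tu)$). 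Hence the two sides coincide, which is exactly the equivariance of $\widehat{E}$.

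Having established equivariance, Lemma~\ref{lem:equiv-implies-nij} gives that $\widehat{E}$ is a Nijenhuis operator on $(\g\oplus W,[\cdot,\cdot]_\ltimes)$, so $(\g\oplus W,[\cdot,\cdot]_\ltimes,\widehat{E})$ is an \ENL algebra as claimed. I do not anticipate a genuine obstacle here: the proposition is essentially bookkeeping, and the only point requiring a little care is recognizing that the three-way identity \eqref{representation-equivariant-Liealg} is used in two different readings—once as $T\rho(x)=\rho(Ex)$ and once as $T\rho(x)=\rho(x)T$—to handle the two asymmetrically-placed $W$-terms in the semidirect bracket. A secondary minor point is making sure one does not need the full Nijenhuis compatibility \eqref{E-rep} of $T$ with $E$; indeed the stronger equivariance \eqref{representation-equivariant-Liealg} makes that automatic and is all that is invoked.
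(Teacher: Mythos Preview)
Your proposal is correct and follows essentially the same approach as the paper: both note that the semidirect product bracket is the standard one, then verify the equivariance identity $\widehat{E}[x+u,y+v]_\ltimes=[\widehat{E}(x+u),y+v]_\ltimes$ by expanding both sides and matching the $\g$-component via equivariance of $E$ and the $W$-components via the two readings of \eqref{representation-equivariant-Liealg}. Your explicit invocation of Lemma~\ref{lem:equiv-implies-nij} is harmless but not strictly needed, since the definition of an \ENL algebra requires only the equivariance condition.
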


\begin{proof}
	The bracket $[\cdot,\cdot]_{\ltimes}$ is the standard semidirect product
	bracket and hence defines a Lie algebra structure on $\g\oplus W$.
	It remains to verify that $\widehat{E}$ is equivariant with respect to
	$[\cdot,\cdot]_{\ltimes}$.
	For $x,y\in\g$ and $u,v\in W$, we compute
	\[
	\widehat{E}[x+u,\,y+v]_{\ltimes}
	=
	E[x,y]_{\g}+T\big(\rho(x)v-\rho(y)u\big),
	\]
	while, using the \ENE-representation condition
	\eqref{representation-equivariant-Liealg},
	\[
	[\widehat{E}(x+u),\,y+v]_{\ltimes}
	=
	[Ex,y]_{\g}+\rho(Ex)v-\rho(y)(Tu).
	\]
	Since $E$ is equivariant on $\g$ and $T(\rho(x)u)=\rho(Ex)u$, the two expressions
	coincide, and therefore
	\[
	\widehat{E}[x+u,\,y+v]_{\ltimes}
	=
	[\widehat{E}(x+u),\,y+v]_{\ltimes}.
	\]
	Hence $\widehat{E}$ is an equivariant Nijenhuis operator on
	$\g\oplus W$, and the conclusion follows.
\end{proof}

The semidirect \ENL algebra constructed in Proposition~\ref{semidirect-ENL}
will serve as a basic building block for matched pairs of \ENL algebras in the
next section.
\medskip

We next introduce the \emph{dual \ENE-representation} associated with an
\ENE-representation.
Let $(W;T,\rho)$ be an \ENE-representation of an \ENL algebra
$(\g,[\cdot,\cdot]_{\g},E)$.

Define the dual action $\rho^{*}:\g\to\gl(W^{*})$ by
\[
\langle \rho^{*}(x)\xi,\,u\rangle
=
-\langle \xi,\,\rho(x)u\rangle,
\qquad
\forall\,x\in\g,\; u\in W,\; \xi\in W^{*}.
\]
It is well known that $\rho^{*}$ is a representation of $\g$ on $W^{*}$.
Let $T^{*}:W^{*}\to W^{*}$ denote the linear map dual to $T$, defined by
\begin{equation}\label{dual-Reynolds-operator}
	\langle T^{*}\xi,\,u\rangle
	=
	\langle \xi,\,Tu\rangle,
	\qquad \forall\,u\in W,\; \xi\in W^{*}.
\end{equation}

\begin{pro}\label{dual-representation}
	The triple $(W^{*};T^{*},\rho^{*})$ is an \ENE-representation of the \ENL
	algebra $(\g,[\cdot,\cdot]_{\g},E)$.
	It is called the \emph{dual \ENE-representation}.
\end{pro}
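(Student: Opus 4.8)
The plan is to verify the two defining conditions of an \ENE-representation for the triple $(W^{*};T^{*},\rho^{*})$, namely the two equalities packaged in~\eqref{representation-equivariant-Liealg}:
\[
T^{*}\big(\rho^{*}(x)\xi\big)=\rho^{*}(Ex)\,\xi=\rho^{*}(x)\big(T^{*}\xi\big),
\qquad \forall\,x\in\g,\;\xi\in W^{*}.
\]
The underlying fact that $\rho^{*}$ is a representation of the Lie algebra $(\g,[\cdot,\cdot]_\g)$ is standard and may be invoked directly, so the only real content is the compatibility between $T^{*}$ and $\rho^{*}$. The natural strategy is to test each claimed identity against an arbitrary $u\in W$ using the pairing $\langle\cdot,\cdot\rangle$, move the operators across to the $W$-side via the defining relations~\eqref{dual-Reynolds-operator} for $T^{*}$ and the definition of $\rho^{*}$, and then apply the \ENE-representation condition~\eqref{representation-equivariant-Liealg} for $(W;T,\rho)$ itself.

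Concretely, first I would prove $\langle T^{*}(\rho^{*}(x)\xi),u\rangle=\langle\rho^{*}(Ex)\xi,u\rangle$. Expanding the left side, $\langle T^{*}(\rho^{*}(x)\xi),u\rangle=\langle\rho^{*}(x)\xi,Tu\rangle=-\langle\xi,\rho(x)(Tu)\rangle$; expanding the right side, $\langle\rho^{*}(Ex)\xi,u\rangle=-\langle\xi,\rho(Ex)u\rangle$. These agree precisely because $\rho(x)(Tu)=\rho(Ex)u$ by~\eqref{representation-equivariant-Liealg}. Next I would prove $\langle\rho^{*}(Ex)\xi,u\rangle=\langle\rho^{*}(x)(T^{*}\xi),u\rangle$: the left side is $-\langle\xi,\rho(Ex)u\rangle$ as above, while the right side is $-\langle T^{*}\xi,\rho(x)u\rangle=-\langle\xi,T(\rho(x)u)\rangle$, and these coincide because $\rho(Ex)u=T(\rho(x)u)$, again by~\eqref{representation-equivariant-Liealg}. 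Since $u\in W$ is arbitrary and the pairing is nondegenerate (all spaces being finite-dimensional), the two operator identities follow, establishing that $(W^{*};T^{*},\rho^{*})$ satisfies~\eqref{representation-equivariant-Liealg}.

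There is essentially no obstacle here: the argument is a routine dualization, and the only points requiring any care are the sign bookkeeping in the definition of $\rho^{*}$ and making sure the three-term chain of equalities in~\eqref{representation-equivariant-Liealg} is used in both directions (once as $\rho(x)T=\rho(Ex)$ and once as $T\rho(x)=\rho(Ex)$). One could also note, as a sanity check, that $E^{*}$ is automatically an equivariant Nijenhuis operator on $\g^{*}$ is not needed for this statement — the \ENL algebra here is $(\g,E)$, not its dual — but it does not enter the proof. The conclusion is immediate once both displayed identities are checked against a general $u\in W$.
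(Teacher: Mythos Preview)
Your proof is correct and follows essentially the same approach as the paper: both verify the two equivariance identities by pairing with an arbitrary $u\in W$, moving $T^{*}$ and $\rho^{*}$ across to the $W$-side, and invoking \eqref{representation-equivariant-Liealg} for $(W;T,\rho)$. The paper phrases each step as showing that the difference of the two sides pairs to zero with every $u$, but this is only a cosmetic variation of your argument.
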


\begin{proof}
	Since $\rho^{*}$ is a representation of $\g$, it suffices to verify the
	\ENE-equivariance condition.
	For $x\in\g$, $u\in W$ and $\xi\in W^{*}$, using the definitions
of $\rho^{*}$ and $T^{*}$ together with
	\eqref{representation-equivariant-Liealg}, we compute
	\[
	\big\langle T^{*}(\rho^{*}(x)\xi)-\rho^{*}(Ex)\xi,\,u\big\rangle
	=
	\langle \xi,\,-\rho(x)(Tu)+\rho(Ex)u\rangle
	=0,
	\]
	and similarly,
	\[
	\big\langle \rho^{*}(Ex)\xi-\rho^{*}(x)(T^{*}\xi),\,u\big\rangle
	=
	\langle \xi,\,-\rho(Ex)u+T(\rho(x)u)\rangle
	=0.
	\]
	Since the pairing with every $u\in W$ vanishes, both identities hold in
	$W^{*}$, and $(W^{*};T^{*},\rho^{*})$ is an \ENE-representation.
\end{proof}
Combining Corollary~\ref{adjoint-representation-der} with
Proposition~\ref{dual-representation}, we obtain the following canonical
example.

\begin{cor}\label{thm:coadjoint}
	Let $(\g,[\cdot,\cdot]_{\g},E)$ be an \ENL algebra.
	Then $(\g^{*};E^{*},\ad^{*})$ is an \ENE-representation of
	$(\g,[\cdot,\cdot]_{\g},E)$, called the \emph{coadjoint \ENE-representation}.
\end{cor}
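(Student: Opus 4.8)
The plan is to obtain this statement as an immediate specialization of the dualization procedure established in Proposition~\ref{dual-representation}. First I would invoke Corollary~\ref{adjoint-representation-der}, which asserts that $(\g;E,\ad)$ is an \ENE-representation of $(\g,[\cdot,\cdot]_{\g},E)$, where $\ad_x(y)=[x,y]_{\g}$. Then I would apply Proposition~\ref{dual-representation} to the \ENE-representation $(W;T,\rho)=(\g;E,\ad)$. This directly yields that $(\g^{*};E^{*},\ad^{*})$ is an \ENE-representation, where $\ad^{*}$ is the dual (coadjoint) action determined by $\langle\ad^{*}_{x}\xi,y\rangle=-\langle\xi,[x,y]_{\g}\rangle$ and $E^{*}$ is the operator dual to $E$ in the sense of \eqref{dual-Reynolds-operator}. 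This is precisely the asserted coadjoint \ENE-representation, so no further argument is required.

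For transparency I would also record the direct check of the equivariance identity \eqref{representation-equivariant-Liealg} in this case. For $x\in\g$, $\xi\in\g^{*}$ and $y\in\g$ one computes
\[
\langle E^{*}(\ad^{*}_{x}\xi),y\rangle=-\langle\xi,[x,Ey]_{\g}\rangle,\quad
\langle\ad^{*}_{Ex}\xi,y\rangle=-\langle\xi,[Ex,y]_{\g}\rangle,\quad
\langle\ad^{*}_{x}(E^{*}\xi),y\rangle=-\langle\xi,E[x,y]_{\g}\rangle.
\]
By equivariance of $E$, in the symmetric form $[x,Ey]_{\g}=[Ex,y]_{\g}=E[x,y]_{\g}$ recorded after the definition, these three pairings coincide for every $y\in\g$, hence $E^{*}\circ\ad^{*}_{x}=\ad^{*}_{Ex}=\ad^{*}_{x}\circ E^{*}$, which is exactly \eqref{representation-equivariant-Liealg} for the triple $(\g^{*};E^{*},\ad^{*})$.

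There is essentially no obstacle here: the content is already packaged in Corollary~\ref{adjoint-representation-der} and Proposition~\ref{dual-representation}. The only point requiring mild care is matching conventions---that the dual of the adjoint representation is exactly the coadjoint action $\ad^{*}$ normalized as above, and that the dual of $E$ is $E^{*}$ with the pairing convention \eqref{dual-Reynolds-operator}---after which the claim is immediate.
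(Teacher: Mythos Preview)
Your proposal is correct and follows exactly the paper's approach: the paper states this corollary immediately after the sentence ``Combining Corollary~\ref{adjoint-representation-der} with Proposition~\ref{dual-representation}, we obtain the following canonical example,'' with no further proof given. Your additional direct verification of the equivariance identity is a harmless elaboration of the same content.
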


\smallskip

We now introduce quadratic \ENL algebras, which provide the natural setting
for Manin triples in the equivariant Nijenhuis framework.
In this case, an invariant bilinear form identifies the adjoint and
coadjoint \ENE-representations.

Recall that a nondegenerate symmetric bilinear form
$S\in\otimes^{2}\g^{*}$ on a Lie algebra $\g$ is said to be
\emph{invariant} if
\begin{equation}\label{Invariant}
	S([x,y]_{\g},z) + S(y,[x,z]_{\g}) = 0,
	\qquad \forall\,x,y,z\in\g.
\end{equation}
A \emph{quadratic Lie algebra} is a pair $(\g,S)$ consisting of a Lie algebra
$\g$ and a nondegenerate symmetric invariant bilinear form $S$.

\begin{defi}\label{defi:qua}
	Let $(\g,[\cdot,\cdot]_{\g},E)$ be an \ENL algebra and
	$S\in\otimes^{2}\g^{*}$ be a nondegenerate symmetric bilinear form.
	The triple $(\g,E,S)$ is called a \emph{quadratic \ENL algebra} if
	$(\g,S)$ is a quadratic Lie algebra and the following compatibility
	condition is satisfied:
	\begin{equation}\label{Eder-manin}
		S(Ex,y) = S(x,Ey),
		\qquad \forall\,x,y\in\g.
	\end{equation}
\end{defi}

\begin{defi}
	Let $(W;T,\rho)$ and $(W';T',\rho')$ be two \ENE-representations of an
	\ENL algebra $(\g,[\cdot,\cdot]_{\g},E)$.
	A \emph{homomorphism} from $(W;T,\rho)$ to $(W';T',\rho')$ is a linear map
	$\phi:W\to W'$ such that
	\[
	\phi\circ\rho(x)=\rho'(x)\circ\phi,
	\qquad
	\phi\circ T = T'\circ\phi,
	\qquad \forall\,x\in\g.
	\]
\end{defi}

\begin{thm}\label{quadratic-Eder}
	Let $(\g,E,S)$ be a quadratic \ENL algebra.
	Then the linear map
	\[
	S^{\sharp}:\g\to\g^{*},
	\qquad
	\langle S^{\sharp}(x),y\rangle = S(x,y),
	\quad \forall\,x,y\in\g,
	\]
	is an isomorphism from the adjoint \ENE-representation $(\g;E,\ad)$ to the
	coadjoint \ENE-representation $(\g^{*};E^{*},\ad^{*})$.
\end{thm}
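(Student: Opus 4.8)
The plan is to show directly that $S^{\sharp}$ is a bijective linear map which simultaneously intertwines the Lie-algebra actions and the Nijenhuis operators on the two sides, so that it qualifies as an isomorphism of $\ENE$-representations in the sense of the homomorphism definition above. Bijectivity is immediate: since $S$ is nondegenerate and $\g$ is finite-dimensional, $S^{\sharp}\colon\g\to\g^{*}$ is injective and hence an isomorphism of vector spaces, with no computation needed. It then remains to check the two intertwining identities.

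For the action of $\g$, I would fix $x,y,z\in\g$ and pair against $z$: using the definition of $S^{\sharp}$, the invariance condition \eqref{Invariant}, and the definition of the coadjoint action, one computes
\[
\langle S^{\sharp}(\ad_{x}y),z\rangle
= S([x,y]_{\g},z)
= -\,S(y,[x,z]_{\g})
= -\,\langle S^{\sharp}y,\ad_{x}z\rangle
= \langle \ad^{*}_{x}(S^{\sharp}y),z\rangle .
\]
Since this holds for all $z\in\g$, we get $S^{\sharp}\circ\ad_{x}=\ad^{*}_{x}\circ S^{\sharp}$ for every $x\in\g$; this is exactly the classical statement that, for a quadratic Lie algebra, $S^{\sharp}$ identifies the adjoint and coadjoint representations. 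For the Nijenhuis operators, I would use the compatibility condition \eqref{Eder-manin}: for $x,y\in\g$,
\[
\langle S^{\sharp}(Ex),y\rangle
= S(Ex,y)
= S(x,Ey)
= \langle S^{\sharp}x,Ey\rangle
= \langle E^{*}(S^{\sharp}x),y\rangle ,
\]
where the last equality is the defining property \eqref{dual-Reynolds-operator} of the dual operator applied with $T=E$. Pairing against every $y$ gives $S^{\sharp}\circ E=E^{*}\circ S^{\sharp}$. Together with the previous identity, this shows $S^{\sharp}$ is a homomorphism of $\ENE$-representations from $(\g;E,\ad)$ to $(\g^{*};E^{*},\ad^{*})$, and being bijective it is an isomorphism.

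There is no genuine obstacle here: the theorem is essentially a repackaging of two facts, namely that invariance of $S$ is precisely what makes $S^{\sharp}$ equivariant for the $\g$-action, and that the symmetry condition \eqref{Eder-manin} is precisely what makes $S^{\sharp}$ equivariant for the operator. The only place that requires a little care is keeping the sign and duality conventions consistent — the minus sign in $\langle\ad^{*}_{x}\xi,z\rangle=-\langle\xi,\ad_{x}z\rangle$ and the identification of $E^{*}$ via the pairing \eqref{dual-Reynolds-operator} with $T=E$ — after which both intertwining relations reduce to a single line of pairing manipulations.
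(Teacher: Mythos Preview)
Your proof is correct and follows essentially the same approach as the paper: both verify the two intertwining identities $S^{\sharp}\circ\ad_{x}=\ad^{*}_{x}\circ S^{\sharp}$ and $S^{\sharp}\circ E=E^{*}\circ S^{\sharp}$ via pairing arguments using \eqref{Invariant} and \eqref{Eder-manin}, and then invoke nondegeneracy of $S$ for bijectivity. Your version is slightly more explicit in spelling out the pairing computation for the adjoint/coadjoint intertwining, but the structure and ideas are identical.
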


\begin{proof}
	The invariance of $S$ is equivalent to the intertwining relation
	\begin{equation}\label{quad-intertwine-ad}
		S^{\sharp}\circ\ad_x = \ad_x^{*}\circ S^{\sharp},
		\qquad \forall\,x\in\g.
	\end{equation}
	The compatibility condition \eqref{Eder-manin} further implies
	\[
	\langle S^{\sharp}(Ex),y\rangle
	=
	\langle S^{\sharp}(x),Ey\rangle
	=
	\langle E^{*}(S^{\sharp}(x)),y\rangle,
	\]
	and hence
	\begin{equation}\label{quad-intertwine-E}
		S^{\sharp}\circ E = E^{*}\circ S^{\sharp}.
	\end{equation}
	Equations \eqref{quad-intertwine-ad} and \eqref{quad-intertwine-E} show that
	$S^{\sharp}$ intertwines both the adjoint and the $E$-actions.
	Thus $S^{\sharp}$ is a morphism of \ENE-representations from
	$(\g;E,\ad)$ to $(\g^{*};E^{*},\ad^{*})$.
	Since $S$ is nondegenerate, $S^{\sharp}$ is an isomorphism.
\end{proof}

In a quadratic \ENL algebra $(\g,E,S)$, the identification
$S^{\sharp}:\g\to\g^{*}$ allows the coadjoint \ENE-representation to be regarded
as an intrinsic copy of the adjoint one.
This identification will play a central role in the construction of matched
pairs and Manin triples of \ENL algebras in the following section.
	
	\section{Matched pairs and Manin triples of \ENL algebras}\label{sec:mp}
	
	In this section we develop the \ENL analogues of matched pairs and Manin
	triples.  We show that the classical correspondence between bicrossed
	products and quadratic doubles extends naturally to \ENL algebras, yielding two equivalent descriptions of \ENL doubles.
	
	\smallskip
	We begin by recalling the notion of a matched pair of Lie algebras.
	\medskip
	
	A \emph{matched pair of Lie algebras} \cite{Majid,Takeuchi} consists of two Lie algebras
	$(\g,[\cdot,\cdot]_{\g})$ and $(\h,[\cdot,\cdot]_{\h})$, together with
	representations $\rho:\g\to\gl(\h)$ and $\mu:\h\to\gl(\g)$ such that
	\begin{align}
		\label{eq:mp1}
		\rho(x)[\xi,\eta]_{\h}
		&=
		[\rho(x)\xi,\eta]_{\h}
		+ [\xi,\rho(x)\eta]_{\h}
		+ \rho(\mu(\eta)x)\xi
		- \rho(\mu(\xi)x)\eta,\\[0.4em]
		\label{eq:mp2}
		\mu(\xi)[x,y]_{\g}
		&=
		[\mu(\xi)x,y]_{\g}
		+ [x,\mu(\xi)y]_{\g}
		+ \mu(\rho(y)\xi)x
		- \mu(\rho(x)\xi)y,
	\end{align}
	for all $x,y\in\g$ and $\xi,\eta\in\h$.
	Such a matched pair is denoted by $(\g,\h;\rho,\mu)$, or simply $(\g,\h)$.
	
	\smallskip
	
Matched pairs are also referred to as \emph{twilled Lie algebras}
\cite{KM} or \emph{double Lie algebras} \cite{Lu2}.
They admit an equivalent description in terms of bicrossed products:
it is well known \cite{Lu2,Majid} that a matched pair of Lie algebras
$(\g,\h;\rho,\mu)$ determines a Lie algebra structure on the direct sum
$\g\oplus\h$, called the \emph{bicrossed product} and denoted by
$\g\bowtie\h$. The corresponding Lie bracket is given by
\begin{equation}\label{mathched-pair-Lie}
	[x+\xi,\,y+\eta]_{\bowtie}
	=
	\big([x,y]_{\g}+\mu(\xi)y-\mu(\eta)x\big)
	+
	\big([\xi,\eta]_{\h}+\rho(x)\eta-\rho(y)\xi\big),
\end{equation}
for all $x,y\in\g$ and $\xi,\eta\in\h$.
Conversely, if $(\g\oplus\h,[\cdot,\cdot])$ is a Lie algebra such that
$\g$ and $\h$ are Lie subalgebras, then each mixed bracket $[x,\xi]$,
with $x\in\g$ and $\xi\in\h$, decomposes uniquely as
\[
[x,\xi]=\rho(x)\xi-\mu(\xi)x,
\]
and the resulting linear maps $\rho:\g\to\gl(\h)$ and
$\mu:\h\to\gl(\g)$ form a matched pair $(\g,\h;\rho,\mu)$, with the given
Lie bracket on $\g\oplus\h$ coinciding with the bicrossed product bracket
\eqref{mathched-pair-Lie}.
\smallskip

We now introduce the notion of a matched pair in the category of
\ENL algebras by imposing compatibility conditions
between the mutual actions and the equivariant Nijenhuis operators.

\begin{defi}\label{MPR}
	Let $(\g,[\cdot,\cdot]_{\g},E)$ and $(\h,[\cdot,\cdot]_{\h},\huaE)$ be two \ENL
	algebras. Suppose that there are linear maps $\rho:\g\rightarrow\gl(\h)$ and $\mu:\h\rightarrow\gl(\g)$ such that
 $(\h;\huaE,\rho)$ is an \ENE-representation of $(\g,E)$,  $(\g; E,\mu)$ is an \ENE-representation of $(\mathfrak h,\mathcal{E})$. If moreover
 $(\g,\h;\rho,\mu)$ is a matched pair of Lie algebras,
 then we call $((\g,E),(\h,\huaE))$ (or more precisely $((\g,E),(\h,\huaE);\rho,\mu)$)   a \emph{matched pair of \ENL algebras}.
\end{defi}

We have the following equivalent characterization.
\begin{pro}\label{equiv-con}
	Let $(\g,[\cdot,\cdot]_{\g},E)$ and $(\h,[\cdot,\cdot]_{\h},\huaE)$ be two \ENL
	algebras. Then $((\g,E),(\h,\huaE);\rho,\mu)$ is a matched pair of \ENL algebras if and only if $(\g,\h;\rho,\mu)$ is a matched pair of Lie algebras satisfying
\begin{eqnarray}
		\label{Equ-1}
		\huaE(\rho(x)\xi)
		=
		\rho(E x)\xi
		=
		\rho(x)\big(\huaE \xi\big), \\[0.4em]
		\label{Equ-2}
		E(\mu(\xi)x)
		=
		\mu(\huaE\xi)x
	    =
		\mu(\xi)\big(Ex\big),
\end{eqnarray}
	for all $x\in\g$ and $\xi\in\h$.
\end{pro}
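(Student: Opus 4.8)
The plan is to prove this by directly unwinding Definition~\ref{MPR} with the help of the defining identity~\eqref{representation-equivariant-Liealg} for \ENE-representations; essentially no computation is needed beyond bookkeeping of symbols. By hypothesis $(\g,[\cdot,\cdot]_{\g},E)$ and $(\h,[\cdot,\cdot]_{\h},\huaE)$ are \ENL algebras and $\rho:\g\to\gl(\h)$, $\mu:\h\to\gl(\g)$ are the given linear maps, so the only content of Definition~\ref{MPR} that must be matched against~\eqref{Equ-1}--\eqref{Equ-2} consists of the two \ENE-representation conditions together with the condition that $(\g,\h;\rho,\mu)$ be a matched pair of Lie algebras.

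For the forward implication I would argue as follows. If $((\g,E),(\h,\huaE);\rho,\mu)$ is a matched pair of \ENL algebras, then by Definition~\ref{MPR} the quadruple $(\g,\h;\rho,\mu)$ is in particular a matched pair of Lie algebras, which is the first half of the asserted characterization. Definition~\ref{MPR} further requires that $(\h;\huaE,\rho)$ be an \ENE-representation of $(\g,E)$; specializing the equivariance identity~\eqref{representation-equivariant-Liealg} to $W=\h$ and $T=\huaE$ gives exactly~\eqref{Equ-1}. Likewise, it requires that $(\g;E,\mu)$ be an \ENE-representation of $(\h,\huaE)$; specializing~\eqref{representation-equivariant-Liealg} to $W=\g$ and $T=E$, with the roles of the two \ENL algebras exchanged, gives exactly~\eqref{Equ-2}.

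For the converse, suppose $(\g,\h;\rho,\mu)$ is a matched pair of Lie algebras satisfying~\eqref{Equ-1} and~\eqref{Equ-2}. By the definition of a matched pair of Lie algebras, $\rho$ is a representation of $(\g,[\cdot,\cdot]_{\g})$ on $\h$ and $\mu$ is a representation of $(\h,[\cdot,\cdot]_{\h})$ on $\g$. Since~\eqref{Equ-1} is precisely the \ENE-equivariance condition~\eqref{representation-equivariant-Liealg} for the pair $(\rho,\huaE)$, the triple $(\h;\huaE,\rho)$ is an \ENE-representation of $(\g,E)$; and since~\eqref{Equ-2} is~\eqref{representation-equivariant-Liealg} for the pair $(\mu,E)$, the triple $(\g;E,\mu)$ is an \ENE-representation of $(\h,\huaE)$. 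All the requirements of Definition~\ref{MPR} are thereby met, so $((\g,E),(\h,\huaE);\rho,\mu)$ is a matched pair of \ENL algebras. The only point demanding any care is exactly this last piece of bookkeeping, namely checking that~\eqref{representation-equivariant-Liealg}, read on each of the two sides of the matched pair, coincides symbol-for-symbol with~\eqref{Equ-1} and~\eqref{Equ-2} respectively; there is no genuine obstacle.
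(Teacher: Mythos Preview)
Your proof is correct and proceeds exactly as one would expect: the proposition is a direct restatement of Definition~\ref{MPR} once the \ENE-representation condition~\eqref{representation-equivariant-Liealg} is specialized to the two sides of the matched pair, and you have carried out this bookkeeping accurately. The paper in fact states Proposition~\ref{equiv-con} without proof, treating it as an immediate unfolding of the definition, so your argument supplies precisely the routine verification the paper leaves implicit.
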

For a matched pair of Lie algebras, the direct sum $\g\oplus\h$ equipped
with the bicrossed-product bracket carries a natural Lie algebra
structure.
We now show that, if the matched pair is compatible with equivariant
Nijenhuis operators, this direct sum inherits a canonical \ENL algebra
structure.

\begin{cor}\label{Equ-on}
	Let $((\g,E),(\h,\huaE);\rho,\mu)$ be a matched pair of \ENL algebras.
	Then the linear operator
	\[
	E\oplus \huaE:\g\oplus\h \longrightarrow \g\oplus\h,
	\qquad
	x+\xi \longmapsto Ex+\huaE\xi,
	\]
	is an equivariant Nijenhuis operator on the Lie algebra
	$\g\bowtie\h$.
	Consequently, the triple
	$(\g\bowtie\h,[\cdot,\cdot]_{\bowtie},E\oplus\huaE)$
	is an \ENL algebra.
\end{cor}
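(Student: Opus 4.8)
The plan is to verify directly that $E \oplus \huaE$ is an equivariant Nijenhuis operator on the bicrossed product $\g \bowtie \h$, i.e. that it commutes with every adjoint map $\ad^{\bowtie}_{x+\xi}$, and then invoke Lemma~\ref{lem:equiv-implies-nij} to conclude that it is a Nijenhuis operator and hence that $(\g\bowtie\h,[\cdot,\cdot]_{\bowtie},E\oplus\huaE)$ is an \ENL algebra. By bilinearity it suffices to check the equivariance identity $(E\oplus\huaE)[z,w]_{\bowtie} = [(E\oplus\huaE)z,\,w]_{\bowtie}$ on pairs of elements $z = x+\xi$ and $w = y+\eta$ with $x,y\in\g$ and $\xi,\eta\in\h$; since the bracket formula \eqref{mathched-pair-Lie} is itself a sum of terms that are bilinear in the two $\g$-components and the two $\h$-components, it is in fact enough to check the identity on the four "pure" combinations $(x,y)$, $(x,\eta)$, $(\xi,y)$, $(\xi,\eta)$ and add the results.

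First I would treat the two diagonal cases. For $z=x$, $w=y$ in $\g$, the bicrossed bracket reduces to $[x,y]_\g$, and equivariance of $E$ on $\g$ gives $E[x,y]_\g = [Ex,y]_\g$, which is exactly what is required. Symmetrically, for $z=\xi$, $w=\eta$ in $\h$, equivariance of $\huaE$ on $\h$ does the job. Next I would handle the mixed case $z=x\in\g$, $w=\eta\in\h$: here $[x,\eta]_{\bowtie} = \rho(x)\eta - \mu(\eta)x$, so the left-hand side is $\huaE(\rho(x)\eta) - E(\mu(\eta)x)$, while the right-hand side $[Ex,\eta]_{\bowtie} = \rho(Ex)\eta - \mu(\eta)(Ex)$. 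These agree precisely because the first equalities in \eqref{Equ-1} and \eqref{Equ-2} give $\huaE(\rho(x)\eta) = \rho(Ex)\eta$ and $E(\mu(\eta)x) = \mu(\eta)(Ex)$. The remaining case $z=\xi\in\h$, $w=y\in\g$ is entirely analogous: $[\xi,y]_{\bowtie} = \mu(\xi)y - \rho(y)\xi$, and one uses instead the outer equalities $E(\mu(\xi)y) = \mu(\huaE\xi)y$ from \eqref{Equ-2} together with $\huaE(\rho(y)\xi) = \rho(y)(\huaE\xi)$ from \eqref{Equ-1}; note that $(E\oplus\huaE)\xi = \huaE\xi$, so the right-hand side $[\huaE\xi, y]_{\bowtie} = \mu(\huaE\xi)y - \rho(y)(\huaE\xi)$, matching what was computed.

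Adding the four contributions recovers the full identity $(E\oplus\huaE)[z,w]_{\bowtie} = [(E\oplus\huaE)z, w]_{\bowtie}$ for arbitrary $z,w$, which is the equivariance condition \eqref{equivariant-Nijenhuis} on $\g\bowtie\h$. By Lemma~\ref{lem:equiv-implies-nij}, $E\oplus\huaE$ is then automatically a Nijenhuis operator on $\g\bowtie\h$, so $(\g\bowtie\h,[\cdot,\cdot]_{\bowtie},E\oplus\huaE)$ is an \ENL algebra, as claimed. The only mild subtlety — and the place one must be careful rather than the place the argument is genuinely hard — is bookkeeping: Proposition~\ref{equiv-con} supplies two chains of equalities in each of \eqref{Equ-1} and \eqref{Equ-2}, and one must pick the correct member of each chain (the "$\rho(Ex)\xi$-side" versus the "$\rho(x)(\huaE\xi)$-side", and likewise for $\mu$) according to whether the deformed element sits on the $\g$-slot or the $\h$-slot of the mixed bracket. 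Since all the structural conditions needed are exactly the hypotheses packaged into the definition of a matched pair of \ENL algebras via Proposition~\ref{equiv-con}, there is no residual obstruction.
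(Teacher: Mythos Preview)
Your proof is correct and follows essentially the same approach as the paper: both verify the equivariance identity $(E\oplus\huaE)[z,w]_{\bowtie}=[(E\oplus\huaE)z,w]_{\bowtie}$ directly from the bicrossed-product bracket using the \ENL conditions on $(\g,E)$ and $(\h,\huaE)$ together with \eqref{Equ-1}--\eqref{Equ-2}. The only cosmetic difference is that the paper expands the full difference $(E\oplus\huaE)[x+\xi,y+\eta]_{\bowtie}-[(E\oplus\huaE)(x+\xi),y+\eta]_{\bowtie}$ in one step and observes the pairwise cancellations, whereas you organize the same computation as four separate pure-type checks before summing.
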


\begin{proof}
	For all $x,y\in\g$ and $\xi,\eta\in\h$.
	A direct computation using the bicrossed-product bracket
	\eqref{mathched-pair-Lie} yields
	\begin{align*}
		&(E\oplus \huaE)[x+\xi,y+\eta]_{\bowtie}
		-[(E\oplus \huaE)(x+\xi),y+\eta]_{\bowtie}\\
		&=E[x,y]_{\g}+E(\mu(\xi)y)-E(\mu(\eta)x)
		+\huaE[\xi,\eta]_{\h}+\huaE(\rho(x)\eta)-\huaE(\rho(y)\xi)\\
		&\quad-[Ex,y]_{\g}-\mu(\huaE\xi)y+\mu(\eta)(Ex)
		-[\huaE\xi,\eta]_{\h}-\rho(Ex)\eta+\rho(y)(\huaE\xi).
	\end{align*}
	Since $(\g,E)$ and $(\h,\huaE)$ are \ENL algebras, and the mutual actions
	$\rho$ and $\mu$ satisfy the equivariance conditions
	\eqref{Equ-1}--\eqref{Equ-2}, each group of terms cancels pairwise.
	Hence the above expression vanishes identically, showing that
	$E\oplus\huaE$ commutes with the adjoint action on $\g\bowtie\h$.
	Therefore $E\oplus\huaE$ is an equivariant Nijenhuis operator on
	$\g\bowtie\h$, completing the proof.
\end{proof}
\begin{rmk}\label{equiv}
	Let $(\g,[\cdot,\cdot]_{\g},E)$ and $(\h,[\cdot,\cdot]_{\h},\huaE)$ be \ENL
	algebras.
	Then the triple
	$(\g\bowtie\h,[\cdot,\cdot]_{\bowtie},E\oplus\huaE)$
	is an \ENL algebra if and only if
	$((\g,E),(\h,\huaE);\rho,\mu)$ is a matched pair of \ENL algebras.
\end{rmk}

We now examine how the \ENL matched pair structure behaves under the
associated Nijenhuis deformations. Let $((\g,E),(\h,\huaE);\rho,\mu)$ be a matched pair of \ENL algebras.
The equivariant Nijenhuis operators
$E:\g\to\g$, $\huaE:\h\to\h$, and
$E\oplus\huaE:\g\oplus\h\to\g\oplus\h$
induce the corresponding deformed Lie algebras
$(\g,[\cdot,\cdot]_{E})$, $(\h,[\cdot,\cdot]_{\huaE})$, and
$(\g\bowtie\h,[\cdot,\cdot]_{E\oplus\huaE})$.
For brevity, we denote these by
\[
\g_{E},\qquad \h_{\huaE},\qquad (\g\bowtie\h)_{E\oplus\huaE}.
\]
\begin{thm}\label{demp}
	If $((\g,E),(\h,\huaE);\rho,\mu)$ is a matched pair of \ENL algebras, then
	$(\g_{E},\h_{\huaE};\rho_{(E,\huaE)},\mu_{(E,\huaE)})$ is a matched pair of the
	deformed Lie algebras, where
	\begin{eqnarray}
		\label{eq:rep1}
		\rho_{(E,\huaE)}(x)\xi &=& \rho(Ex)\,\xi,\\
		\label{eq:rep4}
		\mu_{(E,\huaE)}(\xi)x &=& \mu(\huaE\xi)\,x,
	\end{eqnarray}
	for all $x\in\g$ and $\xi\in\h$.
	Moreover, there is a canonical isomorphism of Lie algebras
	\[
	\g_{E}\bowtie \h_{\huaE}\;\cong\;(\g\bowtie\h)_{E\oplus \huaE}.
	\]
\end{thm}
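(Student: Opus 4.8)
The plan is to reduce the statement to the classical bicrossed-product characterization of matched pairs, applied to the \emph{deformed} Lie algebra $(\g\bowtie\h)_{E\oplus\huaE}$, rather than to verify the matched-pair axioms \eqref{eq:mp1}--\eqref{eq:mp2} by hand.

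First, by Corollary~\ref{Equ-on} the triple $(\g\bowtie\h,[\cdot,\cdot]_{\bowtie},E\oplus\huaE)$ is an \ENL algebra, so by Proposition~\ref{induce-Lie-algebra} the deformed bracket $[X,Y]_{E\oplus\huaE}:=[(E\oplus\huaE)X,Y]_{\bowtie}$ is again a Lie bracket on $\g\oplus\h$. I would then note that $\g$ and $\h$ are Lie subalgebras of $(\g\bowtie\h)_{E\oplus\huaE}$, and that the induced brackets are precisely the deformed ones: for $x,y\in\g$,
\[
[x,y]_{E\oplus\huaE}=[Ex,y]_{\bowtie}=[Ex,y]_{\g}=[x,y]_{E},
\]
and symmetrically $[\xi,\eta]_{E\oplus\huaE}=[\xi,\eta]_{\huaE}$ for $\xi,\eta\in\h$. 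Thus $(\g\bowtie\h)_{E\oplus\huaE}$ is a Lie algebra whose underlying space is $\g_{E}\oplus\h_{\huaE}$, with $\g_{E}$ and $\h_{\huaE}$ as complementary Lie subalgebras.

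Next I would compute the mixed bracket. For $x\in\g$ and $\xi\in\h$, using $[x,\xi]_{\bowtie}=\rho(x)\xi-\mu(\xi)x$ together with the equivariance condition \eqref{Equ-2},
\[
[x,\xi]_{E\oplus\huaE}=[Ex,\xi]_{\bowtie}=\rho(Ex)\xi-\mu(\xi)(Ex)=\rho(Ex)\xi-\mu(\huaE\xi)x,
\]
where $\rho(Ex)\xi\in\h$ and $\mu(\huaE\xi)x\in\g$. Hence the unique $(\h\oplus\g)$-decomposition of the mixed bracket is $\rho_{(E,\huaE)}(x)\xi-\mu_{(E,\huaE)}(\xi)x$, with $\rho_{(E,\huaE)}$ and $\mu_{(E,\huaE)}$ exactly as in \eqref{eq:rep1}--\eqref{eq:rep4}.

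Finally, by the classical converse recalled around \eqref{mathched-pair-Lie} — a Lie algebra structure on $\g\oplus\h$ for which $\g$ and $\h$ are subalgebras automatically yields a matched pair, with the representations read off from the mixed bracket and with the given bracket equal to the bicrossed product — we conclude that $(\g_{E},\h_{\huaE};\rho_{(E,\huaE)},\mu_{(E,\huaE)})$ is a matched pair of Lie algebras (in particular $\rho_{(E,\huaE)}$ and $\mu_{(E,\huaE)}$ are representations and satisfy \eqref{eq:mp1}--\eqref{eq:mp2}), and that $\g_{E}\bowtie\h_{\huaE}=(\g\bowtie\h)_{E\oplus\huaE}$ as Lie algebras, the canonical isomorphism being the identity map on $\g\oplus\h$. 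The only delicate point is the bookkeeping in the mixed-bracket computation, i.e.\ applying \eqref{Equ-2} in the correct direction so that the $\g$- and $\h$-components emerge as $\mu(\huaE\xi)x$ and $\rho(Ex)\xi$ respectively; there is no genuine obstacle, since the classical theorem then supplies the representation and compatibility identities for free. (Alternatively one can verify these directly, e.g.\ $\rho_{(E,\huaE)}([x,y]_{E})=\rho(E[Ex,y]_{\g})=\rho([Ex,Ey]_{\g})=[\rho_{(E,\huaE)}(x),\rho_{(E,\huaE)}(y)]$ using equivariance, but the reduction above makes such computations unnecessary.)
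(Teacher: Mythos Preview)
Your proof is correct and follows essentially the same approach as the paper's own proof: both use Corollary~\ref{Equ-on} to obtain the \ENL structure on $\g\bowtie\h$, compute the deformed bracket $[\cdot,\cdot]_{E\oplus\huaE}$, identify it with the bicrossed-product bracket for the data $(\g_{E},\h_{\huaE};\rho_{(E,\huaE)},\mu_{(E,\huaE)})$, and then invoke the classical matched-pair/bicrossed-product equivalence. The only cosmetic difference is that the paper computes the full bracket $[x+\xi,y+\eta]_{E\oplus\huaE}$ at once, whereas you separately check that $\g_{E}$ and $\h_{\huaE}$ are subalgebras and then handle the mixed bracket; these are equivalent presentations of the same argument.
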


\begin{proof}
	By Corollary~\ref{Equ-on},
	$(\g\bowtie\h,[\cdot,\cdot]_{\bowtie},E\oplus\huaE)$ is an \ENL algebra.
	Hence the deformed bracket on $\g\bowtie\h$ is given by
	\[
	[u,v]_{E\oplus\huaE}=[(E\oplus\huaE)u,\,v]_{\bowtie}.
	\]
	Let $u=x+\xi$ and $v=y+\eta$.
	Using the bicrossed-product bracket \eqref{mathched-pair-Lie},
	the equivariance of $E$ and $\huaE$, and the compatibility identities
	\eqref{Equ-1}--\eqref{Equ-2}, we compute
	\[
	[x+\xi,y+\eta]_{E\oplus\huaE}
	=
	\bigl([x,y]_{E}
	+\mu_{(E,\huaE)}(\xi)y-\mu_{(E,\huaE)}(\eta)x\bigr)
	+
	\bigl([\xi,\eta]_{\huaE}
	+\rho_{(E,\huaE)}(x)\eta-\rho_{(E,\huaE)}(y)\xi\bigr).
	\]
	This is precisely the bicrossed-product bracket associated with the data
	$(\g_{E},\h_{\huaE};\rho_{(E,\huaE)},\mu_{(E,\huaE)})$.
	
	Moreover, for $x\in\g$ and $\xi\in\h$, the mixed bracket satisfies
	\[
	[x,\xi]_{E\oplus\huaE}
	=[Ex,\xi]_{\bowtie}
	=\rho(Ex)\xi-\mu(\xi)(Ex)
	=\rho_{(E,\huaE)}(x)\xi-\mu_{(E,\huaE)}(\xi)x.
	\]
	Therefore,
	$(\g_{E},\h_{\huaE};\rho_{(E,\huaE)},\mu_{(E,\huaE)})$ is a matched pair of Lie
	algebras, and the equality of brackets yields the Lie algebra isomorphism
	$
	\g_{E}\bowtie\h_{\huaE}\;\cong\;(\g\bowtie\h)_{E\oplus\huaE}.
$
\end{proof}
\begin{cor}
	If $((\g,E),(\h,\huaE);\rho,\mu)$ is a matched pair of \ENL algebras, then
	\[
	((\g_{E},E),(\h_{\huaE},\huaE);\rho_{(E,\huaE)},\mu_{(E,\huaE)})
	\]
	is again a matched pair of \ENL algebras.
\end{cor}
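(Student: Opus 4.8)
The plan is to deduce the statement from results already established, using the bicrossed-product characterization of matched pairs of \ENL algebras rather than a fresh computation. First I would note that by Proposition~\ref{induce-Lie-algebra} the pairs $(\g_{E},E)$ and $(\h_{\huaE},\huaE)$ are themselves \ENL algebras, so the objects entering the asserted matched pair are legitimate \ENL algebras. By Theorem~\ref{demp}, $(\g_{E},\h_{\huaE};\rho_{(E,\huaE)},\mu_{(E,\huaE)})$ is a matched pair of Lie algebras; in particular the bicrossed product $\g_{E}\bowtie\h_{\huaE}$ is defined, and Theorem~\ref{demp} identifies it with the deformed Lie algebra $(\g\bowtie\h)_{E\oplus\huaE}$. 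Reading off that identification, it is just the identity map on the common underlying vector space $\g\oplus\h$, so it automatically intertwines the operator $E\oplus\huaE$ with itself.

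Next I would use that $(\g\bowtie\h,[\cdot,\cdot]_{\bowtie},E\oplus\huaE)$ is an \ENL algebra by Corollary~\ref{Equ-on}; applying Proposition~\ref{induce-Lie-algebra} to it shows that $\big((\g\bowtie\h)_{E\oplus\huaE},\,E\oplus\huaE\big)$ is again an \ENL algebra. Transporting this along the identity identification above, $\big(\g_{E}\bowtie\h_{\huaE},\,E\oplus\huaE\big)$ is an \ENL algebra, with $E\oplus\huaE$ acting by $x+\xi\mapsto Ex+\huaE\xi$. I would then conclude by invoking Remark~\ref{equiv} in the direction ``\ENL structure on the bicrossed product implies matched pair of \ENL algebras'': since $\g_{E}\bowtie\h_{\huaE}$ equipped with $E\oplus\huaE$ is an \ENL algebra, the data $((\g_{E},E),(\h_{\huaE},\huaE);\rho_{(E,\huaE)},\mu_{(E,\huaE)})$ form a matched pair of \ENL algebras.

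As an equally short alternative, one may argue directly via Proposition~\ref{equiv-con} by checking the equivariance conditions \eqref{Equ-1}--\eqref{Equ-2} for the deformed data. Using $\rho_{(E,\huaE)}(x)\xi=\rho(Ex)\xi$ and applying \eqref{Equ-1} with $Ex$ in place of $x$ gives
\[
\huaE\big(\rho_{(E,\huaE)}(x)\xi\big)=\huaE\big(\rho(Ex)\xi\big)=\rho(E^{2}x)\xi=\rho_{(E,\huaE)}(Ex)\xi=\rho(Ex)(\huaE\xi)=\rho_{(E,\huaE)}(x)(\huaE\xi),
\]
and symmetrically, applying \eqref{Equ-2} with $\huaE\xi$ in place of $\xi$ yields the corresponding identities for $\mu_{(E,\huaE)}$; together with Theorem~\ref{demp} and Proposition~\ref{equiv-con}, this reproves the corollary.

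The hard part is essentially nonexistent: the statement is a formal consequence of Theorem~\ref{demp} combined with Propositions~\ref{induce-Lie-algebra} and~\ref{equiv-con} (or Corollary~\ref{Equ-on} and Remark~\ref{equiv}). The only points meriting a line of care are the harmless observation that the isomorphism of Theorem~\ref{demp} is the identity on the underlying vector space, so that it commutes with $E\oplus\huaE$, and---in the alternative route---that the substitutions $x\mapsto Ex$ and $\xi\mapsto\huaE\xi$ legitimately convert the original equivariance identities into the deformed ones.
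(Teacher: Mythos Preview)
Your proof is correct and follows essentially the same approach as the paper: use Proposition~\ref{induce-Lie-algebra} (applied to the \ENL algebra $\g\bowtie\h$ from Corollary~\ref{Equ-on}) to get that $E\oplus\huaE$ is equivariant on $(\g\bowtie\h)_{E\oplus\huaE}$, transport this via the identification of Theorem~\ref{demp} to $\g_{E}\bowtie\h_{\huaE}$, and conclude via Remark~\ref{equiv}. Your alternative direct verification through Proposition~\ref{equiv-con} is also valid and is a mild addition not present in the paper.
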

\begin{proof}
	By Proposition~\ref{induce-Lie-algebra}, the operator $E\oplus\huaE$ is an
	equivariant Nijenhuis operator on the Lie algebra
	$(\g\bowtie\h)_{E\oplus\huaE}$.
	By Theorem~\ref{demp}, this algebra is canonically isomorphic to
	$\g_{E}\bowtie\h_{\huaE}$.
	Hence $E\oplus\huaE$ defines an equivariant Nijenhuis operator on
	$\g_{E}\bowtie\h_{\huaE}$.
	By Remark~\ref{equiv}, it follows that
	$((\g_{E},E),(\h_{\huaE},\huaE);\rho_{(E,\huaE)},\mu_{(E,\huaE)})$
	is a matched pair of \ENL algebras.
\end{proof}

To complement the matched pair viewpoint, we recall the corresponding
quadratic notion provided by Manin triples of Lie algebras.

\medskip

A \emph{Manin triple} of Lie algebras is a triple
$((\mathfrak d,S),\g,\h)$ consisting of Lie algebras $\g$ and $\h$ together
with a quadratic Lie algebra $(\mathfrak d,S)$ such that:
\begin{itemize}
	\item[\rm (i)] $\g$ and $\h$ are Lie subalgebras of $\mathfrak d$;
	\item[\rm (ii)] the underlying vector space decomposes as
	$\mathfrak d=\g\oplus\h$;
	\item[\rm (iii)] both $\g$ and $\h$ are isotropic with respect to $S$, that is,
	$S(x,y)=0$ for all $x,y\in\g$, and similarly for $\h$.
\end{itemize}
The nondegenerate, invariant, symmetric bilinear form $S$ therefore
identifies $\g$ and $\h$ as complementary maximal isotropic Lie subalgebras
of $\mathfrak d$.

\medskip

It is well known that Manin triples of Lie algebras are in one-to-one
correspondence with matched pairs, providing a quadratic reformulation of
the matched pair construction \cite{D,Lu2,Majid}.
We briefly recall this classical equivalence.

Let $(\g,[\cdot,\cdot]_{\g})$ be a Lie algebra and suppose that its dual
space $\g^{*}$ is endowed with a Lie algebra structure
$[\cdot,\cdot]_{\g^{*}}$.
Then the following two descriptions are equivalent:
\begin{itemize}
	\item[\rm (i)]
	$(\g,\g^{*};\ad^{*},\add^{*})$ is a matched pair of Lie algebras, where
	$\ad^{*}$ is the coadjoint action of $\g$ on $\g^{*}$ and
	$\add^{*}$ is the coadjoint action of $\g^{*}$ on $\g$.
	
	\item[\rm (ii)]
	$((\g\oplus\g^{*},S),\g,\g^{*})$ is a Manin triple, where the bilinear form
	$S\in\otimes^{2}(\g\oplus\g^{*})^{*}$ is given by
	\begin{equation}\label{eq:sp}
		S(x+\xi,\,y+\eta)=\xi(y)+\eta(x),
		\qquad
		\forall x,y\in\g,\; \xi,\eta\in\g^{*}.
	\end{equation}
\end{itemize}

\medskip

We now extend this correspondence to the \ENL setting, using quadratic \ENL
algebras as introduced in Definition~\ref{defi:qua}.

\begin{defi}
	Let $(\g,E)$ and $(\h,\huaE)$ be two \ENL algebras, and let $(\huaG,\frkE,S)$ be a
	quadratic \ENL algebra.
	We say that
	\[
	((\huaG,\frkE,S),(\g,E),(\h,\huaE))
	\]
	is a \emph{Manin triple of \ENL algebras} if the following conditions hold:
	\begin{itemize}
		\item[\rm (i)]
		$(\g,E)$ and $(\h,\huaE)$ are \ENL subalgebras of $(\huaG,\frkE)$; that is,
		$\g$ and $\h$ are Lie subalgebras of $\huaG$ and
		$\frkE|_{\g}=E$, $\frkE|_{\h}=\huaE$.
		
		\item[\rm (ii)]
		The underlying vector space decomposes as
		$\huaG=\g\oplus\h$.
		
		\item[\rm (iii)]
		$\g$ and $\h$ are isotropic subalgebras with respect to the invariant
		bilinear form $S$, that is,
		$S(\g,\g)=0$ and $S(\h,\h)=0$.
	\end{itemize}
\end{defi}

\medskip

In the \ENL setting, the Manin triple structure requires only that the
equivariant Nijenhuis operator on the quadratic double restricts to the
given operators on the two isotropic subalgebras.
The precise relation with matched pairs of \ENL algebras is made explicit
below.

\begin{pro}\label{pro:mt}
	Let $(\huaG,\frkE,S)$ be a quadratic \ENL algebra, and let $(\g,E)$ and
	$(\h,\huaE)$ be \ENL algebras.
	The triple
	\[
	((\huaG,\frkE,S),(\g,E),(\h,\huaE))
	\]
	is a Manin triple of \ENL algebras if and only if
	$((\huaG,S),\g,\h)$ is a Manin triple of Lie algebras and
	\[
	\frkE|_{\g}=E,
	\qquad
	\frkE|_{\h}=\huaE.
	\]
\end{pro}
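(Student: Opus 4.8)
The plan is to prove the equivalence by directly unwinding the two definitions, observing that the standing hypothesis that $(\huaG,\frkE,S)$ is a quadratic \ENL algebra in the sense of Definition~\ref{defi:qua} already supplies all of the ambient data shared by the two notions: it records that $(\huaG,[\cdot,\cdot]_{\huaG})$ is a Lie algebra, that $\frkE$ is an equivariant Nijenhuis operator on it, that $(\huaG,S)$ is a quadratic Lie algebra, and that $\frkE$ and $S$ are compatible via \eqref{Eder-manin}. Consequently neither direction will require checking any compatibility between $\frkE$ and $S$, nor any intertwining between $E,\huaE$ and the form restricted to the subspaces; the whole content reduces to comparing the subalgebra, decomposition, and isotropy clauses.

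For the forward direction I would assume $((\huaG,\frkE,S),(\g,E),(\h,\huaE))$ is a Manin triple of \ENL algebras and simply read off its three defining clauses: clause (i) gives that $\g,\h$ are Lie subalgebras of $\huaG$ with $\frkE|_{\g}=E$ and $\frkE|_{\h}=\huaE$; clause (ii) gives $\huaG=\g\oplus\h$; clause (iii) gives that $\g,\h$ are isotropic for $S$. Since $(\huaG,S)$ is a quadratic Lie algebra, the Lie-theoretic part of these clauses is precisely the list of axioms for $((\huaG,S),\g,\h)$ to be a Manin triple of Lie algebras, while the restriction identities $\frkE|_{\g}=E$, $\frkE|_{\h}=\huaE$ have been extracted along the way.

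For the converse I would assume $((\huaG,S),\g,\h)$ is a Manin triple of Lie algebras and that $\frkE|_{\g}=E$, $\frkE|_{\h}=\huaE$, and verify the three clauses of the \ENL Manin triple. Clauses (ii) and (iii) are inherited verbatim from the classical Manin triple. For clause (i), the identities $\frkE|_{\g}=E$ and $\frkE|_{\h}=\huaE$ assert in particular that $\frkE$ preserves each of $\g$ and $\h$ and restricts there to the prescribed operator, so together with $\g,\h$ being Lie subalgebras this is exactly the statement that $(\g,E)$ and $(\h,\huaE)$ are \ENL subalgebras of $(\huaG,\frkE)$; equivariance of $E$ on $\g$ is part of the hypothesis that $(\g,E)$ is an \ENL algebra (and in any case follows by restricting $\frkE[u,v]_{\huaG}=[\frkE u,v]_{\huaG}$ to $u,v\in\g$), and similarly for $\h$. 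Hence all three clauses hold. The proposition is thus essentially a dictionary statement; the only genuine point of care is the bookkeeping remark that, because $\frkE$ satisfies \eqref{Eder-manin} globally and invariance and isotropy pass to subspaces, nothing beyond matching the definitions remains to be done. Its role in the sequel is to let us pass freely between the matched-pair and Manin-triple descriptions of \ENL doubles.
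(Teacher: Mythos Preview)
Your proposal is correct and matches the paper's treatment: the paper in fact states Proposition~\ref{pro:mt} without proof, since it is an immediate dictionary translation between the definition of a Manin triple of \ENL algebras and that of a classical Manin triple augmented by the restriction conditions $\frkE|_{\g}=E$, $\frkE|_{\h}=\huaE$. Your unwinding of the three clauses in each direction is exactly the intended argument.
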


\begin{thm}\label{matched-manin-equivalent}
	Let $(\g,[\cdot,\cdot]_{\g},E)$ and $(\g^{*},[\cdot,\cdot]_{\g^{*}},E^{*})$ be two \ENL
	algebras, where $\g^{*}$ is the dual space of $\g$.
	The following conditions are equivalent:
	\begin{itemize}
		\item[\rm (i)]
		$((\g,E),(\g^{*},E^{*});\ad^{*},\add^{*})$ is a matched pair of \ENL algebras,
		where $\ad^{*}$ and $\add^{*}$ are the coadjoint representations of $\g$ and
		$\g^{*}$, respectively.
		
		\item[\rm (ii)]
		$((\g\oplus\g^{*},\widehat{E},S),(\g,E),(\g^{*},E^{*}))$ is a Manin triple of \ENL
		algebras, where $\widehat{E}:=E\oplus E^{*}$ is given by
		\[
		\widehat{E}(x+\xi)=Ex+E^{*}\xi,
		\]
		and $S$ is the canonical invariant symmetric bilinear form given by \eqref{eq:sp}.
		\end{itemize}
\end{thm}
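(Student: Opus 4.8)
The plan is to reduce both (i) and (ii) to the same pair of conditions: that $(\g,\g^{*};\ad^{*},\add^{*})$ is a matched pair of Lie algebras, and that $\widehat E=E\oplus E^{*}$ is an equivariant Nijenhuis operator on the bicrossed product $\g\bowtie\g^{*}$. The first of these is handled by the classical matched-pair/Manin-triple correspondence recalled before the statement; the second is exactly the content of Corollary~\ref{Equ-on} together with its converse, Remark~\ref{equiv}. So the whole argument is an assembly: strip the \ENL decorations off both sides, match the underlying Lie-theoretic statements classically, and check that the operator datum matches on the nose.

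First I would record two elementary facts about $\widehat E$. The restriction identities $\widehat E|_{\g}=E$ and $\widehat E|_{\g^{*}}=E^{*}$ are immediate from $\widehat E(x+\xi)=Ex+E^{*}\xi$. Next, the compatibility \eqref{Eder-manin} between $\widehat E$ and the canonical form $S$ of \eqref{eq:sp} is automatic: for $x,y\in\g$ and $\xi,\eta\in\g^{*}$,
\[
S\big(\widehat E(x+\xi),\,y+\eta\big)
=\langle E^{*}\xi,\,y\rangle+\langle \eta,\,Ex\rangle
=\langle \xi,\,Ey\rangle+\langle \eta,\,Ex\rangle
=S\big(x+\xi,\,\widehat E(y+\eta)\big),
\]
using only that $E^{*}$ is the transpose of $E$. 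Hence, once $(\g\oplus\g^{*},[\cdot,\cdot]_{\bowtie})$ is a quadratic Lie algebra with $S$, adjoining $\widehat E$ makes it a quadratic \ENL algebra precisely when $\widehat E$ is equivariant on $\g\bowtie\g^{*}$; no extra constraint tying $E$ to $E^{*}$ is introduced.

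Then the two implications run as follows. For (i) $\Rightarrow$ (ii): a matched pair of \ENL algebras is in particular a matched pair of Lie algebras, so the classical correspondence yields the Manin triple of Lie algebras $((\g\oplus\g^{*},S),\g,\g^{*})$ with bracket $[\cdot,\cdot]_{\bowtie}$; Corollary~\ref{Equ-on} makes $\widehat E$ an equivariant Nijenhuis operator on $\g\bowtie\g^{*}$; combined with the two facts above and Proposition~\ref{pro:mt}, this gives the Manin triple of \ENL algebras. For (ii) $\Rightarrow$ (i): a Manin triple of \ENL algebras restricts to a Manin triple of Lie algebras, so classically the bracket on $\g\oplus\g^{*}$ is $[\cdot,\cdot]_{\bowtie}$ and $(\g,\g^{*};\ad^{*},\add^{*})$ is a matched pair of Lie algebras, while $\widehat E=E\oplus E^{*}$ is equivariant on $\g\bowtie\g^{*}$ and restricts to $E,E^{*}$; invoking the converse direction of Remark~\ref{equiv} upgrades this to a matched pair of \ENL algebras, i.e.\ condition (i). The only step that genuinely needs care is the displayed identity above, ensuring that passing to the quadratic double does not secretly couple $E$ and $E^{*}$ — it does not, precisely because $E^{*}$ is the transpose of $E$; everything else is bookkeeping on top of the classical equivalence, Corollary~\ref{Equ-on}, and Remark~\ref{equiv}.
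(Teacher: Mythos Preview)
Your proposal is correct and follows essentially the same approach as the paper: both directions hinge on the classical matched-pair/Manin-triple correspondence, Corollary~\ref{Equ-on} and Remark~\ref{equiv} for the equivariance of $\widehat E$ on $\g\bowtie\g^{*}$, and the $S$-symmetry computation for $\widehat E$, which you verify identically. The only cosmetic difference is that you front-load the $S$-symmetry and restriction identities as preliminary facts and cite Proposition~\ref{pro:mt} explicitly, whereas the paper embeds the $S$-symmetry check inside the (i)$\Rightarrow$(ii) direction.
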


\begin{proof}
	(i)$\Rightarrow$(ii).
	Assume that $((\g,E),(\g^{*},E^{*});\ad^{*},\add^{*})$ is a matched pair of \ENL
	algebras.  Then, by Corollary~\ref{Equ-on}, the direct sum
	$(\g\oplus\g^{*},[\cdot,\cdot]_{\bowtie},\widehat{E})$ is an \ENL algebra, where
	$[\cdot,\cdot]_{\bowtie}$ is the bicrossed-product bracket \eqref{mathched-pair-Lie}.
	Moreover, by the classical correspondence between matched pairs and Manin
	triples, $((\g\oplus\g^{*},S),\g,\g^{*})$ is a Manin triple of Lie algebras with
	$S$ given by \eqref{eq:sp}.
	
	It remains to check that $(\g\oplus\g^{*},\widehat{E},S)$ is a quadratic \ENL
	algebra, i.e. that $\widehat{E}$ is $S$-symmetric:
	\[
	S(\widehat{E}(x+\xi),\,y+\eta)-S(x+\xi,\,\widehat{E}(y+\eta))=0.
	\]
	Using \eqref{eq:sp} and the fact that $E^{*}$ is the dual map of $E$, we
	compute
	\begin{eqnarray*}
		&& S(Ex+E^{*}\xi,\,y+\eta)-S(x+\xi,\,Ey+E^{*}\eta) \\
		&=&\langle E^{*}\xi,y\rangle + \langle \eta,Ex\rangle
		- \langle \xi,Ey\rangle - \langle E^{*}\eta,x\rangle
		=0,
	\end{eqnarray*}
	for all $x,y\in\g$ and $\xi,\eta\in\g^{*}$.
	Hence $(\g\oplus\g^{*},\widehat{E},S)$ is a quadratic \ENL algebra and
	$((\g\oplus\g^{*},\widehat{E},S),(\g,E),(\g^{*},E^{*}))$ is a Manin triple of \ENL
	algebras.
	
	\smallskip
	
	(ii)$\Rightarrow$(i).
	Conversely, assume that
	$((\g\oplus\g^{*},\widehat{E},S),(\g,E),(\g^{*},E^{*}))$ is a Manin triple of \ENL
	algebras with $S$ given by \eqref{eq:sp}.  Forgetting the operator, we
	obtain a Manin triple of Lie algebras $((\g\oplus\g^{*},S),\g,\g^{*})$, hence,
	by the classical correspondence, $(\g,\g^{*};\ad^{*},\add^{*})$ is a matched
	pair of Lie algebras and the induced Lie bracket on $\g\oplus\g^{*}$ is the
	bicrossed-product bracket \eqref{mathched-pair-Lie}.
	
	Since $\widehat{E}=E\oplus E^{*}$ is an equivariant Nijenhuis operator on this
	double Lie algebra and restricts to $E$ on $\g$ and to $E^{*}$ on $\g^{*}$,
	Remark~\ref{equiv} yields that
	$((\g,E),(\g^{*},E^{*});\ad^{*},\add^{*})$ is a matched pair of \ENL algebras.
\end{proof}

\section{Structure Theory of ENL Bialgebras}\label{sec:bia}

In this section we study the structure theory of
\emph{\ENL bialgebras}.
These arise as a rigid subclass of the \NL bialgebras introduced in
\cite{Zohreh}, obtained by imposing equivariance of the Nijenhuis operator
with respect to both the Lie algebra and the Lie coalgebra structures. We briefly recall the basic algebraic notions, for more details see \cite{K}.
\smallskip

Lie bialgebras encode a compatibility between a Lie algebra structure and
its dual, governed by a Lie coalgebra.
More precisely, a Lie coalgebra $(\g,\Delta)$ consists of a vector space
$\g$ equipped with a coantisymmetric coproduct
$\Delta:\g\to\g\otimes\g$, that is,
$\Delta=-\tau\circ\Delta$, satisfying the co--Jacobi identity
\[
(\Id+\epsilon+\epsilon^{2})(\Id\otimes\Delta)\Delta=0,
\]
where $\tau:\g\otimes\g\rightarrow\g\otimes\g$ is a flip map and $\epsilon(x\otimes y\otimes z)=z\otimes x\otimes y$.

A Lie bialgebra is a pair $(\g,\g^{*})$ of Lie algebras in duality,
whose compatibility is controlled by such a coproduct through the
condition that $\Delta$ be a $1$-cocycle for the adjoint representation,
namely,
\[
\Delta([x,y]_{\g})=
(\ad_x\otimes\Id+\Id\otimes\ad_x)\Delta(y)
-
(\ad_y\otimes\Id+\Id\otimes\ad_y)\Delta(x),
\qquad \forall x,y\in\g.
\]

A particularly important class is provided by coboundary Lie bialgebras,
for which the cobracket is generated by an element
$r\in\g\otimes\g$ via
\[
\Delta(x)=(\ad_x\otimes\Id+\Id\otimes\ad_x)(r).
\]
In this case, the induced Lie bialgebra structure will be denoted by
$(\g,\Delta_r)$.
\medskip

It is worth noting that if $(\g,\g^{*})$ is a Lie bialgebra, then
$(\g^{*},\g)$ is again a Lie bialgebra.
Another fundamental fact is that every Lie bialgebra admits a canonical
\emph{Drinfel'd double}.
Given a Lie bialgebra $(\g,\g^{*})$, its Drinfel'd double is defined as the
Lie algebra
\[
\mathfrak{d} := \g \oplus \g^{*}
\]
equipped with the unique Lie bracket for which $\g$ and $\g^{*}$ embed as
complementary isotropic Lie subalgebras.
This bracket, usually denoted by $[\cdot,\cdot]_{\bowtie}$, is characterized on
mixed terms by
\[
[x,\xi]_{\bowtie} = \ad^{*}_{x}\xi - \add^{*}_{\xi}x,
\qquad \forall x\in\g,\ \xi\in\g^{*},
\]
where the mutual actions are determined by the Lie bialgebra structure via
\[
\langle \add^{*}_{\xi}x , \eta \rangle
= - \langle x , [\xi,\eta]_{\g^{*}} \rangle,
\qquad
\langle \ad^{*}_{x}\xi , y \rangle
= - \langle \xi , [x,y]_{\g} \rangle,
\]
for all $y\in\g$ and $\eta\in\g^{*}$.
The natural symmetric pairing
\[
S(x+\xi,\; y+\eta)
= \langle \xi , y \rangle + \langle \eta , x \rangle
\]
is invariant under the Drinfel'd double bracket.
Consequently, $(\mathfrak{d},S)$, together with its isotropic Lie
subalgebras $\g$ and $\g^{*}$, forms a \emph{Manin triple}.
In finite dimensions, Drinfel'd showed that this construction yields a
one-to-one correspondence between Lie bialgebras and Manin triples; see
\cite{K,Lu2}.
\medskip

Before turning to the \ENL bialgebra setting, it is useful to recall a more
general framework, namely that of \NL bialgebras introduced in \cite{Zohreh}.
This broader class provides a natural algebraic analogue of
Poisson-Nijenhuis structures on manifolds and allows Nijenhuis operators
to interact with Lie bialgebra structures through deformation of the
underlying brackets.
Recalling this general setting clarifies the structural role played by
equivariance and highlights the additional rigidity inherent in the \ENL
bialgebra framework developed further.

\begin{defi}[\cite{Zohreh}]\label{def:NLbialgebra}
	Let $(\g,N)$ and $(\g^{*},N^{*})$ be \NL algebras, and assume that
	$(\g,\g^{*})$ is a Lie bialgebra with the corresponding cobracket $\Delta$.
	The triple $(\g,\g^{*},N)$ is called an \emph{\NL bialgebra} if the following
	conditions are satisfied:
	\begin{itemize}
		\item[(i)]
		The Lie algebra $\bigl(\g,[\cdot,\cdot]_N\bigr)$ together with
		$\bigl(\g^{*},[\cdot,\cdot]_{\g^{*}}\bigr)$ forms a Lie bialgebra;
		
		\item[(ii)]
		The Lie algebra $\bigl(\g,[\cdot,\cdot]_N\bigr)$ together with
		$\bigl(\g^{*},[\cdot,\cdot]_{N^{*}}\bigr)$ forms a Lie bialgebra.
	\end{itemize}
	Here $[\cdot,\cdot]_N$ and $[\cdot,\cdot]_{N^{*}}$ denote the deformed Lie
	brackets obtained from the original brackets on $\g$ and $\g^{*}$ by
	application of the Nijenhuis operators $N$ and $N^{*}$, respectively.
\end{defi}
\noindent
The first condition is equivalent to requiring that
$\bigl(\g,[\cdot,\cdot]_{\g}\bigr)$ and $\bigl(\g^{*},[\cdot,\cdot]_{N^{*}}\bigr)$
form a Lie bialgebra.
The second condition can be expressed in terms of a concomitant measuring
the interaction between the original cobracket $\Delta$ of the Lie
bialgebra $(\g,\g^{*})$ and the Nijenhuis operator $N$.

\smallskip

\noindent
More precisely, the concomitant $C(\Delta,N)$ is the $(2,1)$-tensor defined by
\[
C(\Delta,N)(\xi,\eta)([x,y]_{\g}) =
\Bigl[
\iota_{\,N^{*}\circ \ad^{*}_{x}}\Delta(y)
- \iota_{\,N^{*}\circ \ad^{*}_{y}}\Delta(x)
- \iota_{\,\ad^{*}_{x}}\Delta(Ny)
+ \iota_{\,\ad^{*}_{y}}\Delta(Nx)
\Bigr](\xi,\eta),
\]
for all $x,y\in\g$ and $\xi,\eta\in\g^{*}$.
Here $\iota_{\phi}$ denotes the natural extension of a linear map
$\Phi:\g\to\g$, with $\phi:=\Phi^{*}:\g^{*}\to\g^{*}$ its transpose, to an
operator
\[
\iota_{\phi}:\wedge^{k}\g\longrightarrow \wedge^{k}\g
\]
defined by
\begin{equation}\label{eq:iota}
	(\iota_{\phi}P)(\xi_{1},\dots,\xi_{k})
	=
	\sum_{i=1}^{k}
	P(\xi_{1},\dots,\phi(\xi_{i}),\dots,\xi_{k}),
	\qquad
	\forall P\in\wedge^{k}\g,\ \xi_{1},\dots,\xi_{k}\in\g^{*}.
\end{equation}
For instance, in this notation the cobracket $\Delta$ is a $1$-cocycle if
\[
\Delta([x_{1},x_{2}])
=
\iota_{\ad^{*}_{x_{1}}}\Delta(x_{2})
-
\iota_{\ad^{*}_{x_{2}}}\Delta(x_{1}).
\]
The vanishing of $C(\Delta,N)$ characterizes condition~{\rm(ii)} in
Definition~\ref{def:NLbialgebra}.
In order to obtain a fully algebraic construction mirroring the theory of
Poisson-Nijenhuis manifolds, it was shown in \cite{Zohreh} that, under
suitable conditions, \NL bialgebras generate hierarchies of \NL bialgebras.

\begin{lem}[\cite{Zohreh}]\label{Zohreh2}
	Let $(\g,\Delta,N)$ be an \NL bialgebra.
	Then $(\g,\Delta,N)$ generates a hierarchy of \NL bialgebras provided that
	one of the following conditions holds:
	\begin{itemize}
		\item $N:\g\to\g$ is an equivariant Nijenhuis operator;
		\item $N^{*}:\g^{*}\to\g^{*}$ is an equivariant Nijenhuis operator;
		\item the $1$-cocycle $\Delta$ associated with the Lie bialgebra
		$(\g,\g^{*})$ is a coboundary.
	\end{itemize}
\end{lem}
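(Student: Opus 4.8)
The plan is to isolate a single deformation step and then iterate it. By a hierarchy generated by $(\g,\Delta,N)$ we mean the family $(\g,\Delta^{(k)},N)$, $k\ge 0$, in which $N$ is kept fixed and $\Delta^{(k)}$ is the cobracket on $\g$ dual to the iterated deformed bracket $[\cdot,\cdot]_{(N^{*})^{k}}$ on $\g^{*}$. Since $N^{*}$ is again a Nijenhuis operator for each $[\cdot,\cdot]_{(N^{*})^{k}}$, with $N^{*}$-deformation $[\cdot,\cdot]_{(N^{*})^{k+1}}$, the passage from $\Delta^{(k)}$ to $\Delta^{(k+1)}$ always has the form (with the usual sign conventions)
\[
\Delta^{(k+1)}=\iota_{N^{*}}\circ\Delta^{(k)}-\Delta^{(k)}\circ N ,\qquad \Delta^{(0)}=\Delta ,
\]
and $(\g^{*},N^{*})$ remains an \NL algebra at every stage. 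By Definition~\ref{def:NLbialgebra}, for $(\g,\Delta^{(k)},N)$ to be an \NL bialgebra one needs three things: that $\Delta^{(k)}$ be a $1$-cocycle for the original $\ad$; that it be a $1$-cocycle for $\ad^{N}$ on $(\g,[\cdot,\cdot]_{N})$; and that $C(\Delta^{(k)},N)=0$. The first is furnished, via the remark following Definition~\ref{def:NLbialgebra}, by condition (i) of the already-established \NL bialgebra $(\g,\Delta^{(k-1)},N)$; the second is precisely condition (ii) of that same \NL bialgebra; so only the third is new. Hence, after noting that the standing hypothesis propagates along the hierarchy -- trivially when $N$ or $N^{*}$ is equivariant, and as part of the argument below when $\Delta$ is a coboundary -- the whole lemma reduces by induction on $k$ to the implication
\[
C(\Delta,N)=0\ \Longrightarrow\ C(\Delta_{N},N)=0 ,\qquad \Delta_{N}:=\iota_{N^{*}}\circ\Delta-\Delta\circ N ,
\]
for an \NL bialgebra $(\g,\Delta,N)$ satisfying that hypothesis.

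For the first hypothesis, $N$ an equivariant Nijenhuis operator, I would use the equivariance identity $N\ad_{x}=\ad_{x}N$ (that is, $\underline{\ad}_{\,x}=0$), the ensuing simplification $\ad^{N}_{x}=\ad_{Nx}$ from \eqref{ad-n}, and the dual identities $N^{*}\ad^{*}_{x}=\ad^{*}_{x}N^{*}=\ad^{*}_{Nx}$. Substituting $\Delta_{N}=\iota_{N^{*}}\Delta-\Delta\circ N$ into the four-term bracket $\iota_{\,N^{*}\circ\ad^{*}_{x}}\Delta_{N}(y)-\iota_{\,N^{*}\circ\ad^{*}_{y}}\Delta_{N}(x)-\iota_{\,\ad^{*}_{x}}\Delta_{N}(Ny)+\iota_{\,\ad^{*}_{y}}\Delta_{N}(Nx)$ defining $C(\Delta_{N},N)(\xi,\eta)([x,y]_{\g})$, and sliding every $N$ and $N^{*}$ through the (co)adjoint operators by equivariance, I expect the expression to reorganize as $\iota_{N^{*}}\bigl(C(\Delta,N)(\xi,\eta)([x,y]_{\g})\bigr)-C(\Delta,N)(\xi,\eta)([Nx,y]_{\g})$ together with terms proportional to the $1$-cocycle defect of $\Delta$; all of these vanish since $C(\Delta,N)=0$ and $\Delta$ is a $1$-cocycle, giving $C(\Delta_{N},N)=0$.

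For the second hypothesis, $N^{*}$ equivariant, I would use that Definition~\ref{def:NLbialgebra} is symmetric under interchanging $(\g,N)$ with $(\g^{*},N^{*})$ -- because ``being a Lie bialgebra'' is a symmetric relation between a Lie algebra and its dual -- so that $(\g^{*},\Delta_{\g},N^{*})$, with $\Delta_{\g}$ the cobracket on $\g^{*}$ dual to $[\cdot,\cdot]_{\g}$, is again an \NL bialgebra, now with the equivariant operator on the algebra side; the first case applies to it, and dualizing the resulting hierarchy back to $\g$ gives the claim. For the third hypothesis, $\Delta=\Delta_{r}$ a coboundary, I would pass from cobrackets to $r$-matrices: when $\Delta$ is generated by $r\in\g\otimes\g$, the two cocycle conditions of an \NL bialgebra translate into $N$-twisted Yang--Baxter-type identities relating $r$, $N$ and the $\ad$-invariant element $r+\tau r$, while the deformed cobracket $\Delta_{N}$ is again of coboundary type -- modulo an $\ad$-invariant correction that contributes nothing to any cobracket -- generated by an element $r_{1}$ assembled from $r$ and $N$ (essentially by applying $N$ to one tensor leg of $r$). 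Feeding $r_{1}$ into the same identities and using those already satisfied by $r$ shows that $(\g,\Delta_{N},N)=(\g,\Delta_{r_{1}},N)$ again meets the three requirements above, so the hypothesis indeed propagates and iteration yields a hierarchy of coboundary \NL bialgebras $(\g,\Delta_{r_{k}},N)$.

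The step I expect to be the main obstacle is the reorganization in the equivariant-$N$ case: although equivariance lets every $N$ pass through every (co)adjoint operator, one must track which of the two tensor legs of $\Delta$ each operator hits and reconcile the signs coming from coantisymmetry of $\Delta$ with those built into the definition of $C(\Delta,N)$, so that nothing is left over once the expression is grouped into copies of $C(\Delta,N)$ and of the $1$-cocycle defect of $\Delta$. A secondary subtlety lies in the coboundary case: one should check that applying $N$ to the first versus the second tensor factor of $r$ yields the same $\Delta_{N}$, and that the $r_{k}$ keep generating coantisymmetric cobrackets at every stage -- which is precisely where coantisymmetry of $\Delta_{r}$, rather than any equivariance, does the work.
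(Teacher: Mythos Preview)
The paper does not prove Lemma~\ref{Zohreh2}: it is stated with the citation \cite{Zohreh} and no argument is supplied, the result being quoted from Ravanpak's paper on \NL bialgebras. There is therefore no ``paper's own proof'' against which to compare your proposal.

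That said, a few remarks on your sketch are in order. Your reduction of the inductive step is sound: if $(\g,\Delta^{(k-1)},N)$ is an \NL bialgebra, then condition~(i) of Definition~\ref{def:NLbialgebra} for it, read through the equivalence stated just after that definition, gives precisely that $\Delta^{(k)}$ is a $1$-cocycle for $\ad$, and condition~(ii) gives that $\Delta^{(k)}$ is a $1$-cocycle for $\ad^{N}$; so only $C(\Delta^{(k)},N)=0$ is genuinely new at each stage. Your duality argument for the second bullet is also correct in principle, since the \NL bialgebra axioms are symmetric in $(\g,N)$ and $(\g^{*},N^{*})$.

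Where your proposal remains incomplete is exactly where you flag it. In the equivariant-$N$ case you assert that the four-term expression for $C(\Delta_{N},N)$ ``reorganizes'' into copies of $C(\Delta,N)$ and the cocycle defect, but you have not actually carried out this bookkeeping, and the interaction of $\iota_{N^{*}}$ with $\iota_{\ad^{*}_{x}}$ on \emph{both} tensor legs (via \eqref{eq:iota}) is precisely the place where terms can fail to cancel if one is careless about which slot each operator hits. In the coboundary case, your claim that $\Delta_{N}$ is again coboundary ``modulo an $\ad$-invariant correction'' is the right shape, but making this precise requires writing $r_{1}$ explicitly and checking that the Yang--Baxter-type constraints on $r$ together with $C(\Delta_{r},N)=0$ really do force the corresponding constraints on $r_{1}$; you have only asserted this. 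If you want a complete proof rather than a plan, these two computations must be done in full, or you should consult the original reference \cite{Zohreh} where they are presumably carried out.
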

\noindent
Let us stress, by means of a concrete example, that in the \NL bialgebra
setting the existence of a hierarchy does not require equivariance of
both Nijenhuis operators.
Equivariance of either $N$ or $N^{*}$ already suffices.
The following example gives an explicit illustration of
this phenomenon.

\begin{ex}\label{ex-1equiv}
	Let $\g$ be the direct sum of two copies of the $2$-dimensional solvable
	Lie algebra.
	With respect to a basis $\{X_{1},X_{2},X_{3},X_{4}\}$, the nonzero Lie
	brackets are
	\[
	[X_{1},X_{2}] = X_{2},
	\qquad
	[X_{3},X_{4}] = X_{4}.
	\]
	Define a cobracket $\Delta:\g\to\wedge^{2}\g$ by
	\[
	\Delta(X_{1})=0,\qquad
	\Delta(X_{2})=0,\qquad
	\Delta(X_{3})=X_{3}\wedge X_{4},\qquad
	\Delta(X_{4})=X_{3}\wedge X_{4},
	\]
	and a linear operator $N:\g\to\g$ by
	\[
	N(X_{1})=X_{1},\qquad
	N(X_{2})=X_{2},\qquad
	N(X_{3})=X_{4},\qquad
	N(X_{4})=0.
	\]
	The operator $N$ is a Nijenhuis operator, and one checks that
	$(\g,\Delta,N)$ is an \NL bialgebra.
	Moreover, $N$ is equivariant, so Lemma~\ref{Zohreh2} implies that this \NL
	bialgebra generates a hierarchy of Lie bialgebras.
	However, the dual operator $N^{*}$ is not equivariant, since
	\[
	N^{*}[X^{*}_{3},X^{*}_{4}] \neq [X^{*}_{3},N^{*}(X^{*}_{4})],
	\]
 where $\{X^*_{1},X^*_{2},X^*_{3},X^*_{4}\} $ be the dual basis of the dual vector space $\g^*.$
	The only nonzero deformed brackets induced by $N$ and $N^{*}$ are
	\[
	[X_{1},X_{2}]_{N} = X_{2},
	\qquad
	[X^{*}_{3},X^{*}_{4}]_{N^{*}} = -X^{*}_{3}.
	\]
\end{ex}
\medskip

A class of \NL bialgebras $(\g,\g^{*})$ relevant for the present work consists
of those equipped with equivariant Nijenhuis operators $E$ on $\g$ and
$E^{*}$ on $\g^{*}$.
In what follows, we focus exclusively on this class.

\begin{defi}
	An \emph{\ENL bialgebra} $(\g,\g^{*},E)$ is a Lie bialgebra $(\g,\g^{*})$ such
	that $(\g,[\cdot,\cdot],E)$ and $(\g^{*},[\cdot,\cdot]_{\g^{*}},E^{*})$ are
\ENL algebras.
\end{defi}

\begin{cor}
	If $(\g,\g^{*},E)$ is an \ENL bialgebra, then $(\g^{*},\g,E^{*})$ is also an
	\ENL bialgebra.
\end{cor}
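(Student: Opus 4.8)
The plan is to unwind the definition of an \ENL bialgebra and reduce the claim to two facts: the classical symmetry of Lie bialgebra duality, and the elementary observation that, in finite dimensions, the canonical identification $\g\cong\g^{**}$ intertwines $E$ with $(E^{*})^{*}$. Concretely, saying that $(\g^{*},\g,E^{*})$ is an \ENL bialgebra means that (a) $(\g^{*},\g)$ is a Lie bialgebra, (b) $(\g^{*},[\cdot,\cdot]_{\g^{*}},E^{*})$ is an \ENL algebra, and (c) $(\g^{**},[\cdot,\cdot]_{\g^{**}},(E^{*})^{*})$ is an \ENL algebra, where $\g^{**}$ carries the Lie algebra structure dual to the Lie coalgebra $\g^{*}$ and $(E^{*})^{*}$ denotes the transpose of $E^{*}$.

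First I would dispatch (a): this is exactly the classical fact, already recalled in this section, that the dual of a Lie bialgebra is again a Lie bialgebra, and it involves no operator data. Condition (b) is literally part of the hypothesis that $(\g,\g^{*},E)$ is an \ENL bialgebra, so nothing needs to be checked. For (c), I would invoke finite-dimensionality: the evaluation map $\g\to\g^{**}$ is a vector space isomorphism, under which the Lie algebra structure on $\g^{**}$ induced by the coalgebra $\g^{*}$ coincides with $[\cdot,\cdot]_{\g}$ and $(E^{*})^{*}$ is identified with $E$. Hence (c) reduces to the hypothesis that $(\g,[\cdot,\cdot]_{\g},E)$ is an \ENL algebra, which holds by assumption.

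I do not expect any real obstacle here: the only point requiring (routine) verification is that the canonical isomorphism $\g\cong\g^{**}$ is simultaneously compatible with the Lie bracket, the Lie cobracket, and the transpose operation on endomorphisms, which is classical and immediate once all spaces are finite-dimensional. In summary, the assignment $(\g,\g^{*},E)\mapsto(\g^{*},\g,E^{*})$ is an involution on the class of \ENL bialgebras which merely exchanges the roles of $\g$ and $\g^{*}$ in the defining conditions, whence the corollary.
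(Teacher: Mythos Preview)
Your argument is correct and is exactly the intended one: the paper states this corollary without proof, treating it as immediate from the definition, and your unwinding of the three conditions together with the finite-dimensional identification $(\g^{**},(E^{*})^{*})\cong(\g,E)$ is precisely the routine verification that justifies it.
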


\begin{lem}\label{ad-NL}
	Every \ENL bialgebra $(\g,\g^{*},E)$ is an \NL bialgebra in the sense of
	Definition~\ref{def:NLbialgebra}.
	Moreover, \ENL bialgebras generate a hierarchy of Lie bialgebras.
\end{lem}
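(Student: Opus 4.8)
The plan is to reduce the statement to verifying conditions (i) and (ii) of Definition~\ref{def:NLbialgebra}, and then to obtain the hierarchy either directly or by invoking Lemma~\ref{Zohreh2}. Since $(\g,E)$ and $(\g^{*},E^{*})$ are \ENL algebras, Lemma~\ref{lem:equiv-implies-nij} shows that $E$ and $E^{*}$ are Nijenhuis operators, so $(\g,E)$ and $(\g^{*},E^{*})$ are \NL algebras, while $(\g,\g^{*})$ is a Lie bialgebra by hypothesis. Write $\Delta$ for the cobracket of $(\g,\g^{*})$ and $\partial_{x}:=\ad_{x}\otimes\Id+\Id\otimes\ad_{x}$ for the coboundary operator on $\g\otimes\g$, so the $1$-cocycle condition reads $\Delta([x,y]_{\g})=\partial_{x}\Delta(y)-\partial_{y}\Delta(x)$. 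I will prove the stronger assertion that for every $k,m\ge 0$ the map $\Delta\circ E^{m}:\g\to\g\otimes\g$ is a $1$-cocycle for the Lie algebra $(\g,[\cdot,\cdot]_{E^{k}})$, where $[x,y]_{E^{k}}:=[E^{k}x,y]_{\g}$; taking $(k,m)=(1,0)$ gives condition (i), $(k,m)=(1,1)$ gives condition (ii), and the diagonal $(k,m)=(j,j)$ with $j\ge 1$ exhibits a hierarchy of Lie bialgebras.

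The engine is a combined equivariance identity. Equivariance of $E^{*}$ on $\g^{*}$ gives $[E^{*}\xi,\eta]_{\g^{*}}=E^{*}[\xi,\eta]_{\g^{*}}=[\xi,E^{*}\eta]_{\g^{*}}$; dualizing this yields the balanced identity
\[
(E\otimes\Id)\circ\Delta=(\Id\otimes E)\circ\Delta=\Delta\circ E,
\]
which moreover identifies $\Delta\circ E^{m}$ with the cobracket dual to the bracket $[\cdot,\cdot]_{(E^{*})^{m}}$ on $\g^{*}$, so $\Delta\circ E^{m}$ is a genuine cobracket. Equivariance of $E$ on $\g$ gives $E\ad_{x}=\ad_{x}E=\ad_{Ex}$, so $E\otimes\Id$ and $\Id\otimes E$ commute with every $\partial_{x}$. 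Combining these,
\[
\partial_{Ex}\Delta(y)=(E\ad_{x}\otimes\Id+\Id\otimes E\ad_{x})\Delta(y)=(\ad_{x}\otimes\Id)\Delta(Ey)+(\Id\otimes\ad_{x})\Delta(Ey)=\partial_{x}\Delta(Ey),
\]
and iterating, $\partial_{E^{a}x}\Delta(E^{b}y)=\partial_{E^{a+b}x}\Delta(y)=\partial_{x}\Delta(E^{a+b}y)$ for all $a,b\ge 0$.

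Now fix $k,m\ge 0$. Since $[x,y]_{E^{k}}=[E^{k}x,y]_{\g}$, the adjoint representation of $(\g,[\cdot,\cdot]_{E^{k}})$ is $x\mapsto\ad_{E^{k}x}$, so its coboundary operator at $x$ is $\partial_{E^{k}x}$. Using equivariance of $E$, $(\Delta\circ E^{m})([x,y]_{E^{k}})=\Delta\bigl(E^{m}[E^{k}x,y]_{\g}\bigr)=\Delta\bigl([E^{k+m}x,y]_{\g}\bigr)$; applying the original $1$-cocycle identity at the pair $(E^{k+m}x,y)$ and then the iterated identity above gives
\[
(\Delta\circ E^{m})([x,y]_{E^{k}})=\partial_{E^{k+m}x}\Delta(y)-\partial_{E^{k+m}y}\Delta(x)=\partial_{E^{k}x}(\Delta\circ E^{m})(y)-\partial_{E^{k}y}(\Delta\circ E^{m})(x),
\]
which is precisely the $1$-cocycle condition for $(\g,[\cdot,\cdot]_{E^{k}})$ with cobracket $\Delta\circ E^{m}$. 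Specializing $(k,m)$ as described shows that $(\g,\g^{*},E)$ satisfies conditions (i) and (ii), hence is an \NL bialgebra; the diagonal specialization yields the hierarchy of Lie bialgebras, which alternatively follows from Lemma~\ref{Zohreh2} since $E$ is an equivariant Nijenhuis operator.

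I expect the main obstacle to be the combined equivariance identity $\partial_{Ex}\Delta(y)=\partial_{x}\Delta(Ey)$: it genuinely requires \emph{both} the equivariance of $E$ on $\g$ (for the commutation with $\partial_{x}$) and of $E^{*}$ on $\g^{*}$ (for the balanced identity), which is exactly the feature that fails in the asymmetric \NL setting and that the \ENL framework is designed to restore. The remaining points — that $\Delta\circ E^{m}$ is a legitimate cobracket because it is dual to the Lie bracket $[\cdot,\cdot]_{(E^{*})^{m}}$ on $\g^{*}$, and the bookkeeping with powers of $E$ — are routine.
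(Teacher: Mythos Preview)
Your proof is correct. The central identity $\partial_{Ex}\Delta(y)=\partial_x\Delta(Ey)$ is exactly the paper's $\iota_{\ad^*_{Ex}}\Delta(y)=\iota_{\ad^*_x}\Delta(Ey)$, obtained the same way by combining the equivariance of $E$ on $\g$ with the balanced identity coming from the equivariance of $E^*$ on $\g^*$. The organization differs somewhat: the paper checks condition~(i) directly from this identity and disposes of condition~(ii) by observing that the concomitant $C(\Delta,E)$ vanishes, then cites Lemma~\ref{Zohreh2} for the hierarchy. You instead prove the uniform two-parameter statement that $\Delta\circ E^m$ is a $1$-cocycle for $[\cdot,\cdot]_{E^k}$, from which (i), (ii), and the hierarchy all drop out as specializations $(k,m)=(1,0),(1,1),(j,j)$. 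Your route is slightly more self-contained (it bypasses the concomitant entirely) and marginally more general, while the paper's is shorter once the reference to the concomitant is granted; the mathematical substance is the same.
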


\begin{proof}
By Remark~\ref{equiv-ad}, the concomitant $C(\Delta,E)$ of the cobracket
$\Delta$ with respect to $E$ vanishes identically.
It therefore remains to show that
$
\bigl((\g,[\cdot,\cdot]_{E}),(\g^{*},[\cdot,\cdot]_{\g^{*}})\bigr)
$
forms a Lie bialgebra.
Since $\ad^{E}_{x}=\ad_{Ex}$, the $1$-cocycle condition for $\Delta$ with
respect to the deformed bracket $[\cdot,\cdot]_{E}$ reads
\[
\Delta([x,y]_{E})
=
\iota_{\ad^{*}_{Ex}}\Delta(y)
-
\iota_{\ad^{*}_{Ey}}\Delta(x).
\]
On the other hand, $\Delta$ is a $1$-cocycle for the original bracket
$[\cdot,\cdot]_{\g}$, hence
\[
\Delta([x,Ey]_{\g})
=
\iota_{\ad^{*}_{x}}\Delta(Ey)
-
\iota_{\ad^{*}_{Ey}}\Delta(x).
\]
Using the equivariance of $E$ and $E^{*}$ (see \eqref{adn-equiv}), we obtain
\[
\iota_{\ad^{*}_{Ex}}\Delta(y)
=
\iota_{E^{*}\circ\ad^{*}_{x}}\Delta(y)
=
\iota_{\ad^{*}_{x}}\Delta(Ey),
\]
	and similarly for the second term.
	Thus the two cocycle identities coincide, and $\Delta$ is also a
	$1$-cocycle for the deformed bracket $[\cdot,\cdot]_{E}$.
	Therefore,
	$
	\bigl((\g,[\cdot,\cdot]_{E}),(\g^{*},[\cdot,\cdot]_{\g^{*}})\bigr)
	$ is a Lie bialgebra, which proves that $(\g,\g^{*},E)$ is an \NL bialgebra.
	The hierarchy property follows immediately from
	Lemma~\ref{Zohreh2}.
\end{proof}
In contrast with Example~\ref{ex-1equiv}, the following example exhibits a
genuine \ENL bialgebra.
\begin{ex}
	Consider the Lie algebra in Example~\ref{ex-1equiv} and equip it with the
	Lie bialgebra structure determined by the $1$-cocycle
	\[
	\Delta(X_{1}) = 0, \qquad
	\Delta(X_{2}) = 2 X_{1}\wedge X_{2}, \qquad
	\Delta(X_{3}) = X_{3}\wedge X_{4}, \qquad
	\Delta(X_{4}) = 0.
	\]
	Define a linear operator $E:\g\to\g$ by
	\[
	E(X_{1})=X_{1}, \qquad E(X_{2})=X_{2}, \qquad
	E(X_{3})=0,   \qquad E(X_{4})=0.
	\]
	Then $E$ and its transpose $E^{*}$ are equivariant Nijenhuis operators on
	$\g$ and $\g^{*}$, respectively.
	Hence $(\g,\Delta,E)$ defines an \ENL bialgebra.
\end{ex}
In what follows, we show that the classical correspondence between Lie
bialgebras and matched pairs extends directly to the \ENL setting.

\begin{thm}\label{bialgebra-matched}
	Let $(\g,[\cdot,\cdot]_{\g},E)$ and $(\g^{*},[\cdot,\cdot]_{\g^{*}},E^{*})$
	be \ENL algebras.
	Then $(\g,\g^{*},E)$ is an \ENL bialgebra if and only if
	$((\g,E),(\g^{*},E^{*});\ad^{*},\add^{*})$ is a matched pair of \ENL
	algebras.
\end{thm}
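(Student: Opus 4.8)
The plan is to reduce everything to the classical correspondence between Lie bialgebras and matched pairs, the key point being that the equivariance of the Nijenhuis operators is \emph{automatic} in the coadjoint situation and therefore imposes no new compatibility. Recall (as recalled above, cf.\ \cite{D,Lu2,Majid}) that $(\g,\g^{*})$ is a Lie bialgebra if and only if $(\g,\g^{*};\ad^{*},\add^{*})$ is a matched pair of Lie algebras, the induced bracket on $\g\oplus\g^{*}$ being the bicrossed product \eqref{mathched-pair-Lie}. On the other hand, by Proposition~\ref{equiv-con}, the datum $((\g,E),(\g^{*},E^{*});\ad^{*},\add^{*})$ is a matched pair of \ENL algebras precisely when $(\g,\g^{*};\ad^{*},\add^{*})$ is a matched pair of Lie algebras \emph{and} the two equivariance identities \eqref{Equ-1}--\eqref{Equ-2} hold, i.e.
\[
E^{*}(\ad^{*}_{x}\xi)=\ad^{*}_{Ex}\xi=\ad^{*}_{x}(E^{*}\xi),
\qquad
E(\add^{*}_{\xi}x)=\add^{*}_{E^{*}\xi}x=\add^{*}_{\xi}(Ex),
\]
for all $x\in\g$ and $\xi\in\g^{*}$. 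Thus the theorem follows once we observe that, under the standing assumption that $(\g,E)$ and $(\g^{*},E^{*})$ are \ENL algebras, these identities hold for free.

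Indeed, the first identity is exactly the assertion that $(\g^{*};E^{*},\ad^{*})$ is an \ENE-representation of $(\g,E)$ in the sense of \eqref{representation-equivariant-Liealg}; this is Corollary~\ref{thm:coadjoint} applied to the \ENL algebra $(\g,E)$. For the second identity I would apply the same Corollary~\ref{thm:coadjoint}, this time to the \ENL algebra $(\g^{*},E^{*})$: its coadjoint \ENE-representation is carried by $(\g^{*})^{*}=\g$ with operator $(E^{*})^{*}=E$ and action $\add^{*}$ (the coadjoint action of $\g^{*}$ on $\g$), and the defining condition \eqref{representation-equivariant-Liealg} of an \ENE-representation reads here precisely $E(\add^{*}_{\xi}x)=\add^{*}_{E^{*}\xi}x=\add^{*}_{\xi}(Ex)$. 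Hence \eqref{Equ-1}--\eqref{Equ-2} are automatic.

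The two implications now follow immediately. If $(\g,\g^{*},E)$ is an \ENL bialgebra, then $(\g,\g^{*})$ is a Lie bialgebra, hence $(\g,\g^{*};\ad^{*},\add^{*})$ is a matched pair of Lie algebras; adjoining the automatic identities \eqref{Equ-1}--\eqref{Equ-2} and invoking Proposition~\ref{equiv-con} shows that $((\g,E),(\g^{*},E^{*});\ad^{*},\add^{*})$ is a matched pair of \ENL algebras. Conversely, if the latter is a matched pair of \ENL algebras, then, forgetting the operators, it is in particular a matched pair of Lie algebras, so $(\g,\g^{*})$ is a Lie bialgebra; since $(\g,E)$ and $(\g^{*},E^{*})$ are \ENL algebras by hypothesis, $(\g,\g^{*},E)$ is an \ENL bialgebra. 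I do not anticipate a genuine obstacle here: the only substantive ingredient is the classical Lie-bialgebra/matched-pair correspondence, and the single thing one must get right is the identification of the two coadjoint \ENE-representations $(\g^{*};E^{*},\ad^{*})$ and $(\g;E,\add^{*})$ with the representation data occurring in the matched pair, so that Corollary~\ref{thm:coadjoint} delivers \eqref{Equ-1}--\eqref{Equ-2} without further work.
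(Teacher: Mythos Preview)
Your proposal is correct and follows essentially the same approach as the paper: both reduce to the classical Lie-bialgebra/matched-pair correspondence and invoke Corollary~\ref{thm:coadjoint} (applied once to $(\g,E)$ and once to $(\g^{*},E^{*})$) to see that the equivariance conditions \eqref{Equ-1}--\eqref{Equ-2} are automatic. Your use of Proposition~\ref{equiv-con} to make this explicit is a harmless repackaging of the paper's direct appeal to Definition~\ref{MPR}.
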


\begin{proof}
	Assume first that $(\g,\g^{*},E)$ is an \ENL bialgebra.
	By Corollary~\ref{thm:coadjoint}, $(\g^{*};E^{*},\ad^{*})$ is an
	\ENE-representation of the \ENL algebra $(\g,E)$, while
	$(\g;E,\add^{*})$ is an \ENE-representation of the \ENL algebra
	$(\g^{*},E^{*})$.
	Moreover, it is well known that $(\g,\g^{*})$ is a Lie bialgebra if and
	only if $(\g,\g^{*};\ad^{*},\add^{*})$ is a matched pair of Lie algebras.
	It follows that the \ENE-equivariance conditions in
	Definition~\ref{MPR} are automatically satisfied, and hence
	$((\g,E),(\g^{*},E^{*});\ad^{*},\add^{*})$ is a matched pair of \ENL
	algebras.
	
	Conversely, suppose that
	$((\g,E),(\g^{*},E^{*});\ad^{*},\add^{*})$ is a matched pair of \ENL
	algebras.
	Then $(\g,\g^{*})$ is a matched pair of Lie algebras, and therefore a
	Lie bialgebra.
	The \ENE-equivariance of $E$ and $E^{*}$ with respect to the mutual
	actions implies that $(\g,\g^{*},E)$ is an \ENL bialgebra.
\end{proof}
\begin{cor}\label{cor:dd}
	Let $(\g,\g^{*},E)$ be an \ENL bialgebra.
	Then the Drinfel'd double
$
	\mathfrak d := \g \bowtie \g^{*}
$
	together with the linear map
	\[
	\frkE : \g \oplus \g^{*} \longrightarrow \g \oplus \g^{*},
	\qquad
	\frkE(x+\xi) = Ex + E^{*}\xi, \quad \forall x\in \g,\xi \in \g^*
	\]
	forms an \ENL algebra $(\mathfrak d,\frkE)$.
\end{cor}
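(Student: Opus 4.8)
The plan is to reduce the statement entirely to the matched-pair machinery already in place, so that no new computation is needed. The key observation is that, by Theorem~\ref{bialgebra-matched}, an \ENL bialgebra $(\g,\g^{*},E)$ is precisely the same data as a matched pair of \ENL algebras $((\g,E),(\g^{*},E^{*});\ad^{*},\add^{*})$, and the underlying matched pair of Lie algebras is exactly the one whose bicrossed product is the Drinfel'd double $\mathfrak d=\g\bowtie\g^{*}$.

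First I would invoke Theorem~\ref{bialgebra-matched} to pass from the \ENL bialgebra $(\g,\g^{*},E)$ to the matched pair of \ENL algebras $((\g,E),(\g^{*},E^{*});\ad^{*},\add^{*})$. Then I would apply Corollary~\ref{Equ-on} with $\h=\g^{*}$, $\huaE=E^{*}$, $\rho=\ad^{*}$ and $\mu=\add^{*}$: it yields at once that $E\oplus E^{*}$ is an equivariant Nijenhuis operator on the bicrossed product $\g\bowtie\g^{*}$, which by definition is the Drinfel'd double $\mathfrak d$. Since the map $\frkE(x+\xi)=Ex+E^{*}\xi$ is exactly this operator $E\oplus E^{*}$, the triple $(\mathfrak d,[\cdot,\cdot]_{\bowtie},\frkE)$ is an \ENL algebra, which is the assertion.

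The only point that deserves a sentence of care is the compatibility between the ``double'' terminology used for Lie bialgebras in this section and the ``bicrossed product'' terminology of Section~\ref{sec:mp}: the Drinfel'd double bracket on $\g\oplus\g^{*}$, characterized on mixed terms by $[x,\xi]_{\bowtie}=\ad^{*}_{x}\xi-\add^{*}_{\xi}x$, coincides with the bicrossed-product bracket \eqref{mathched-pair-Lie} attached to the matched pair $(\g,\g^{*};\ad^{*},\add^{*})$; this is classical and already recalled in the text, and is moreover recorded in the \ENL setting by Theorem~\ref{matched-manin-equivalent}. Consequently there is no genuine obstacle here: the corollary is a direct specialization of Corollary~\ref{Equ-on} through Theorem~\ref{bialgebra-matched}, and the proof amounts to matching up notation between the two descriptions. (If one preferred a self-contained verification, one could instead check $\frkE$-equivariance on mixed brackets directly from the equivariance conditions \eqref{Equ-1}--\eqref{Equ-2} specialized to $\rho=\ad^{*}$, $\mu=\add^{*}$, but this merely re-proves Corollary~\ref{Equ-on} in this case.)
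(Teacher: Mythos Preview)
Your proposal is correct and takes essentially the same approach as the paper: invoke Theorem~\ref{bialgebra-matched} to obtain the matched pair of \ENL algebras $((\g,E),(\g^{*},E^{*});\ad^{*},\add^{*})$, then apply Corollary~\ref{Equ-on} to conclude that $\frkE=E\oplus E^{*}$ is an equivariant Nijenhuis operator on $\mathfrak d=\g\bowtie\g^{*}$. Your additional remark reconciling the Drinfel'd double bracket with the bicrossed-product bracket is accurate but not strictly needed, since the paper already uses the notation $\mathfrak d=\g\bowtie\g^{*}$ and has recalled this identification earlier.
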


\begin{proof}
	By Theorem~\ref{bialgebra-matched}, the pair
	$((\g,E),(\g^{*},E^{*});\ad^{*},\add^{*})$
	is a matched pair of \ENL algebras.
	Hence the bicrossed product $\mathfrak d=\g\bowtie\g^{*}$ carries a natural
	Lie algebra structure.
	By Corollary~\ref{Equ-on}, the operator
	$\frkE=E\oplus E^{*}$ is an equivariant Nijenhuis operator on
	$\mathfrak d$.
	Therefore $(\mathfrak d,\frkE)$ is an \ENL algebra.
\end{proof}

\smallskip

The next step is to describe the dual Lie algebra of the Drinfel'd double
and to show that the \ENL structure extends to the full double Lie
bialgebra.

\smallskip

Given a Lie bialgebra $(\g,\g^{*})$, its Drinfel'd double is the Lie algebra
$
\mathfrak d := \g \bowtie \g^{*},
$
characterized by the fact that $\g$ and $\g^{*}$ embed as complementary
isotropic Lie subalgebras with respect to the canonical symmetric
bilinear form on $\g\oplus\g^{*}$.
A canonical tensor in $\mathfrak d\otimes\mathfrak d$ plays a central role
in describing the dual Lie bialgebra structure.

Let $\{e_{1},\dots,e_{n}\}$ be a basis of $\g$ with dual basis
$\{\xi_{1},\dots,\xi_{n}\}$ of $\g^{*}$.
Define the canonical $r$-matrix
\[
r = \sum_{i=1}^{n} e_{i}\otimes \xi_{i}
\in \g\otimes\g^{*} \subset \mathfrak d\otimes\mathfrak d .
\]
This element induces a Lie bracket on $\mathfrak d^{*}$, denoted by
$[\cdot,\cdot]_{r}$, given by
\[
[\xi + x,\;\eta + y]_{r}
=
\bigl(-[\xi,\eta]_{\g^{*}},\; [x,y]_{\g}\bigr),
\qquad \forall x,y\in\g,\;\xi,\eta\in\g^{*}.
\]
The resulting Lie algebra is denoted by $\mathfrak d^{*}_{r}$, and the
pair $(\mathfrak d,\mathfrak d^{*}_{r})$ forms a \emph{quasi-triangular Lie
bialgebra}; see \cite{RS}.
We now show that the \ENL structure on $(\g,\g^{*})$ extends naturally to
this double Lie bialgebra.
\begin{pro}\label{Double-r-bia}
	Let $(\g,\g^{*},E)$ be an \ENL bialgebra.
	Then the triple $(\mathfrak d,\mathfrak d^{*}_{r},\frkE)$ is also an \ENL
	bialgebra, where
	\[
	\frkE : \mathfrak d=\g\oplus\g^{*} \longrightarrow \g\oplus\g^{*},
	\qquad
	\frkE(x+\xi)=Ex+E^{*}\xi .
	\]
\end{pro}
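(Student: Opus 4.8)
The plan is to verify directly, for the triple $(\mathfrak d,\mathfrak d^{*}_{r},\frkE)$, the three conditions in the definition of an \ENL bialgebra: that $(\mathfrak d,\mathfrak d^{*}_{r})$ is a Lie bialgebra, that $(\mathfrak d,[\cdot,\cdot]_{\bowtie},\frkE)$ is an \ENL algebra, and that $(\mathfrak d^{*},[\cdot,\cdot]_{r},\frkE^{*})$ is an \ENL algebra. The first of these needs no new argument: it is precisely the statement, recalled just before the proposition, that the canonical element $r=\sum_i e_i\otimes\xi_i$ turns the Drinfel'd double into a quasi-triangular Lie bialgebra $(\mathfrak d,\mathfrak d^{*}_{r})$ (see \cite{RS}). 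The second condition is exactly Corollary~\ref{cor:dd}, which already produces the \ENL algebra $(\mathfrak d,\frkE)$.

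It thus remains to handle $(\mathfrak d^{*}_{r},\frkE^{*})$, and here I would first pin down the relevant data. Writing $\mathfrak d^{*}=\g^{*}\oplus\g$ with the canonical pairing $\langle\zeta+a,\,x+\xi\rangle=\langle\zeta,x\rangle+\langle\xi,a\rangle$ for $x,a\in\g$ and $\xi,\zeta\in\g^{*}$, a direct computation of the transpose gives
\[
\frkE^{*}(\zeta+a)=E^{*}\zeta+Ea,\qquad\forall\,\zeta\in\g^{*},\ a\in\g,
\]
so $\frkE^{*}$ acts as $E^{*}$ on the $\g^{*}$-summand and as $E$ on the $\g$-summand; and by the formula for $[\cdot,\cdot]_{r}$ the Lie algebra $\mathfrak d^{*}_{r}$ is the direct sum of $(\g^{*},-[\cdot,\cdot]_{\g^{*}})$ and $(\g,[\cdot,\cdot]_{\g})$, with no mixed brackets. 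Since $\frkE^{*}$ preserves this splitting, the equivariance condition for $\frkE^{*}$ decouples into the equivariance of $E^{*}$ on $(\g^{*},[\cdot,\cdot]_{\g^{*}})$ and of $E$ on $(\g,[\cdot,\cdot]_{\g})$, both of which are part of the hypothesis that $(\g,\g^{*},E)$ is an \ENL bialgebra. Concretely, for $\zeta,\zeta'\in\g^{*}$ and $a,a'\in\g$,
\[
\frkE^{*}[\zeta+a,\,\zeta'+a']_{r}
=-E^{*}[\zeta,\zeta']_{\g^{*}}+E[a,a']_{\g}
=-[\zeta,E^{*}\zeta']_{\g^{*}}+[a,Ea']_{\g}
=[\zeta+a,\,\frkE^{*}(\zeta'+a')]_{r},
\]
so $\frkE^{*}$ is an equivariant Nijenhuis operator on $\mathfrak d^{*}_{r}$ (hence Nijenhuis, by Lemma~\ref{lem:equiv-implies-nij}), i.e.\ $(\mathfrak d^{*}_{r},\frkE^{*})$ is an \ENL algebra. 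Together with the first two conditions this gives the claim.

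I do not expect a real obstacle here: the argument is essentially bookkeeping. The one point deserving care is the correct identification of the dual operator $\frkE^{*}$ and of the dual bracket $[\cdot,\cdot]_{r}$ under the standard Drinfel'd-double conventions; once these are fixed, the \ENL conditions on $\mathfrak d^{*}_{r}$ are inherited termwise from those on $\g$ and $\g^{*}$, and the only genuinely substantive input — that $(\mathfrak d,\mathfrak d^{*}_{r})$ is a Lie bialgebra — is supplied by the classical quasi-triangular double construction.
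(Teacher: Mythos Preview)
Your proposal is correct and follows essentially the same route as the paper's proof: invoke Corollary~\ref{cor:dd} for the \ENL structure on $\mathfrak d$, then use the direct-sum decomposition of $[\cdot,\cdot]_r$ to reduce the equivariance of $\frkE^{*}$ on $\mathfrak d^{*}_{r}$ to the equivariance of $E$ and $E^{*}$ on the factors. The only cosmetic difference is that the paper mentions the Nijenhuis torsion of $\frkE^{*}$ splitting as well, whereas you (equivalently) check equivariance and appeal to Lemma~\ref{lem:equiv-implies-nij}.
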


\begin{proof}
	By Corollary~\ref{cor:dd}, the operator
	$\frkE=E\oplus E^{*}$ is an equivariant Nijenhuis operator on the
	Drinfel'd double $\mathfrak d=\g\bowtie\g^{*}$.
	It therefore remains to show that the dual operator
	\[
	\frkE^{*} : \mathfrak d^{*}_{r} \longrightarrow \mathfrak d^{*}_{r},
	\qquad
	\frkE^{*}(\xi + x) = E^{*}\xi + Ex ,
	\]
	is an equivariant Nijenhuis operator with respect to the bracket
	$[\cdot,\cdot]_{r}$.
		Since
	\[
	[\xi + x,\;\eta + y]_{r}
	=
	\bigl(-[\xi,\eta]_{\g^{*}},\; [x,y]_{\g}\bigr),
	\]
	the Lie bracket on $\mathfrak d^{*}_{r}$ decomposes as the direct sum of
	the Lie brackets on $\g^{*}$ and on $\g$.
	The Nijenhuis torsion of $\frkE^{*}$ therefore splits into the torsions of
	$E^{*}$ on $(\g^{*},[\cdot,\cdot]_{\g^{*}})$ and of $E$ on
	$(\g,[\cdot,\cdot]_{\g})$, both of which vanish since
	$(\g,\g^{*},E)$ is an \ENL bialgebra.
	
	The equivariance conditions for $\frkE^{*}$ reduce in the same way to the
	equivariance identities for $E$ and $E^{*}$ on $\g$ and $\g^{*}$, respectively.
	Hence $\frkE^{*}$ is an equivariant Nijenhuis operator on
	$\mathfrak d^{*}_{r}$, and the claim follows.
\end{proof}

\section{Equivariant Nijenhuis structures on Rota-Baxter Lie algebras: \ENLE-RB algebras}
\label{sec:bia2}

Equivariant Nijenhuis operators interact naturally with Lie-type structures
beyond the matched pair and Manin triple settings.
A particularly rigid and geometrically meaningful framework is provided by
quadratic Rota-Baxter Lie algebras, where splitting operators,
invariant bilinear forms, and solutions of the classical Yang-Baxter equation
are tightly intertwined.

In this section, we investigate equivariant Nijenhuis operators that are
compatible with quadratic Rota-Baxter Lie algebra structures.
Relying on the correspondence between quadratic Rota-Baxter Lie algebras
and factorizable Lie bialgebras, we show that such compatible pairs
naturally give rise to \ENL bialgebra structures.
\medskip

We begin by recalling the necessary background on coboundary Lie bialgebras,
classical $r$-matrices, and Rota-Baxter Lie algebras.
\smallskip

Let $(\g,[\cdot,\cdot]_\g)$ be a Lie algebra and let $r\in\g\otimes\g$.
Viewing $r$ as a bilinear form on $\g^*$, it induces a linear map
\[
r_+:\g^*\to\g, \qquad r_+(\xi)=r(\xi,\cdot), \quad  \forall\xi\in\g^*,
\]
together with its adjoint $r_-:=-r_+^*$.
These maps provide a fundamental link between tensorial data on $\g$
and algebraic structures on its dual. The element $r$ defines a natural candidate cobracket on $\g$ via
\[
\Delta=\Delta_r,\qquad
\Delta_r(x)=(\ad_x\otimes\Id+\Id\otimes\ad_x)r,
\qquad \forall x\in\g.
\]
The compatibility of $\Delta_r$ with the Lie bracket is governed by the
Schouten bracket
\[
\lcf r,r \rcf
= [r_{12},r_{13}] + [r_{13},r_{23}] + [r_{12},r_{23}],
\]
where $r_{ij}$ denotes the standard embeddings of $r$ into
$U(\g)^{\otimes3}$.
The condition $\lcf r,r \rcf=0$ is the classical
\emph{Yang-Baxter equation}, and its solutions are called
\emph{classical $r$-matrices}.
\smallskip

If $r$ satisfies the classical Yang-Baxter equation and  the symmetric part of $r$ is  $\ad$-invariant:
\[
(\ad_x\otimes\Id+\Id\otimes\ad_x)(r+\sigma(r))=0,
\qquad \forall x\in\g,
\] where $\sigma$ denotes the flip map on $\g\otimes\g$, then the bracket on
$\g^*$ defined by
\[
[\xi,\eta]_r
= \ad_{r_+\xi}^*\eta - \ad_{r_-\eta}^*\xi,
\qquad \forall\xi,\eta\in\g^*,
\]
satisfies the Jacobi identity and thus endows $\g^*$ with a Lie algebra
structure \cite{CP}, denoted by $\g_r^*$.
Moreover, in this case $\Delta_r$ is a $1$-cocycle in the Chevalley-Eilenberg
cohomology of $\g$ with coefficients in the $\g$-module $\g\otimes\g$,
for the adjoint representation.
Under these conditions, $(\g,\Delta_r)$ is a coboundary Lie bialgebra,
and the associated pair $(\g,\g_r^*)$ is called \emph{quasi-triangular}.
In the skew-symmetric case $r+\sigma(r)=0$, the invariance condition is
automatic and the structure is called \emph{triangular}.

\smallskip

A distinguished role is played by the operator
\[
I:=r_+ - r_- : \g^* \to \g,
\]
which measures the deviation of $r$ from skew-symmetry.
The $\ad$-invariance of $r+\sigma(r)$ is equivalent to the equivariance
property
\[
I \circ \ad_x^* = \ad_x \circ I, \qquad \forall x \in \g.
\]
Equivalently, $\tfrac12 I$ corresponds to the symmetric part
$\tfrac12(r+\sigma(r))$ under the natural identification
$\g \otimes \g \simeq \Hom(\g^*,\g)$.
If $r$ is skew-symmetric, then $I=0$.
When $I$ is nondegenerate, it yields a canonical identification between
$\g$ and $\g^*$, and the corresponding quasi-triangular Lie bialgebra is
called \emph{factorizable}, see \cite{RS}.
\smallskip

Rota-Baxter operators provide a natural source of such structures.
A \emph{Rota-Baxter operator of weight $\lambda$} on $\g$ is a linear map
$B:\g \to \g$ satisfying
\[
[Bx,By]_\g
= B\big([Bx,y]_\g + [x,By]_\g + \lambda [x,y]_\g\big),
\qquad \forall x,y \in \g.
\]
Associated to $B$ is the \emph{descendant Lie bracket}
\[
[x,y]_B = [Bx,y]_\g + [x,By]_\g + \lambda [x,y]_\g,
\]
which turns $\g$ into a Lie algebra $\g_B$, called the
\emph{descendant Lie algebra}, for which $B$ is a Lie algebra homomorphism from
$\g_B$ to $\g$.
\medskip

Equivariant Nijenhuis operators are compatible with the Rota-Baxter
descendant construction.
\begin{pro}\label{rota-baxterLie-ave}
	Let $(\g,[\cdot,\cdot]_\g,B)$ be a Rota-Baxter Lie algebra of weight $\lambda$,
	and let $E:\g\to\g$ be an equivariant Nijenhuis operator on
	$(\g,[\cdot,\cdot]_\g)$.
	If $E\circ B = B\circ E$, then $E$ is an equivariant Nijenhuis operator on
	the descendant Lie algebra $(\g,[\cdot,\cdot]_B)$.
\end{pro}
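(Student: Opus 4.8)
The plan is to verify directly the defining equivariance identity $E[x,y]_B = [Ex,y]_B$ for the descendent bracket, and then invoke Lemma~\ref{lem:equiv-implies-nij} to conclude that $E$ is automatically a Nijenhuis operator for $[\cdot,\cdot]_B$. Since $(\g,[\cdot,\cdot]_B)=\g_B$ is already known to be a Lie algebra, no Jacobi-type verification is needed; the entire content is the compatibility of $E$ with the three terms defining $[\cdot,\cdot]_B$, and the only role of the Rota-Baxter hypothesis is to guarantee that $\g_B$ is a Lie algebra so that the phrase ``equivariant Nijenhuis operator on $\g_B$'' is meaningful.

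Concretely, I would expand
\[
E[x,y]_B = E[Bx,y]_\g + E[x,By]_\g + \lambda\, E[x,y]_\g,
\]
and treat each summand using the equivariance of $E$ on $(\g,[\cdot,\cdot]_\g)$ in the form $E[a,b]_\g=[Ea,b]_\g$. For the first term this gives $[E(Bx),y]_\g$, and here the hypothesis $E\circ B=B\circ E$ is used to rewrite $E(Bx)=B(Ex)$, producing $[B(Ex),y]_\g$. The second and third terms become $[Ex,By]_\g$ and $\lambda[Ex,y]_\g$ directly. Adding the three contributions, the right-hand side is precisely $[Ex,y]_B$, which is the required equivariance condition; since \eqref{equivariant-Nijenhuis} is symmetric in the two arguments, the companion identity $E[x,y]_B=[x,Ey]_B$ follows as well.

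I expect no real obstacle here: the argument is a three-line manipulation, with the commutation assumption $E\circ B=B\circ E$ serving exactly to convert equivariance on $(\g,[\cdot,\cdot]_\g)$ into equivariance on $(\g,[\cdot,\cdot]_B)$. Once this is in place, Lemma~\ref{lem:equiv-implies-nij} applied to the Lie algebra $\g_B$ shows that $E$ is a Nijenhuis operator for $[\cdot,\cdot]_B$, completing the proof.
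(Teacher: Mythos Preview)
Your proposal is correct and matches the paper's own proof essentially line for line: expand $E[x,y]_B$ into its three summands, apply the equivariance $E[a,b]_\g=[Ea,b]_\g$ to each, use $E\circ B=B\circ E$ on the first term, and recognize the result as $[Ex,y]_B$. The paper presents this as a single displayed computation without further commentary, so your additional remarks about Lemma~\ref{lem:equiv-implies-nij} and the symmetric companion identity are accurate but not strictly needed.
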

\begin{proof}
	Let $x,y\in\g$.
	Using $E\circ B = B\circ E$ and the equivariance of $E$ with respect to
	$[\cdot,\cdot]_\g$, we have
	\[
	E[x,y]_B
	= [E(Bx),y]_\g + [Ex,By]_\g + \lambda[Ex,y]_\g
	= [Ex,y]_B,
	\]
	so $E$ is an equivariant Nijenhuis operator on $[\cdot,\cdot]_B$.
\end{proof}
\smallskip

When a Rota-Baxter Lie algebra carries additional quadratic data, the link to
Lie bialgebras becomes particularly rigid.
A \emph{quadratic Rota-Baxter Lie algebra} consists of a Rota-Baxter operator
$B$ of weight $\lambda$ together with a nondegenerate invariant symmetric
bilinear form $S$ satisfying
\[
S(x,By)+S(Bx,y)+\lambda S(x,y)=0,
\qquad \forall x,y\in\g.
\]
In this setting, the bilinear form $S$ induces an identification
$\huaI_S:\g^*\to\g$, characterized by
\[
\langle \huaI_S^{-1}x,y\rangle := S(x,y),
\qquad \forall x,y\in\g.
\]
Following the construction of ~\cite{Lang-Sheng}, the operator
\[
r^{B,S}_+ := \frac{1}{\lambda}(B+\lambda\Id)\circ\huaI_S,
\qquad
r^{B,S}_+(\xi)=r^{B,S}(\xi,\cdot), \quad \forall\,\xi\in\g^*,
\]
defines a solution of the classical Yang-Baxter equation.
The associated coboundary Lie bialgebra $(\g,\g^*_{r^{B,S}})$ is
quasi-triangular and factorizable.
Moreover, this construction yields a one-to-one correspondence between
quadratic Rota-Baxter Lie algebras and factorizable Lie bialgebras.
\medskip

This correspondence provides the natural background for the study of
equivariant Nijenhuis operators in the Rota-Baxter setting, which we now
develop.
\begin{defi}\label{def:QENRB}
	Let $(\g,B,S)$ be a quadratic Rota-Baxter Lie algebra of weight $\lambda$,
	and let $E:\g\to\g$ be a linear operator.
	A \emph{quadratic equivariant Nijenhuis Rota-Baxter Lie (\ENLE-{\rm RB}) algebra}
	consists of the data $(\g,B,S,E)$ such that
	\begin{itemize}
		\item[(i)] $(\g,E,S)$ is a quadratic \ENL algebra;
		\item[(ii)] $E$ commutes with the Rota-Baxter operator $B$, namely
		$E\circ B=B\circ E$.
	\end{itemize}
\end{defi}
\begin{thm}\label{thm:FL}
	Let $(\g,B,S,E)$ be a quadratic \ENLE-{\rm RB} algebra of weight $\lambda\neq 0$.
	Then the triple $(\g,\g_{r^{B,S}}^*,E)$ is an \ENL bialgebra.
\end{thm}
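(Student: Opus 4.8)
The strategy is to verify, one by one, the three conditions in the definition of an \ENL bialgebra for the triple $(\g,\g^*_{r^{B,S}},E)$: that $(\g,\g^*_{r^{B,S}})$ is a Lie bialgebra, that $(\g,[\cdot,\cdot]_\g,E)$ is an \ENL algebra, and that $(\g^*_{r^{B,S}},[\cdot,\cdot]_{r^{B,S}},E^{*})$ is an \ENL algebra. The first is precisely the correspondence recalled before Definition~\ref{def:QENRB} (for a quadratic Rota--Baxter Lie algebra of weight $\lambda\neq0$, the associated coboundary Lie bialgebra $(\g,\g^*_{r^{B,S}})$ is quasi-triangular and factorizable; see \cite{Lang-Sheng}), and the second is Definition~\ref{def:QENRB}(i). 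So the whole problem reduces to the third claim, and since equivariant Nijenhuis operators are automatically Nijenhuis (Lemma~\ref{lem:equiv-implies-nij}), it suffices to show that $E^{*}$ is equivariant for the bracket $[\cdot,\cdot]_{r^{B,S}}$, i.e.\ $E^{*}[\xi,\eta]_{r^{B,S}}=[E^{*}\xi,\eta]_{r^{B,S}}$ for all $\xi,\eta\in\g^{*}$; the companion identity $E^{*}[\xi,\eta]_{r^{B,S}}=[\xi,E^{*}\eta]_{r^{B,S}}$ then follows automatically by antisymmetry of the bracket.

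The proof of this equivariance rests on two ingredients. First, since $(\g,E,S)$ is a quadratic \ENL algebra, Theorem~\ref{quadratic-Eder} gives $S^{\sharp}\circ E=E^{*}\circ S^{\sharp}$; rewriting this via $\huaI_S=(S^{\sharp})^{-1}$ yields $E\circ\huaI_S=\huaI_S\circ E^{*}$, and then combining it with $E\circ B=B\circ E$ from Definition~\ref{def:QENRB}(ii) and the formula $r^{B,S}_+=\tfrac1\lambda(B+\lambda\Id)\circ\huaI_S$ gives, in one line, the intertwining relation $E\circ r^{B,S}_+=r^{B,S}_+\circ E^{*}$. Second, by Corollary~\ref{thm:coadjoint}, $(\g^{*};E^{*},\ad^{*})$ is an \ENE-representation of $(\g,E)$, which means in particular that $E^{*}\circ\ad^{*}_z=\ad^{*}_{Ez}$ and that $E^{*}$ commutes with $\ad^{*}_z$ for every $z\in\g$. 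Starting from $[\xi,\eta]_{r^{B,S}}=\ad^{*}_{r^{B,S}_+\xi}\eta-\ad^{*}_{r^{B,S}_-\eta}\xi$, I apply $E^{*}$ and push it inside each term: for the first term $E^{*}(\ad^{*}_{r^{B,S}_+\xi}\eta)=\ad^{*}_{E(r^{B,S}_+\xi)}\eta=\ad^{*}_{r^{B,S}_+(E^{*}\xi)}\eta$, using the intertwining relation; for the second term $E^{*}(\ad^{*}_{r^{B,S}_-\eta}\xi)=\ad^{*}_{r^{B,S}_-\eta}(E^{*}\xi)$, since $E^{*}$ commutes with $\ad^{*}_{r^{B,S}_-\eta}$. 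The outcome is $\ad^{*}_{r^{B,S}_+(E^{*}\xi)}\eta-\ad^{*}_{r^{B,S}_-\eta}(E^{*}\xi)$, which is exactly $[E^{*}\xi,\eta]_{r^{B,S}}$, as desired.

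Putting this together, $E^{*}$ is an equivariant Nijenhuis operator on $\g^*_{r^{B,S}}$, so $(\g^*_{r^{B,S}},[\cdot,\cdot]_{r^{B,S}},E^{*})$ is an \ENL algebra, and with the Lie bialgebra structure on $(\g,\g^*_{r^{B,S}})$ and the \ENL algebra structure on $(\g,E)$ we conclude that $(\g,\g^*_{r^{B,S}},E)$ is an \ENL bialgebra. The computation itself is short; the one step that requires care is extracting $E\circ\huaI_S=\huaI_S\circ E^{*}$ from the $S$-symmetry of $E$ and noticing that $E\circ B=B\circ E$ is precisely what makes $r^{B,S}_+$ intertwine $E$ and $E^{*}$ — so both hypotheses (i) and (ii) of Definition~\ref{def:QENRB}, as well as the assumption $\lambda\neq0$ (through the form of $r^{B,S}_+$), are used in an essential way. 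One can verify $E\circ r^{B,S}_-=r^{B,S}_-\circ E^{*}$ in the same way using the quadratic compatibility between $S$ and $B$, but this is not needed for the argument above.
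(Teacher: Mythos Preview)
Your proof is correct, but it takes a somewhat different route from the paper's. Both arguments start by noting that $(\g,\g^{*}_{r^{B,S}})$ is a Lie bialgebra, that $(\g,E)$ is an \ENL algebra, and that the only nontrivial point is the equivariance of $E^{*}$ for $[\cdot,\cdot]_{r^{B,S}}$; both also extract $E\circ\huaI_S=\huaI_S\circ E^{*}$ from the $S$-symmetry of $E$. From there they diverge. The paper invokes the Lie algebra isomorphism $\Phi=\tfrac{1}{\lambda}\huaI_S:(\g^{*},[\cdot,\cdot]_{r^{B,S}})\to(\g,[\cdot,\cdot]_B)$ from \cite{Lang-Sheng}, observes that $\Phi$ intertwines $E^{*}$ and $E$, applies Proposition~\ref{rota-baxterLie-ave} to see that $E$ is equivariant on the descendent Lie algebra $(\g,[\cdot,\cdot]_B)$, and then transports this equivariance back along $\Phi$. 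You instead stay entirely on $\g^{*}$: you derive the intertwining $E\circ r^{B,S}_{+}=r^{B,S}_{+}\circ E^{*}$ directly and plug it, together with the coadjoint \ENE-representation identities from Corollary~\ref{thm:coadjoint}, into the explicit formula $[\xi,\eta]_{r^{B,S}}=\ad^{*}_{r^{B,S}_{+}\xi}\eta-\ad^{*}_{r^{B,S}_{-}\eta}\xi$. Your approach is more self-contained and avoids citing both the isomorphism $\Phi$ and Proposition~\ref{rota-baxterLie-ave}; the paper's approach is more conceptual, explaining \emph{why} the equivariance holds by identifying $\g^{*}_{r^{B,S}}$ with the descendent algebra $\g_B$ on which $E$ is already known to be equivariant.
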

\begin{proof}
	The quadratic Rota-Baxter Lie algebra $(\g,B,S)$ determines a factorizable
	Lie bialgebra $(\g,\g_{r^{B,S}}^*)$ associated to the classical $r$-matrix
	$r^{B,S}$.
	Since $(\g,B,S,E)$ is a quadratic \ENLE-{\rm RB} algebra, the operator $E$ is an
	equivariant Nijenhuis operator on $(\g,[\cdot,\cdot]_\g)$ and is
	$S$-symmetric.
	Moreover, $E$ commutes with the Rota-Baxter operator $B$.
	It therefore remains to show that the dual operator $E^*$ is equivariant on
	the Lie algebra $\g_{r^{B,S}}^*$.
	
	By the construction in ~\cite{Lang-Sheng}, the nondegenerate invariant bilinear form
	$S$ induces an identification $\huaI_S:\g^*\to\g$ with inverse
	$\huaI_S^{-1}:\g\to\g^*$, and the normalized map
	\[
	\Phi:=\frac{1}{\lambda}\huaI_S:\ (\g^*,[\cdot,\cdot]_{r^{B,S}})
	\longrightarrow (\g,[\cdot,\cdot]_B)
	\]
	is a Lie algebra isomorphism.
	Since $(\g,E,S)$ is quadratic, the operator $E$ is $S$-symmetric, meaning
	\[
	\langle \huaI_S^{-1}Ex,y\rangle
	=\langle \huaI_S^{-1}x,Ey\rangle,
	\qquad \forall\,x,y\in\g.
	\]
	Equivalently,
	\[
	\huaI_S^{-1}\circ E = E^*\circ \huaI_S^{-1},
	\qquad\text{or}\qquad
	E\circ \huaI_S = \huaI_S\circ E^*,
	\]
	and hence $\Phi\circ E^* = E\circ \Phi$.
	
	Since $E$ commutes with $B$, Proposition~\ref{rota-baxterLie-ave} implies that
	$E$ is an equivariant Nijenhuis operator on the descendant Lie algebra
	$(\g,[\cdot,\cdot]_B)$.
	We now show that $E^*$ is equivariant with respect to the bracket
	$[\cdot,\cdot]_{r^{B,S}}$.
	For all $\xi,\eta\in\g^*$, we compute
	\[
	\begin{aligned}
		\Phi\big(E^*[\xi,\eta]_{r^{B,S}}\big)
		&=E\Phi([\xi,\eta]_{r^{B,S}})
		=E[\Phi(\xi),\Phi(\eta)]_B \\
		&=[E\Phi(\xi),\Phi(\eta)]_B
		=[\Phi(E^*\xi),\Phi(\eta)]_B
		=\Phi\big([E^*\xi,\eta]_{r^{B,S}}\big).
	\end{aligned}
	\]
	Since $\Phi$ is injective, this yields
	\[
	E^*[\xi,\eta]_{r^{B,S}}=[E^*\xi,\eta]_{r^{B,S}}.
	\]
	Therefore $E^*$ is an equivariant operator on the Lie algebra
	$\g_{r^{B,S}}^*$, and consequently $(\g,\g_{r^{B,S}}^*,E)$ is an \ENL
	bialgebra.
\end{proof}
\begin{rmk}
We can also obtain \textup{ENL} bialgebras from a quadratic 
\textup{ENL}-{\rm RB} algebra $(\mathfrak g,B,S,E)$ of weight $\lambda=0$.
In this case, $r^{B,S}_+\in \mathfrak g\otimes \mathfrak g$ is defined by
$r^{B,S}_+ := B\circ \mathcal{I}_S,$
and the resulting Lie bialgebra $(\mathfrak g,\mathfrak g_{r^{B,S}}^*)$ is triangular.
\end{rmk}
\section{\EN $r$-matrices and the classical Yang-Baxter equation}
\label{sec:enr}

Coboundary Lie bialgebras provide one of the main sources of Lie bialgebra
structures, arising from solutions of the classical Yang-Baxter equation.
In the presence of an equivariant Nijenhuis operator, this construction becomes
more rigid: only those $r$-matrices compatible with the operator $E$ can
generate \ENL bialgebras.

In this section, we adapt the classical Yang-Baxter equation to the \ENL
setting. We introduce the notion of \EN $r$-matrices and show that they
parametrize coboundary ENL bialgebras, playing the same
generating role as classical $r$-matrices in the ordinary theory.

\medskip

Let $(\g,\Delta)$ be a Lie coalgebra. We use Sweedler's notation
\[
\Delta(x)=\sum x_{(1)}\otimes x_{(2)}, \qquad \forall x\in\g.
\]

\begin{defi}
	An \emph{equivariant Nijenhuis Lie (\ENLE) coalgebra} is a Lie coalgebra
	$(\g,\Delta)$ equipped with a linear operator $E:\g\to\g$ such that
	\begin{equation}\label{Lie-coalgebra-Equivariant-Nijenhuis}
		\Delta\circ E=(E\otimes \Id)\circ \Delta .
	\end{equation}
\end{defi}

Equivalently, for all $x\in\g$,
\[
\Delta(E x)=\sum E(x_{(1)})\otimes x_{(2)}.
\]
We denote such a structure by the triple $(\g,\Delta,E)$.

\begin{rmk}
	Condition~\eqref{Lie-coalgebra-Equivariant-Nijenhuis} expresses an
	equivariance requirement rather than the usual coalgebra morphism condition
	$\Delta\circ E=(E\otimes E)\circ\Delta$.
	This choice is dictated by duality with \ENL algebras
	and by compatibility with coboundary constructions arising from
	classical $r$-matrices.
\end{rmk}

\begin{defi}
	An \emph{\ENL bialgebra} $(\g,\Delta,E)$ is said to be \emph{coboundary} if
	there exists an element $r\in \g\otimes\g$ such that
	\begin{equation}\label{r-coboundary}
		\Delta=\Delta_r,
		\qquad
		\Delta_r(x):=(\ad_x\otimes \Id+\Id\otimes \ad_x)(r),
		\quad x\in\g.
	\end{equation}
\end{defi}

This is the classical notion of a coboundary Lie bialgebra.
In the equivariant Nijenhuis setting, the novelty lies in the additional
compatibility conditions imposed by the operator $E$ on the corresponding
$r$-matrix.

\begin{pro}\label{r-matrix-condition}
	Let $(\g,[\cdot,\cdot]_{\g},E)$ be an \ENL algebra and let $r\in\g\otimes\g$.
	Assume that the cobracket $\Delta=\Delta_r$ defined by
	\eqref{r-coboundary} endows $(\g,\g^*)$ with a coboundary Lie bialgebra structure.
	Then $(\g^*,[\cdot,\cdot]_{\g^*},E^*)$ is an \ENL algebra if and only if, for
	all $x\in\g$,
	\begin{equation}\label{Equ-r-matrix-1-strong}
		(\ad_{E x}\otimes \Id+\Id\otimes \ad_{E x})(r)
		=
		(E\otimes \Id)\,(\ad_x\otimes \Id+\Id\otimes \ad_x)(r).
	\end{equation}
\end{pro}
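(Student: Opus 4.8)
The plan is to exploit the fact that, by Proposition~\ref{r-matrix-condition}'s hypothesis, $(\g,\g^*)$ is already a coboundary Lie bialgebra with cobracket $\Delta_r$, so the dual bracket $[\cdot,\cdot]_{\g^*}$ is the one induced by $\Delta_r$, namely $\langle[\xi,\eta]_{\g^*},x\rangle=\langle\xi\otimes\eta,\Delta_r(x)\rangle$ for all $x\in\g$, $\xi,\eta\in\g^*$. The statement to be proved is a biconditional, and the natural dictionary is: $E^*$ is an equivariant Nijenhuis operator on $(\g^*,[\cdot,\cdot]_{\g^*})$ if and only if the triple $(\g,\Delta_r,E)$ is an \ENLE coalgebra in the sense of~\eqref{Lie-coalgebra-Equivariant-Nijenhuis}. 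This equivalence is essentially formal by dualization: the condition $E^*\circ\ad^*_x=\ad^*_x\circ E^*$ on $\g^*$ pairs, under the canonical pairing, precisely with the coalgebra-side equivariance $\Delta\circ E=(E\otimes\Id)\circ\Delta$. So the first step is to reduce the claim to showing that~\eqref{Lie-coalgebra-Equivariant-Nijenhuis} holds for $\Delta_r$ if and only if~\eqref{Equ-r-matrix-1-strong} holds.

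Carrying this out, I would compute $\Delta_r\circ E$ and $(E\otimes\Id)\circ\Delta_r$ directly from the defining formula~\eqref{r-coboundary}. On the one hand,
\[
(E\otimes\Id)\,\Delta_r(x)=(E\otimes\Id)(\ad_x\otimes\Id+\Id\otimes\ad_x)(r),
\]
which is exactly the right-hand side of~\eqref{Equ-r-matrix-1-strong}. On the other hand,
\[
\Delta_r(Ex)=(\ad_{Ex}\otimes\Id+\Id\otimes\ad_{Ex})(r),
\]
which is the left-hand side of~\eqref{Equ-r-matrix-1-strong}. Hence~\eqref{Lie-coalgebra-Equivariant-Nijenhuis} for $\Delta_r$ is literally identity~\eqref{Equ-r-matrix-1-strong}. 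It then remains to connect~\eqref{Lie-coalgebra-Equivariant-Nijenhuis} with the equivariance of $E^*$ on $\g^*$: for $\xi,\eta\in\g^*$ and $x\in\g$, one has
\[
\langle E^*[\xi,\eta]_{\g^*},x\rangle=\langle[\xi,\eta]_{\g^*},Ex\rangle=\langle\xi\otimes\eta,\Delta_r(Ex)\rangle,
\]
while
\[
\langle[E^*\xi,\eta]_{\g^*},x\rangle=\langle E^*\xi\otimes\eta,\Delta_r(x)\rangle=\langle\xi\otimes\eta,(E\otimes\Id)\Delta_r(x)\rangle.
\]
Since $\g^*\otimes\g^*$ separates points of $\g\otimes\g$ (finite dimensions), equality of these for all $\xi,\eta,x$ is equivalent to $\Delta_r(Ex)=(E\otimes\Id)\Delta_r(x)$ for all $x$, i.e. to~\eqref{Lie-coalgebra-Equivariant-Nijenhuis}. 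One should also note that $E^*$ is automatically Nijenhuis once this equivariance holds, by Lemma~\ref{lem:equiv-implies-nij}, so checking the equivariance identity is all that is needed to conclude $(\g^*,[\cdot,\cdot]_{\g^*},E^*)$ is an \ENL algebra.

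The only mild subtlety — and the step I would be most careful about — is the direction "$E^*$ equivariant on $\g^*$" using the coantisymmetry of $\Delta_r$: one must check that testing equivariance with the first tensor slot (i.e. $[E^*\xi,\eta]_{\g^*}$) is equivalent to testing it with the second slot, so that no hidden second condition $(\Id\otimes E)\Delta_r=\Delta_r\circ E$ is implicitly required. Here the symmetric form of the equivariance condition~\eqref{equivariant-Nijenhuis} (equivalently $E[x,y]_\g=[Ex,y]_\g=[x,Ey]_\g$) on the \ENL algebra side and the coantisymmetry $\Delta_r=-\tau\circ\Delta_r$ together guarantee that $(E\otimes\Id)\Delta_r=\Delta_r\circ E$ and $(\Id\otimes E)\Delta_r=\Delta_r\circ E$ are equivalent; so a single identity suffices and~\eqref{Equ-r-matrix-1-strong} is exactly the right one. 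Assembling these observations gives the biconditional of Proposition~\ref{r-matrix-condition}.
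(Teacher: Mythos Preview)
Your proposal is correct and follows essentially the same approach as the paper: reduce the equivariance of $E^*$ on $(\g^*,[\cdot,\cdot]_{\g^*})$ to the coalgebra identity $\Delta_r\circ E=(E\otimes\Id)\circ\Delta_r$ via the duality pairing, and then observe that for $\Delta=\Delta_r$ this identity is literally \eqref{Equ-r-matrix-1-strong}. Your write-up is in fact more careful than the paper's on two points---you explicitly invoke Lemma~\ref{lem:equiv-implies-nij} to pass from equivariance to the full \ENL structure, and you address the first-slot/second-slot issue via coantisymmetry of $\Delta_r$---both of which the paper leaves implicit.
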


\begin{proof}
	Recall that the Lie bracket on $\g^*$ induced by $\Delta$ is given by
	\begin{equation}\label{dual-bracket}
		\langle [\alpha,\beta]_{\g^*},x\rangle
		=
		\langle \alpha\otimes\beta,\Delta(x)\rangle,
		\qquad \forall \alpha,\beta\in\g^*,\ x\in\g.
	\end{equation}
	Hence $(\g^*,[\cdot,\cdot]_{\g^*},E^*)$ is an \ENL algebra if and only if
	\begin{equation}\label{ENL-on-dual}
		[E^*\alpha,\beta]_{\g^*}=E^*[\alpha,\beta]_{\g^*},
		\qquad \forall\,\alpha,\beta\in\g^*.
	\end{equation}
	
	Pairing \eqref{dual-bracket} with $E x\in\g$ and using
	\eqref{ENL-on-dual}, we obtain
	\[
	\langle \alpha\otimes\beta,\Delta(E x)\rangle
	=
	\langle \alpha\otimes\beta,(E\otimes\Id)\Delta(x)\rangle,
	\qquad \forall\,\alpha,\beta\in\g^*,\ x\in\g,
	\]
	which is equivalent to the equivariance condition
	$$\Delta\circ E=(E\otimes\Id)\circ\Delta\,.$$
	For $\Delta=\Delta_r$ given by \eqref{r-coboundary}, this identity is
	equivalent to \eqref{Equ-r-matrix-1-strong} by a direct computation.
\end{proof}

Given an \ENL algebra $(\g,[\cdot,\cdot]_{\g},E)$ and an element $r\in\g\otimes\g$,
the coboundary formula~\eqref{r-coboundary} defines a canonical candidate
cobracket $\Delta_r$.
Proposition~\ref{r-matrix-condition} yields the following results.

\begin{defi}
	Let $(\g,[\cdot,\cdot]_{\g},E)$ be an \ENL algebra.
	An element $r\in\g\otimes\g$ is said to satisfy the \emph{classical
		Yang-Baxter equation in the equivariant Nijenhuis setting} if:
	\begin{itemize}
		\item[\rm(i)]
		$r$ satisfies the classical Yang-Baxter equation
		\[
		\lcf r,r\rcf =0;
		\]
		\item[\rm(ii)]
		$r$ is compatible with the equivariant Nijenhuis operator $E$, that is,
		\begin{equation}\label{Equ-LIE-Yang-Baxter-2}		
		(\Id\otimes E-E\otimes\Id)(r)=0.
	\end{equation}
	\end{itemize}
	Such an element $r$ is called an \emph{\EN $r$-matrix}.
\end{defi}

\begin{thm}\label{Equ-bialgebra-cybe}
	Let $(\g,[\cdot,\cdot]_{\g},E)$ be an \ENL algebra and let $r\in\g\otimes\g$
	be an \EN $r$--matrix.
	Assume moreover that the symmetric part of $r$ is $\ad$-invariant, namely
	\[
	(\ad_x\otimes\Id+\Id\otimes\ad_x)(r+\sigma (r))=0,
	\qquad \forall x\in\g.
	\]
	Then the cobracket $\Delta_r$ defined by \eqref{r-coboundary} endows $\g$ with
	a coboundary \ENL bialgebra structure.
\end{thm}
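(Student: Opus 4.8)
The plan is to combine Proposition~\ref{r-matrix-condition} with the classical coboundary Lie bialgebra theory already recalled in Section~\ref{sec:bia2}. First I would invoke the classical result: since $r$ satisfies the classical Yang--Baxter equation $\lcf r,r\rcf=0$ and the symmetric part $r+r^{21}$ is $\ad$-invariant, the cobracket $\Delta_r$ defined by \eqref{r-coboundary} is a $1$-cocycle in the Chevalley--Eilenberg cohomology of $\g$ with coefficients in $\g\otimes\g$, and hence $(\g,\Delta_r)$ is a coboundary Lie bialgebra; equivalently $(\g^*,[\cdot,\cdot]_{\g^*})$ is a Lie algebra under the induced dual bracket $[\xi,\eta]_r = \ad^*_{r_+\xi}\eta - \ad^*_{r_-\eta}\xi$. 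This gives the underlying Lie bialgebra structure.

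The remaining task is to promote this to an \ENL bialgebra, i.e.\ to verify that $(\g,[\cdot,\cdot]_\g,E)$ and $(\g^*,[\cdot,\cdot]_{\g^*},E^*)$ are both \ENL algebras. The first is given by hypothesis. For the second, I would apply Proposition~\ref{r-matrix-condition}: it suffices to check the equivariance identity \eqref{Equ-r-matrix-1-strong}, namely
\[
(\ad_{Ex}\otimes\Id+\Id\otimes\ad_{Ex})(r)
=(E\otimes\Id)(\ad_x\otimes\Id+\Id\otimes\ad_x)(r),
\qquad\forall x\in\g.
\]
The key step is to reduce this to the two hypotheses on $r$: the \EN $r$-matrix compatibility $(\Id\otimes E - E\otimes\Id)(r)=0$ and the equivariance of $E$ on $\g$. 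Using equivariance of $E$ (Definition: $E\circ\ad_x = \ad_x\circ E = \ad_{Ex}$ by \eqref{equivariant-Nijenhuis} and Remark~\ref{equiv-ad}), the left-hand side becomes
\[
(\ad_x\circ E\otimes\Id)(r)+(\Id\otimes \ad_x\circ E)(r)
=(E\otimes\Id)(\ad_x\otimes\Id)(r)+(\Id\otimes\ad_{Ex})(r),
\]
where in the first summand I moved $E$ past $\ad_x$ and then pulled it out front using that $E$ and $\ad_x$ commute (so $(\ad_x E\otimes\Id)=(E\otimes\Id)(\ad_x\otimes\Id)$ is false in general — one must be careful: $\ad_x E = E\ad_x$, so $(\ad_x E\otimes\Id)(r) = (E\ad_x\otimes\Id)(r)=(E\otimes\Id)(\ad_x\otimes\Id)(r)$, which is valid). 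For the second summand, I would use \eqref{Equ-LIE-Yang-Baxter-2} in the form $(E\otimes\Id)(r)=(\Id\otimes E)(r)$ to rewrite $(\Id\otimes\ad_{Ex})(r)=(\Id\otimes\ad_x E)(r)=(\Id\otimes\ad_x)(\Id\otimes E)(r)=(\Id\otimes\ad_x)(E\otimes\Id)(r)=(E\otimes\Id)(\Id\otimes\ad_x)(r)$. Adding the two pieces yields exactly the right-hand side of \eqref{Equ-r-matrix-1-strong}.

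Having verified \eqref{Equ-r-matrix-1-strong}, Proposition~\ref{r-matrix-condition} gives that $(\g^*,[\cdot,\cdot]_{\g^*},E^*)$ is an \ENL algebra, and together with the hypothesis on $(\g,E)$ this means $(\g,\g^*,E)$ — equivalently $(\g,\Delta_r,E)$ — is an \ENL bialgebra, which is coboundary by construction since $\Delta=\Delta_r$. I expect the main obstacle to be purely bookkeeping: carefully tracking which tensor leg each copy of $E$ or $\ad_x$ acts on, and correctly using the two distinct compatibility inputs (equivariance of $E$ on $\g$ versus the $r$-matrix symmetry condition \eqref{Equ-LIE-Yang-Baxter-2}) in the right places. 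No conceptually new estimate or construction is required beyond Proposition~\ref{r-matrix-condition} and the classical coboundary theory.
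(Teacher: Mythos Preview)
Your proposal is correct and follows essentially the same route as the paper's proof. Both arguments first invoke the classical coboundary Lie bialgebra theory (using $\lcf r,r\rcf=0$ and $\ad$-invariance of $r+r^{21}$), and then verify the identity $\Delta_r(Ex)=(E\otimes\Id)\Delta_r(x)$ by splitting $(\ad_{Ex}\otimes\Id+\Id\otimes\ad_{Ex})(r)$, using $\ad_{Ex}=E\circ\ad_x=\ad_x\circ E$ on each summand, and applying $(\Id\otimes E)(r)=(E\otimes\Id)(r)$ on the second summand; the only cosmetic difference is that you route through Proposition~\ref{r-matrix-condition} explicitly while the paper states the coalgebra equivariance directly.
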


\begin{proof}
	By the $\ad$-invariance of $r+\sigma (r)$ together with the classical
	Yang-Baxter equation $\lcf r,r\rcf=0$, the cobracket $\Delta_r$ defines a
	coboundary Lie bialgebra structure on $\g$ in the usual sense.
	
	It remains to verify the \ENL compatibility condition
	$\Delta_r\circ E=(E\otimes\Id)\circ\Delta_r$.
	Using the \ENL algebra identity $[Ex,y]_{\g}=E([x,y]_{\g})$, we compute for
	$x\in\g$,
	\[
	\Delta_r(Ex)
	=(\ad_{Ex}\otimes\Id+\Id\otimes\ad_{Ex})(r)
	=(E\otimes\Id)(\ad_x\otimes\Id+\Id\otimes\ad_x)(r)
	=(E\otimes\Id)\Delta_r(x),
	\]
	where the second equality follows from the \EN $r$-matrix condition
	$(\Id\otimes E-E\otimes\Id)(r)=0$.
	Hence $(\g,\Delta_r,E)$ is an \ENL bialgebra, which is coboundary by
	construction.
\end{proof}

\begin{cor}
	Let $(\g,[\cdot,\cdot]_{\g},E)$ be an \ENL algebra and let $r\in\g\otimes\g$
	be a \emph{triangular} \EN $r$--matrix, i.e.\ $r+\sigma (r)=0$.
	Then $r$ canonically generates a coboundary \ENL bialgebra structure on
	$\g$.
\end{cor}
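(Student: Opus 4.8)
The plan is to obtain this statement as an immediate specialization of Theorem~\ref{Equ-bialgebra-cybe}. The only hypothesis of that theorem that is not already built into the definition of an \EN $r$-matrix is the $\ad$-invariance of the symmetric part $r+r^{21}$, and the point is simply that triangularity renders this condition vacuous.

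First I would observe that a triangular \EN $r$-matrix is, in particular, an \EN $r$-matrix, so it satisfies the classical Yang--Baxter equation $\lcf r,r\rcf=0$ together with the compatibility condition $(\Id\otimes E-E\otimes\Id)(r)=0$. Second, since $r+r^{21}=0$ by assumption, the expression $(\ad_x\otimes\Id+\Id\otimes\ad_x)(r+r^{21})$ vanishes for every $x\in\g$, so the $\ad$-invariance hypothesis of Theorem~\ref{Equ-bialgebra-cybe} holds trivially. Hence all hypotheses of Theorem~\ref{Equ-bialgebra-cybe} are met, and we conclude that the cobracket $\Delta_r$ defined by \eqref{r-coboundary} endows $\g$ with a coboundary \ENL bialgebra structure $(\g,\Delta_r,E)$.

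No genuine obstacle arises here: the entire content is already established in Theorem~\ref{Equ-bialgebra-cybe}, and the corollary merely records that the skew-symmetric (triangular) case automatically fulfils the invariance requirement, exactly as in the classical theory of triangular Lie bialgebras. If one wishes to make the proof self-contained rather than a one-line invocation, the alternative is to check directly, via Proposition~\ref{r-matrix-condition}, that the dual operator $E^*$ is equivariant for the induced bracket on $\g^*$: in the triangular case $r_-=-r_+^*=r_+$, so the bracket on $\g^*$ simplifies, and the \EN $r$-matrix condition $(\Id\otimes E-E\otimes\Id)(r)=0$ is precisely what is needed to verify \eqref{Equ-r-matrix-1-strong}, giving an independent verification of the same conclusion.
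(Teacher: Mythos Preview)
Your proposal is correct and matches the paper's intended argument: the corollary is stated without proof precisely because it is the immediate specialization of Theorem~\ref{Equ-bialgebra-cybe} to the case $r+r^{21}=0$, where the $\ad$-invariance hypothesis becomes vacuous. Your main argument is exactly this observation, so there is nothing to add.
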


\section{\ENE-Relative Rota-Baxter operators on \ENL algebras}
\label{sec.6}

Relative Rota-Baxter operators play a central role in Lie algebra theory.
They underlie the construction of descendant Lie algebras, matched pairs,
and provide a systematic source of solutions of the classical
Yang-Baxter equation on semidirect products. See also \cite{G3,Goncharov} for more applications.

In Section~\ref{sec:enr}, we showed that \EN $r$-matrices generate
coboundary \ENL bialgebras.
In the present section, we explain how such $r$-matrices arise naturally
from relative Rota-Baxter operators compatible with an equivariant
Nijenhuis operator.
After recalling the classical notion of a relative Rota-Baxter operator,
we introduce its \ENL analogue and show how the equivariance condition
imposes additional rigidity on the induced algebraic structures.

\medskip

Let $(\g,[\cdot,\cdot]_{\g})$ be a Lie algebra and let $(W;\rho)$ be a
representation of $\g$.
A linear map $K:W\to\g$ is called a \emph{relative Rota--Baxter operator}
(or an $\mathcal O$-operator) \cite{Ku} on $\g$ with respect to $(W,\rho)$ if
\[
[Ku,Kv]_{\g}
=
K\bigl(\rho(Ku)v-\rho(Kv)u\bigr),
\qquad \forall\,u,v\in W.
\]

We now adapt this notion to the equivariant Nijenhuis setting.
\begin{defi}
Let $(\mathfrak g,[\cdot,\cdot]_{\mathfrak g},E)$ be an \textup{ENL} algebra and
$(W;T,\rho)$ be an \textup{EN}-representation of $(\mathfrak g,E)$.
A linear map $K:W\to\mathfrak g$ is called an
\emph{\textup{EN}-relative Rota--Baxter operator}
on $(\mathfrak g,[\cdot,\cdot]_{\mathfrak g},E)$ with respect to
$(W;T,\rho)$ if $K$ is a relative Rota--Baxter operator on $\mathfrak g$
and $K$ is compatible with the equivariant Nijenhuis operators, in the sense that
\[
E\circ K = K\circ T.
\]
If $(W;T,\rho)=(\mathfrak g;E,\operatorname{ad})$, then $K$ is called an
\emph{\textup{EN} Rota--Baxter operator of weight $0$} on
$(\mathfrak g,[\cdot,\cdot]_{\mathfrak g},E)$.
\end{defi}
\begin{defi}
	Let $(\g,[\cdot,\cdot]_{\g},E)$ be an \ENL algebra and
	$(W;T,\rho)$ be an \ENE-representation of $(\g,E)$.
	A linear map $K:W\to\g$ is called an \emph{\ENE-relative Rota--Baxter operator}
	on $(\g,[\cdot,\cdot]_{\g},E)$ with respect to $(W;T,\rho)$ if $K$ is a relative Rota--Baxter operator on $\g$ and
   $K$ is compatible with the equivariant Nijenhuis operators,
		in the sense that
\[
		E\circ K=K\circ T.
		\]
	If $(W;T,\rho)=(\g;E,\ad)$, then $K$ is called an
	\emph{\EN Rota-Baxter operator of weight $0$} on
	$(\g,[\cdot,\cdot]_{\g},E)$.
	\end{defi}
\begin{rmk}
	The above definition extends the classical notion of
	Rota-Baxter operators recalled in Section~\ref{sec:bia2}.
	\ENE-relative Rota-Baxter operators may be viewed as
	representation-level analogues of Rota-Baxter operators, where the
	adjoint \ENE-representation is replaced by an arbitrary \ENE-representation,
	together with compatibility with the equivariant Nijenhuis operator.
	In particular, such operators provide a systematic mechanism for
	transporting \ENL structures from a given \ENL algebra to its
	representation spaces.
\end{rmk}

\begin{pro}\label{descendant-AveLie}
	Let $(\g,[\cdot,\cdot]_{\g},E)$ be an \ENL algebra and
	$K:W\to\g$ be an \ENE-relative Rota-Baxter operator with respect to the
	\ENE-representation $(W;T,\rho)$.
	Define a bilinear operation on $W$ by
	\begin{equation}\label{relative-Rotabaxter-bracket}
		[u,v]_K := \rho(Ku)v-\rho(Kv)u,
		\qquad \forall\,u,v\in W.
	\end{equation}
	Then $(W,[\cdot,\cdot]_K,T)$ is an \ENL algebra, called the
	descendant \ENL algebra associated with $K$.
\end{pro}

\begin{proof}
	By the classical theory of relative Rota--Baxter operators,
	$(W,[\cdot,\cdot]_K)$ is a Lie algebra.
	It remains to verify that the operator $T:W\to W$ is equivariant with
	respect to the bracket $[\cdot,\cdot]_K$.
	
	Using the equivariance of the representation $(W;T,\rho)$ and the
	compatibility condition $E\circ K=K\circ T$, we compute, for all
	$u,v\in W$,
	\begin{align*}
		T[u,v]_K
		&=T\bigl(\rho(Ku)v\bigr)-T\bigl(\rho(Kv)u\bigr)\\
		&=\rho(E(Ku))v-\rho(E(Kv))u\\
		&=\rho(K(Tu))v-\rho(K(Tv))u\\
		&=[Tu,v]_K.
	\end{align*}
	By symmetry of the above argument, one also has
	$[Tu,v]_K=[u,Tv]_K$.
	Hence $T$ is equivariant with respect to $[\cdot,\cdot]_K$, and
	$(W,[\cdot,\cdot]_K,T)$ is an \ENL algebra.
\end{proof}
\begin{cor}\label{cor:K-hom}
	Let $(\g,[\cdot,\cdot]_{\g},E)$ be an \ENL algebra and
	$K:W\to\g$ be an \ENE-relative Rota-Baxter  operator with respect to an
	\ENE-representation $(W;T,\rho)$.
	Then $K$ is an \ENE-homomorphism of \ENL algebras from the
	descendant \ENL algebra $(W,[\cdot,\cdot]_K,T)$ to
	$(\g,[\cdot,\cdot]_{\g},E)$.
\end{cor}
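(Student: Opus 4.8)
The plan is to unpack the definition of an \ENE-homomorphism and check its two defining conditions directly. Recall that an \ENE-homomorphism from $(W,[\cdot,\cdot]_K,T)$ to $(\g,[\cdot,\cdot]_{\g},E)$ is a Lie algebra homomorphism $K:W\to\g$ satisfying the intertwining relation $K\circ T=E\circ K$. The second of these is immediate: it is precisely condition~(ii) in the definition of an \ENE-relative Rota-Baxter operator, so nothing is to be proved there.

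For the homomorphism property, I would argue exactly as in the classical theory of relative Rota-Baxter operators. Using the descendent bracket \eqref{relative-Rotabaxter-bracket} and then the relative Rota-Baxter identity (condition~(i)), one computes, for all $u,v\in W$,
\[
K[u,v]_K
=
K\bigl(\rho(Ku)v-\rho(Kv)u\bigr)
=
[Ku,Kv]_{\g},
\]
which is precisely the statement that $K$ preserves the brackets. Since $(W,[\cdot,\cdot]_K,T)$ is an \ENL algebra by Proposition~\ref{descendent-AveLie}, and $(\g,[\cdot,\cdot]_{\g},E)$ is an \ENL algebra by hypothesis, the two computations together show that $K$ is an \ENE-homomorphism of \ENL algebras.

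There is no real obstacle here: both required identities are already available, one from the defining equation of a relative Rota-Baxter operator and the other as an explicit hypothesis on $K$. The only point worth stating carefully is that the source object $(W,[\cdot,\cdot]_K,T)$ is a bona fide \ENL algebra, which is exactly what Proposition~\ref{descendent-AveLie} provides; the corollary is then a direct consequence.
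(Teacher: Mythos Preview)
Your proposal is correct and matches the paper's treatment: the paper states this corollary without proof, as an immediate consequence of the definitions and Proposition~\ref{descendent-AveLie}, and your argument spells out exactly the two required checks (the relative Rota--Baxter identity gives $K[u,v]_K=[Ku,Kv]_\g$, and condition~(ii) gives $E\circ K=K\circ T$).
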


It is a classical result that a relative Rota-Baxter operator
$K:W\to\g$ induces a matched pair of Lie algebras, \cite{Ku},
\[
\bigl((\g,[\cdot,\cdot]_{\g}),\,(W,[\cdot,\cdot]_K),\,\rho,\,\mu\bigr),
\]
where the Lie bracket on $W$ is given by
\eqref{relative-Rotabaxter-bracket}, and the action
$\mu:W\to\gl(\g)$ is defined by
\begin{equation}\label{mu-matched}
	\mu(u)x
	=
	K(\rho(x)u)-[x,Ku]_{\g},
	\qquad \forall\,u\in W,\ x\in\g .
\end{equation}

The \ENE-compatibility conditions imposed on $K$ and on the
representation $(W;T,\rho)$ ensure that this matched pair
construction lifts naturally to the equivariant Nijenhuis setting,
yielding a matched pair of \ENL algebras.

\begin{pro}\label{construction-matched-pair}
	Let $(\g,[\cdot,\cdot]_{\g},E)$ be an \ENL algebra and
	$K:W\to\g$ be an \ENE-relative Rota-Baxter operator with respect to the \ENE-representation $(W;T,\rho)$.
	Let $(W,[\cdot,\cdot]_K,T)$ be the associated descendant \ENL algebra.
	Then $((\g,E),(W,T);\rho,\mu)$ forms a matched pair of \ENL algebras, where $\mu$ is defined by \eqref{mu-matched}.
\end{pro}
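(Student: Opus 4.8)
The plan is to reduce the statement to the classical matched-pair construction and then check the two extra equivariance identities isolated in Proposition~\ref{equiv-con}. First I would invoke the classical result of \cite{Ku}: since $K:W\to\g$ is a relative Rota-Baxter operator on $(\g,[\cdot,\cdot]_{\g})$ with respect to $(W;\rho)$, the quadruple $\bigl((\g,[\cdot,\cdot]_{\g}),(W,[\cdot,\cdot]_K);\rho,\mu\bigr)$, with $[\cdot,\cdot]_K$ given by \eqref{relative-Rotabaxter-bracket} and $\mu$ by \eqref{mu-matched}, is a matched pair of Lie algebras. Moreover $(\g,[\cdot,\cdot]_{\g},E)$ is an \ENL algebra by hypothesis, and $(W,[\cdot,\cdot]_K,T)$ is an \ENL algebra by Proposition~\ref{descendent-AveLie}. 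Hence, by Proposition~\ref{equiv-con}, it suffices to verify the identities \eqref{Equ-1}--\eqref{Equ-2} for the pair $(\rho,\mu)$ with the operator on $W$ taken to be $T$.

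The identity \eqref{Equ-1}, namely $T(\rho(x)u)=\rho(Ex)u=\rho(x)(Tu)$ for all $x\in\g$, $u\in W$, is exactly the \ENE-representation condition \eqref{representation-equivariant-Liealg}, so it holds by assumption. For \eqref{Equ-2} I would compute directly from $\mu(u)x=K(\rho(x)u)-[x,Ku]_{\g}$. Applying $E$, using $E\circ K=K\circ T$ on the first summand and the equivariance of $E$ on $\g$ together with $E\circ K=K\circ T$ on the second, gives
\[
E(\mu(u)x)=K\bigl(T(\rho(x)u)\bigr)-[x,K(Tu)]_{\g}.
\]
Rewriting $T(\rho(x)u)$ via \eqref{representation-equivariant-Liealg} in the two available ways then yields the two halves of \eqref{Equ-2}: taking $T(\rho(x)u)=\rho(x)(Tu)$ produces $E(\mu(u)x)=K(\rho(x)(Tu))-[x,K(Tu)]_{\g}=\mu(Tu)x$, while taking $T(\rho(x)u)=\rho(Ex)u$ and using $[x,K(Tu)]_{\g}=[x,E(Ku)]_{\g}=[Ex,Ku]_{\g}$ produces $E(\mu(u)x)=K(\rho(Ex)u)-[Ex,Ku]_{\g}=\mu(u)(Ex)$. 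Thus $E(\mu(u)x)=\mu(Tu)x=\mu(u)(Ex)$, which is precisely \eqref{Equ-2}.

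With \eqref{Equ-1} and \eqref{Equ-2} established, Proposition~\ref{equiv-con} (equivalently Remark~\ref{equiv}) gives that $((\g,E),(W,T);\rho,\mu)$ is a matched pair of \ENL algebras. I do not expect a genuine obstacle here; the only delicate point is the bookkeeping in the verification of \eqref{Equ-2}, where the three compatibility data — equivariance of $E$ on $\g$, the \ENE-representation condition for $(W;T,\rho)$, and the intertwining $E\circ K=K\circ T$ — must each be applied once and in the right order, after which the skew-symmetry built into \eqref{mu-matched} and the underlying classical matched-pair identities do the rest.
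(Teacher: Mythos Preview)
Your proposal is correct and follows essentially the same route as the paper: invoke the classical matched-pair result for relative Rota--Baxter operators, then verify the two equivariance identities \eqref{Equ-1}--\eqref{Equ-2} using the \ENE-representation condition and $E\circ K=K\circ T$. Your write-up is in fact more explicit than the paper's, which simply asserts that ``a direct computation using \eqref{mu-matched} evaluated at $Ex$'' yields $\mu(u)(Ex)=\mu(Tu)x=E(\mu(u)x)$ without spelling out the steps you carry through.
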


\begin{proof}
	By the classical theory of relative Rota--Baxter operators, the quadruple
	\[
	\bigl((\g,[\cdot,\cdot]_{\g}),\,(W,[\cdot,\cdot]_K),\,\rho,\,\mu\bigr)
	\]
	is a matched pair of Lie algebras. It remains to check compatibility with the \EN operators.
	Since $(W;T,\rho)$ is an \ENE-representation of $(\g,E)$ and $K$ is \ENE-compatible, i.e., $E\circ K=K\circ T$, a direct computation using \eqref{mu-matched} evaluated at $Ex$ shows that for all $x\in\g$ and $u\in W$,
	\[
	\mu(u)(Ex) = \mu(Tu)x = E(\mu(u)x).
	\]
	This verifies that $(\g;E,\mu)$ is an \ENE-representation of the descendant \ENL algebra $(W,[\cdot,\cdot]_K,T)$.
	Hence $((\g,E),(W,T);\rho,\mu)$ is indeed a matched pair of \ENL algebras.
\end{proof}
\medskip

Recall that for a skew-symmetric tensor $r\in\g\otimes\g$, the map
$r_+:\g^*\to\g$ is a relative Rota-Baxter operator on the Lie algebra $\g$ with
respect to the coadjoint representation $(\g^*,\ad^*)$ if and only if $r$
satisfies the classical Yang-Baxter equation $\lcf r,r\rcf=0$. In the \ENL setting,
\ENE-relative Rota-Baxter operators provide an alternative description of solutions
to the classical Yang-Baxter equation, as we now explain.

\begin{pro}\label{pro:rrr}
	Let $(\g,[\cdot,\cdot]_{\g},E)$ be an \ENL algebra and $r\in\g\otimes\g$ be
	skew-symmetric. Define the linear map $r_+:\g^*\to\g$ by
	\[
	\langle \eta, r_+(\xi)\rangle := \langle \xi\otimes\eta, r\rangle,
	\qquad \forall\,\xi,\eta\in\g^*.
	\]
	Then $r_+$ is an \ENE-relative Rota-Baxter operator on
	$(\g,[\cdot,\cdot]_{\g},E)$ with respect to the coadjoint \ENE-representation
	$(\g^*;E^*,\ad^*)$ if and only if $r$ is an \EN $r$-matrix.
\end{pro}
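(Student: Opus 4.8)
The plan is to split both sides of the asserted equivalence into a linear and a quadratic condition and to match them separately. By definition, $r_+$ is an \ENE-relative Rota-Baxter operator on $(\g,[\cdot,\cdot]_{\g},E)$ with respect to $(\g^*;E^*,\ad^*)$ if and only if two conditions hold: (i$'$) $r_+$ is an ordinary relative Rota-Baxter operator on $\g$ with respect to the coadjoint representation $(\g^*,\ad^*)$; and (ii$'$) $E\circ r_+=r_+\circ E^*$. On the other side, $r$ is an \EN $r$-matrix if and only if (i) $\lcf r,r\rcf=0$ and (ii) $(\Id\otimes E-E\otimes\Id)(r)=0$. First I would observe that $(\g^*;E^*,\ad^*)$ is indeed the coadjoint \ENE-representation of $(\g,E)$ by Corollary~\ref{thm:coadjoint}, so that the notion in (i$'$)--(ii$'$) is meaningful; it then suffices to prove (i$'$)$\Leftrightarrow$(i) and (ii$'$)$\Leftrightarrow$(ii) and to conjoin them.

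The equivalence (i$'$)$\Leftrightarrow$(i) is precisely the classical fact recalled immediately before the proposition: for a skew-symmetric $r\in\g\otimes\g$, the map $r_+$ is a relative Rota-Baxter operator with respect to the coadjoint representation if and only if $r$ satisfies the classical Yang-Baxter equation $\lcf r,r\rcf=0$. This is where the skew-symmetry hypothesis on $r$ enters, and no new argument is needed.

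For (ii$'$)$\Leftrightarrow$(ii), I would unwind the defining pairing. Writing $r=\sum_i a_i\otimes b_i$, the convention $\langle\eta,r_+(\xi)\rangle=\langle\xi\otimes\eta,r\rangle$ gives $r_+(\xi)=\sum_i\langle\xi,a_i\rangle\,b_i$. Hence $E\bigl(r_+(\xi)\bigr)=\sum_i\langle\xi,a_i\rangle\,Eb_i$, which is the contraction of $(\Id\otimes E)(r)$ against $\xi$ in the first tensor slot, whereas $r_+\bigl(E^*\xi\bigr)=\sum_i\langle\xi,Ea_i\rangle\,b_i$ is the contraction of $(E\otimes\Id)(r)$ against $\xi$ in the first slot. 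Since these must agree for every $\xi\in\g^*$, the operator identity $E\circ r_+=r_+\circ E^*$ is equivalent to $(\Id\otimes E)(r)=(E\otimes\Id)(r)$, that is, to $(\Id\otimes E-E\otimes\Id)(r)=0$. Combining the two equivalences yields the proposition. I do not anticipate any genuine obstacle: the Rota-Baxter relation is quadratic in $r$ whereas the compatibility condition is linear, so the two conditions decouple cleanly, and the only point requiring care is to keep the duality and contraction conventions consistent in the short computation establishing (ii$'$)$\Leftrightarrow$(ii).
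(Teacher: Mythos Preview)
Your proposal is correct and follows essentially the same approach as the paper: both split the equivalence into the classical Rota--Baxter/CYBE correspondence (cited as a known fact) and the linear compatibility $E\circ r_+=r_+\circ E^*\Leftrightarrow(\Id\otimes E-E\otimes\Id)(r)=0$, established by a direct pairing computation. Your tensor-slot bookkeeping in the (ii$'$)$\Leftrightarrow$(ii) step is in fact slightly cleaner than the paper's, but the argument is the same.
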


\begin{proof}
	If $r$ is skew-symmetric and satisfies the classical Yang-Baxter equation,
	then $r_+:\g^*\to\g$ is a relative Rota-Baxter operator on the Lie algebra
	$\g$ with respect to the coadjoint representation $(\g^*,\ad^*)$, i.e.,
	\[
	[r_+(\xi),r_+(\eta)]_{\g}
	=
	r_+\bigl(\ad^*_{r_+(\xi)}\eta-\ad^*_{r_+(\eta)}\xi\bigr),
	\qquad \forall\,\xi,\eta\in\g^*.
	\]
	It remains to verify the \EN compatibility. The map $r_+$ is \ENE-compatible if and only if
	\[
	E\circ r_+ = r_+\circ E^*.
	\]
	Pairing both sides with $\eta\in\g^*$ and using the definition of $r_+$ gives
	\[
	\langle \eta, E(r_+(\xi))\rangle
	=
	\langle \xi\otimes\eta, (E\otimes\Id)r\rangle,
	\qquad
	\langle \eta, r_+(E^*\xi)\rangle
	=
	\langle \xi\otimes\eta, (\Id\otimes E)r\rangle.
	\]
	Hence $E\circ r_+=r_+\circ E^*$ holds if and only if
	\[
	(E\otimes\Id)r=(\Id\otimes E)r,
	\]
	or equivalently,
	$
	(\Id\otimes E - E\otimes \Id)(r)=0,
	$
	which is exactly the additional condition defining an \EN $r$-matrix. Combining
	this with the classical equivalence completes the proof.
\end{proof}
\medskip

In the presence of an invariant nondegenerate bilinear form, relative Rota-Baxter operators
with respect to the coadjoint representation reduce to ordinary Rota-Baxter operators of weight zero.

\begin{pro}\label{pro:22}
	Let $(\g,[\cdot,\cdot]_{\g},E,S)$ be a quadratic \ENL algebra and
	$K:\g^*\to\g$ be a linear map.
	Then $K$ is an \ENE-relative Rota-Baxter operator on
	$(\g,[\cdot,\cdot]_{\g},E)$ with respect to the coadjoint \ENE-representation
	$(\g^*;E^*,\ad^*)$ if and only if the map
	\[
	B := K \circ S^{\sharp} : \g \to \g
	\]
	is a Rota-Baxter operator of weight $0$ on the \ENL algebra
	$(\g,[\cdot,\cdot]_{\g},E)$.
\end{pro}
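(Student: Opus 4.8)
The strategy is to translate each defining condition for an $\ENE$-relative Rota-Baxter operator $K:\g^*\to\g$ with respect to $(\g^*;E^*,\ad^*)$ into the corresponding condition for $B:=K\circ S^\sharp:\g\to\g$, using that $S^\sharp$ is an isomorphism of $\ENE$-representations from $(\g;E,\ad)$ to $(\g^*;E^*,\ad^*)$ by Theorem~\ref{quadratic-Eder}. There are two conditions to match up: the relative Rota-Baxter identity with respect to $\ad^*$, which should become the weight-$0$ Rota-Baxter identity for $B$; and the equivariance compatibility $E\circ K=K\circ E^*$, which should become $E\circ B=B\circ E$.

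First I would record the two intertwining identities from Theorem~\ref{quadratic-Eder}: $S^\sharp\circ\ad_x=\ad_x^*\circ S^\sharp$ for all $x\in\g$ (invariance of $S$), and $S^\sharp\circ E=E^*\circ S^\sharp$ (the compatibility \eqref{Eder-manin}). For the equivariance part this is immediate: $E\circ B=E\circ K\circ S^\sharp$, and $K\circ E^*=E\circ K$ is equivalent (post-composing with the isomorphism $S^\sharp$, or rather using $E^*\circ S^\sharp=S^\sharp\circ E$) to $E\circ K\circ S^\sharp=K\circ S^\sharp\circ E=B\circ E$. So $E\circ K=K\circ E^*$ holds if and only if $E\circ B=B\circ E$.

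For the Rota-Baxter identity, I would substitute $\xi=S^\sharp(x)$, $\eta=S^\sharp(y)$ in the relative Rota-Baxter relation $[K\xi,K\eta]_\g=K(\ad^*_{K\xi}\eta-\ad^*_{K\eta}\xi)$. The left side becomes $[Bx,By]_\g$. On the right, $\ad^*_{K\xi}\eta=\ad^*_{Bx}(S^\sharp y)=S^\sharp(\ad_{Bx}y)=S^\sharp([Bx,y]_\g)$ by the invariance intertwining, and similarly $\ad^*_{K\eta}\xi=S^\sharp([By,x]_\g)$. Hence the right side equals $K\bigl(S^\sharp([Bx,y]_\g)-S^\sharp([By,x]_\g)\bigr)=B\bigl([Bx,y]_\g+[x,By]_\g\bigr)$, which is precisely the weight-$0$ Rota-Baxter identity $[Bx,By]_\g=B([Bx,y]_\g+[x,By]_\g)$. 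Since $S^\sharp$ is a bijection, the substitution $\xi=S^\sharp x$, $\eta=S^\sharp y$ ranges over all of $\g^*\times\g^*$, so the two identities are genuinely equivalent, not merely one implying the other. Combining the two equivalences gives the proposition.

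The argument is essentially a bookkeeping exercise once Theorem~\ref{quadratic-Eder} is in hand; the only point requiring a little care is checking that the correspondence is an equivalence in both directions, which follows from the invertibility of $S^\sharp$ (nondegeneracy of $S$), and making sure the sign conventions in $\ad^*$ are handled consistently when transporting $\ad^*_{Bx}\circ S^\sharp=S^\sharp\circ\ad_{Bx}$. I expect no real obstacle here.
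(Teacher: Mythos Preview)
Your proposal is correct and follows essentially the same approach as the paper: both arguments use the intertwining identities $S^{\sharp}\circ\ad_x=\ad_x^{*}\circ S^{\sharp}$ and $S^{\sharp}\circ E=E^{*}\circ S^{\sharp}$ from Theorem~\ref{quadratic-Eder} to transport the relative Rota--Baxter and equivariance conditions for $K$ through the isomorphism $S^{\sharp}$ into the weight-$0$ Rota--Baxter and commutation conditions for $B=K\circ S^{\sharp}$, with bijectivity of $S^{\sharp}$ yielding the equivalence.
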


\begin{proof}
	For all $x,y\in\g$, since $S$ is invariant and nondegenerate, by Theorem~\ref{quadratic-Eder} we have
	\[
	S^{\sharp}([x,y]_{\g}) = \ad_x^*(S^{\sharp}y) - \ad_y^*(S^{\sharp}x).
	\]
	Using this, we compute
	\begin{align*}
		B([Bx,y]_{\g} + [x,By]_{\g})
		= K\bigl(S^{\sharp}([Bx,y]_{\g} + [x,By]_{\g})\bigr) = K\bigl(\ad^*_{Bx} S^{\sharp}y - \ad^*_{By} S^{\sharp}x\bigr).
	\end{align*}
	Thus, the Rota-Baxter identity for $B$ is equivalent to the relative Rota-Baxter identity for $K$ with respect to the coadjoint representation.
		For \EN compatibility, using \eqref{quad-intertwine-E}, we have
	\[
	E \circ B = B \circ E
	\quad \Longleftrightarrow \quad
	(E\circ K) \circ S^{\sharp} = (K \circ E^*) \circ S^{\sharp}.
	\]
	Since $S^{\sharp}$ is an isomorphism, this holds if and only if
	\[
	E \circ K = K \circ E^*.
	\]
	Hence, $K$ is an \ENE-relative Rota-Baxter operator if and only if
	$B = K \circ S^{\sharp}$ is a Rota-Baxter  operator of weight $0$ on
	$(\g,[\cdot,\cdot]_{\g},E)$. Equivalently, $(\g,[\cdot,\cdot]_{\g},E,S,B)$
	defines a quadratic \ENL Rota-Baxter  algebra.
\end{proof}

As an immediate consequence of Proposition~\ref{pro:22} and the identification of \ENE-relative Rota-Baxter operators in the quadratic setting, we obtain the following.

\begin{cor}\label{cor:rplus-quadratic-RB}
	Let $(\g,[\cdot,\cdot]_{\g},E,S)$ be a quadratic \ENL algebra and
	$r\in\g\otimes\g$ be skew-symmetric. Then the map
	\[
	B := r_+ \circ S^{\sharp} : \g \to \g
	\]
	defines a Rota-Baxter operator of weight $0$ on $(\g,[\cdot,\cdot]_{\g},E)$
	if and only if $r$ is an \EN $r$-matrix.
	In particular, this provides an explicit way to construct \EN Rota-Baxter operators
	from quadratic \EN $r$-matrices.
\end{cor}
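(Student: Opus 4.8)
The plan is to deduce the statement by composing the two ``if and only if'' results already at our disposal, taking $K := r_+$ as the relative Rota-Baxter operator. First I would invoke Proposition~\ref{pro:rrr}: since $r\in\g\otimes\g$ is skew-symmetric, the map $r_+:\g^*\to\g$ defined by $\langle \eta, r_+(\xi)\rangle = \langle \xi\otimes\eta, r\rangle$ is an \ENE-relative Rota-Baxter operator on $(\g,[\cdot,\cdot]_{\g},E)$ with respect to the coadjoint \ENE-representation $(\g^*;E^*,\ad^*)$ if and only if $r$ is an \EN $r$-matrix. This already bundles the two defining conditions of an \EN $r$-matrix --- the classical Yang-Baxter equation $\lcf r,r\rcf=0$ and the compatibility $(\Id\otimes E-E\otimes\Id)(r)=0$ --- into a single operator-theoretic statement about $r_+$.

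Next I would apply Proposition~\ref{pro:22} with $K=r_+$. Since $(\g,[\cdot,\cdot]_{\g},E,S)$ is a quadratic \ENL algebra, $S^{\sharp}:\g\to\g^*$ is an isomorphism of \ENE-representations (Theorem~\ref{quadratic-Eder}), and Proposition~\ref{pro:22} asserts that $r_+$ is an \ENE-relative Rota-Baxter operator on $(\g,E)$ relative to $(\g^*;E^*,\ad^*)$ if and only if $B := r_+\circ S^{\sharp}:\g\to\g$ is a Rota-Baxter operator of weight $0$ on the \ENL algebra $(\g,[\cdot,\cdot]_{\g},E)$. Chaining the two equivalences gives: $B=r_+\circ S^{\sharp}$ is a weight-$0$ Rota-Baxter operator on $(\g,E)$ $\Longleftrightarrow$ $r_+$ is an \ENE-relative Rota-Baxter operator $\Longleftrightarrow$ $r$ is an \EN $r$-matrix, which is exactly the assertion. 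The final ``in particular'' sentence then records the constructive content: starting from any skew-symmetric \EN $r$-matrix on a quadratic \ENL algebra, $r_+\circ S^{\sharp}$ is an explicit \EN Rota-Baxter operator of weight $0$, and in fact $(\g,[\cdot,\cdot]_{\g},E,S,B)$ is a quadratic \ENL Rota-Baxter algebra, as already noted inside the proof of Proposition~\ref{pro:22}.

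The only point requiring a moment of care is to confirm that the linear map $r_+$ appearing in Proposition~\ref{pro:rrr} is literally the same map fed into Proposition~\ref{pro:22} --- both are defined by the identity $\langle\eta,r_+(\xi)\rangle=\langle\xi\otimes\eta,r\rangle$ --- and that the \ENE-representation used in both, namely the coadjoint \ENE-representation $(\g^*;E^*,\ad^*)$ of Corollary~\ref{thm:coadjoint}, is one and the same; both are immediate from the statements. I do not expect a genuine obstacle here, as no computation beyond the two cited propositions is involved: the corollary is purely a matter of composing equivalences, and the only standing hypothesis to keep track of is the skew-symmetry of $r$, which is assumed in both the corollary and Proposition~\ref{pro:rrr}.
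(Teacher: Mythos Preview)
Your proposal is correct and matches the paper's approach exactly: the paper presents this corollary without a separate proof, introducing it only as ``an immediate consequence of Proposition~\ref{pro:22} and the identification of \ENE-relative Rota-Baxter operators in the quadratic setting,'' which is precisely the chain Proposition~\ref{pro:rrr} $\Rightarrow$ Proposition~\ref{pro:22} that you spell out.
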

\medskip

It is a classical result due to Bai \cite{Bai} that relative Rota-Baxter
operators give rise to solutions of the classical Yang-Baxter equation
on semidirect product Lie algebras. More precisely, let $(\g,[\cdot,\cdot]_\g)$
be a Lie algebra, let $(W;\rho)$ be a representation of $\g$ and $K: W \to \g$ be a linear map. Using the canonical identification of a linear map $K:W\to\g$ with an element
\[
\bar K \in (\g\ltimes_{\rho^*}W^*)\otimes(\g\ltimes_{\rho^*}W^*),
\]
we define the skew--symmetric tensor
$
r_K := \bar K - \sigma(\bar K),
$
where $\sigma$ denotes the flip map. Then $r_K$ is a skew-symmetric solution
of the classical Yang-Baxter equation in the Lie algebra $\g \ltimes_{\rho^*} W^*$
if and only if $K$ is a relative Rota--Baxter  operator on $\g$ with respect
to $(W;\rho)$.

We now extend the classical correspondence between relative Rota-Baxter operators and solutions of the classical Yang-Baxter equation to the \ENL setting.

Let $(\g,[\cdot,\cdot]_{\g},E)$ be an \ENL algebra and let $(W;T,\rho)$ be an \ENE-representation of $\g$. Then the semidirect product Lie algebra $\g\ltimes_{\rho^*} W^*$ inherits a natural \ENL structure given by the operator $E+T^*$.

\begin{thm}\label{skew-solution-Equivariant-Nijenhuis}
	Let $K:W\to\g$ be an \ENE-relative Rota-Baxter operator on the \ENL algebra $(\g,[\cdot,\cdot]_{\g},E)$ with respect to the \ENE-representation $(W;T,\rho)$. Then
	\[
	r_K := \bar K - \sigma(\bar K)
	\]
	is a skew-symmetric solution of the classical Yang-Baxter equation in the semidirect product \ENL algebra $(\g\ltimes_{\rho^*}W^*, E+T^*)$.
\end{thm}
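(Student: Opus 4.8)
The plan is to reduce the claim to two facts that are already available: the classical theorem of Bai, which says that $r_K = \bar K - \sigma(\bar K)$ solves the classical Yang--Baxter equation in $\g\ltimes_{\rho^*}W^*$ if and only if $K$ is a relative Rota--Baxter operator, and Proposition~\ref{pro:rrr}, which characterizes when a skew-symmetric tensor on an \ENL algebra is an \EN $r$-matrix. Concretely, set $\widehat\g := \g\ltimes_{\rho^*}W^*$ and $\widehat E := E + T^*$. The first step is to confirm that $(\widehat\g,\widehat E)$ is indeed an \ENL algebra: since $(W;T,\rho)$ is an \ENE-representation of $(\g,E)$, Proposition~\ref{dual-representation} gives that $(W^*;T^*,\rho^*)$ is also an \ENE-representation, and then Proposition~\ref{semidirect-ENL} applies verbatim to produce the semidirect \ENL algebra $(\widehat\g,\widehat E)$. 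This justifies the statement's assertion that $E+T^*$ equips the semidirect product with an \ENL structure.

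Next I would observe that, because $r_K$ lives inside $\g\otimes W^* \subset \widehat\g\otimes\widehat\g$ and $r_K$ is skew-symmetric, to conclude it is an \EN $r$-matrix for $(\widehat\g,\widehat E)$ it suffices, by the definition of \EN $r$-matrix, to check (i) the classical Yang--Baxter equation $\lcf r_K,r_K\rcf = 0$ in $\widehat\g$ and (ii) the compatibility $(\Id\otimes\widehat E - \widehat E\otimes\Id)(r_K)=0$. For (i), since $K$ is in particular a relative Rota--Baxter operator on $\g$ with respect to $(W;\rho)$ (condition~(i) in the definition of \ENE-relative Rota--Baxter operator), Bai's theorem gives $\lcf r_K,r_K\rcf=0$ directly; this requires no new computation. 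For (ii), I would use the identification of $K:W\to\g$ with $\bar K$ and unwind $(\Id\otimes\widehat E - \widehat E\otimes\Id)(\bar K)$ in terms of $E\circ K$ and $K\circ T$. Here the key input is the \ENE-compatibility condition $E\circ K = K\circ T$ from part~(ii) of the definition, exactly as in the proof of Proposition~\ref{pro:rrr}: pairing against test elements, $(\widehat E\otimes\Id)\bar K$ corresponds to $E\circ K$ while $(\Id\otimes\widehat E)\bar K$ corresponds to $K\circ T^{**}=K\circ T$, so the two agree precisely because $E\circ K = K\circ T$; applying $\sigma$ and subtracting shows $(\Id\otimes\widehat E - \widehat E\otimes\Id)(r_K)=0$ as well.

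The main obstacle is bookkeeping rather than conceptual: one must be careful about where the operator $\widehat E = E+T^*$ acts when evaluated on $\bar K\in(\g\ltimes_{\rho^*}W^*)^{\otimes 2}$. Since $\bar K$ has one leg in $\g$ and one leg in $W^*$, applying $\widehat E$ to the first tensor slot reproduces $E$ on the $\g$-leg, while applying it to the second slot reproduces $T^*$ on the $W^*$-leg; under the canonical identification $\g\otimes W^*\simeq\Hom(W,\g)$ these correspond respectively to precomposition/postcomposition by $E$ and by $T$ (using $T^{**}=T$ in the finite-dimensional setting, which is in force throughout the paper). Once this dictionary is set up correctly, the compatibility condition $E\circ K = K\circ T$ translates into precisely $(\Id\otimes\widehat E)(\bar K)=(\widehat E\otimes\Id)(\bar K)$, and the skew-symmetry of $r_K$ then yields condition~(ii). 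Combining (i) and (ii), $r_K$ is an \EN $r$-matrix, and in particular a skew-symmetric solution of the classical Yang--Baxter equation, in $(\widehat\g,\widehat E)$.
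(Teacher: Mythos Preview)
Your proposal is correct and follows essentially the same approach as the paper: invoke Bai's theorem for the classical Yang--Baxter equation, then verify the \EN compatibility $(\Id\otimes\widehat E-\widehat E\otimes\Id)(r_K)=0$ using $E\circ K=K\circ T$. The only stylistic difference is that you argue via the identification $\g\otimes W^*\simeq\Hom(W,\g)$, whereas the paper carries out the same verification by pairing $\bar K$ against test elements $\xi+u,\eta+v\in\g^*\oplus W$; both unwind to exactly the identity $K\circ T=E\circ K$.
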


\begin{proof}
	By Bai's classical result \cite{Bai}, $r_K$ satisfies the classical Yang-Baxter equation in the Lie algebra $\g\ltimes_{\rho^*}W^*$ if and only if $K$ is a relative Rota-Baxter  operator on $\g$ with respect to the representation $(W,\rho)$.
	It remains to verify compatibility with the \EN operator $E+T^*$. For all $\xi,\eta\in \g^*$ and $u,v \in W$, we compute:
	\begin{align*}
		 \bigl((E+T^*)\otimes \Id - \Id \otimes (E+T^*)\bigr)\bar K(\xi+u, \eta+v)
		&  = \bar K(E^*\xi + Tu, \eta + v) - \bar K(\xi + u, E^*\eta + Tv) \\
		&  = \langle K(Tu), \eta \rangle - \langle K u, E^* \eta \rangle \\
		&   = \langle (K \circ T - E \circ K) u, \eta \rangle = 0,
	\end{align*}
	where the last equality uses the \ENE-compatibility condition $E \circ K = K \circ T$.
	Since the flip map $\sigma$ anti-commutes with $(E+T^*)\otimes \Id - \Id \otimes (E+T^*)$, the same calculation applies to $\sigma(\bar K)$, yielding
	\[
	\bigl((E+T^*) \otimes \Id - \Id \otimes (E+T^*)\bigr) r_K = 0.
	\]
	Hence $r_K$ satisfies the \EN compatibility condition, and therefore is a skew-symmetric solution of the classical Yang-Baxter equation in the \ENL algebra $(\g\ltimes_{\rho^*}W^*, E+T^*)$.
\end{proof}
\section{Equivariant Nijenhuis operators on pre-Lie algebras: pre-\ENL algebras}
\label{sec.7}

Pre-Lie algebras refine Lie algebra structures by encoding additional, ordered information through a nonassociative product whose commutator recovers a Lie bracket. In this section, we introduce \emph{equivariant Nijenhuis pre-Lie algebras} and explain how they naturally fit into the \ENL framework developed above.

\medskip

Let $(\g,\{\cdot,\cdot\})$ be a pre-Lie algebra, i.e., a vector space $\g$ equipped with a bilinear product
$
\{\cdot,\cdot\}:\g\otimes \g\to \g
$
whose associator
\[
(x,y,z):=\{\{x,y\},z\}-\{x,\{y,z\}\}
\]
is symmetric in the first two arguments. Equivalently, for all $x,y,z\in \g$,
\begin{equation}\label{pre-Lie-defi}
	\{\{x,y\},z\}-\{x,\{y,z\}\}=\{\{y,x\},z\}-\{y,\{x,z\}\}.
\end{equation}
The induced Lie bracket is given by
\[
[x,y]:=\{x,y\}-\{y,x\}.
\]
Nijenhuis operators on pre-Lie algebras arise in deformation theory; see, e.g., \cite{Burde,Gerstenhaber}.

\begin{defi}
	A \emph{Nijenhuis operator} on a pre-Lie algebra $(\g,\{\cdot,\cdot\})$ is a linear map $N:\g\to\g$ satisfying
	\begin{equation}\label{eq:preLie-Nijenhuis}
		\{Nx,Ny\} = N\big(\{Nx,y\} + \{x,Ny\} - N\{x,y\}\big), \qquad \forall x,y\in\g.
	\end{equation}
	A pre-Lie algebra equipped with such an operator is called a \emph{pre-\NL algebra}.
\end{defi}
Given $N$, one defines the $N$-deformed product
\[
\{x,y\}_N := \{Nx,y\} + \{x,Ny\} - N\{x,y\}, \qquad \forall x,y\in\g.
\]
If $N$ satisfies \eqref{eq:preLie-Nijenhuis}, then $(\g,\{\cdot,\cdot\}_N)$ is again a pre-Lie algebra. Moreover, $N$ is a Nijenhuis operator for the induced Lie algebra $(\g,[\cdot,\cdot])$, and the Lie bracket associated with $\{\cdot,\cdot\}_N$ coincides with the Lie-Nijenhuis deformation:
\[
[x,y]_N := [Nx,y] + [x,Ny] - N[x,y].
\]
In general, the converse does not hold: a Nijenhuis operator on the induced Lie algebra need not be a Nijenhuis operator for the underlying pre-Lie structure. The same phenomenon arises for equivariance conditions. Unlike the Lie algebra case, where equivariance is naturally formulated with respect to the adjoint representation, a pre-Lie algebra lacks a canonical adjoint pre-Lie action. Its only intrinsic symmetry is the adjoint representation of the induced Lie algebra. Consequently, one must exercise care when formulating equivariance conditions for operators on pre-Lie algebras, as discussed below.
\begin{lem}\label{lem:strong-preLie-equiv-implies-Lie-equiv}
	Let $(\g,\{\cdot,\cdot\})$ be a pre-Lie algebra with induced Lie bracket $[x,y]=\{x,y\}-\{y,x\}$, and $E:\g\to\g$ be a linear map.
	\begin{enumerate}
		\item If $E$ satisfies the compatibility condition
		\begin{equation}\label{eq:strong-preLie-equiv}
			E\{x,y\}=\{Ex,y\}=\{x,Ey\}, \qquad \forall x,y\in\g,
		\end{equation}
		then $E$ is equivariant for the induced Lie algebra:
		\begin{equation}\label{eq:Lie-equiv-from-strong}
			E([x,y])=[Ex,y]=[x,Ey], \qquad \forall x,y\in\g.
		\end{equation}

		\item The converse is false in general: the Lie-equivariance condition \eqref{eq:Lie-equiv-from-strong}
		does not imply \eqref{eq:strong-preLie-equiv} without additional assumptions.
	\end{enumerate}
\end{lem}
\begin{proof}
	{\rm (i)} Using $[x,y]=\{x,y\}-\{y,x\}$ and \eqref{eq:strong-preLie-equiv}, we compute
\[
E([x,y])
=E\{x,y\}-E\{y,x\}
=\{x,Ey\}-\{y,Ex\}.
\]
On the other hand,
\[
[x,Ey]=\{x,Ey\}-\{Ey,x\}.
\]
Applying \eqref{eq:strong-preLie-equiv} to $(y,x)$ gives
$\{y,Ex\}=E\{y,x\}=\{Ey,x\}$, hence $E([x,y])=[x,Ey]$.
Thus \eqref{eq:Lie-equiv-from-strong} holds.
	
	{\rm (ii)}  Condition \eqref{eq:Lie-equiv-from-strong} only constrains the skew-symmetrization of the pre-Lie product. It does not determine $E\{x,y\}$ from $\{x,Ey\}$ nor $E\{y,x\}$ from $\{Ey,x\}$. Hence, the condition \eqref{eq:strong-preLie-equiv} does not follow in general.
\end{proof}
In the following, we distinguish two notions of \EN structures on pre-Lie algebras.

\begin{defi}\label{def:pre-ENL}
	Let $(\g,\{\cdot,\cdot\})$ be a pre-Lie algebra, and let
	$[x,y]:=\{x,y\}-\{y,x\}$ denote the induced Lie bracket.
	Let $E:\g\to\g$ be a linear map. We say that $(\g,\{\cdot,\cdot\},E)$ is a \emph{pre-\ENL algebra} if $E$
		satisfies the compatibility condition
		\[
		E\{x,y\} = \{Ex,y\} = \{x,Ey\}, \qquad \forall x,y \in \g.
		\]	
\end{defi}
By Lemma~\ref{lem:strong-preLie-equiv-implies-Lie-equiv}, every pre-\ENL algebra induces an \ENL structure on the associated Lie algebra. This notion is relevant for the \ENL constructions developed in the previous sections.


\begin{pro}\label{Equivariant-Nijenhuis-Pre-conclusion3}
	Let $(\g,\{\cdot,\cdot\},E)$ be a pre-\ENL algebra. Then:
	\begin{itemize}
		\item[{\rm (i)}] $(\g,[\cdot,\cdot]_{\g},E)$ is an \ENL algebra, where
		\[
		[x,y]_{\g}:=\{x,y\}-\{y,x\}, \qquad \forall x,y\in\g.
		\]
		We call $(\g,[\cdot,\cdot]_{\g},E)$ the \emph{sub-adjacent \ENL algebra} of $(\g,\{\cdot,\cdot\},E)$.
		
		\item[{\rm (ii)}] Define $L:\g\to\gl(\g)$ by $L(x)y=\{x,y\}$. Then $(\g;E,L)$ is a representation of the \ENL algebra $(\g,[\cdot,\cdot]_{\g},E)$.
		
		\item[{\rm (iii)}] The identity map $\Id:\g\to\g$ is a relative Rota-Baxter operator on $(\g,[\cdot,\cdot]_{\g},E)$ with respect to the representation $(\g;E,L)$.
	\end{itemize}
\end{pro}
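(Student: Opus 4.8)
The plan is to establish the three parts in order, using the strong equivariance condition \eqref{eq:strong-preLie-equiv} as the main input and invoking the corresponding classical pre-Lie facts for the underlying (non-operator) structure. For part~(i), I would first recall the well-known fact that the commutator bracket $[x,y]_{\g}:=\{x,y\}-\{y,x\}$ of any pre-Lie algebra satisfies the Jacobi identity, so $(\g,[\cdot,\cdot]_{\g})$ is a Lie algebra; this is immediate from the symmetry of the associator \eqref{pre-Lie-defi}. Then equivariance of $E$ for this bracket is exactly \eqref{eq:Lie-equiv-from-strong}, which is Lemma~\ref{lem:strong-preLie-equiv-implies-Lie-equiv}(i). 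Hence $(\g,[\cdot,\cdot]_{\g},E)$ is an \ENL algebra.

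For part~(ii), I would check that $L:\g\to\gl(\g)$, $L(x)y=\{x,y\}$, is a representation of the Lie algebra $(\g,[\cdot,\cdot]_{\g})$ --- this is the standard left multiplication representation of a pre-Lie algebra, and the relation $L([x,y]_{\g})=[L(x),L(y)]$ is precisely a rewriting of the pre-Lie identity \eqref{pre-Lie-defi}. It then remains to verify the \ENE-representation condition \eqref{representation-equivariant-Liealg} with $T:=E$, namely $E(L(x)y)=L(Ex)y=L(x)(Ey)$ for all $x,y\in\g$. Unwinding the definition of $L$, this reads $E\{x,y\}=\{Ex,y\}=\{x,Ey\}$, which is exactly the strong equivariance hypothesis \eqref{eq:strong-preLie-equiv}. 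So $(\g;E,L)$ is an \ENE-representation of $(\g,[\cdot,\cdot]_{\g},E)$.

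For part~(iii), I would verify that $\Id:\g\to\g$ is an \ENE-relative Rota-Baxter operator with respect to $(\g;E,L)$. The relative Rota-Baxter identity $[\Id u,\Id v]_{\g}=\Id\bigl(L(\Id u)v-L(\Id v)u\bigr)$ becomes $[u,v]_{\g}=\{u,v\}-\{v,u\}$, which holds by the very definition of the sub-adjacent bracket; this is the classical statement that the identity is an $\mathcal O$-operator for the regular representation of a pre-Lie algebra. The \ENE-compatibility condition $E\circ\Id=\Id\circ E$ is trivially satisfied. Therefore $\Id$ is an \ENE-relative Rota-Baxter operator on $(\g,[\cdot,\cdot]_{\g},E)$ with respect to $(\g;E,L)$.

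There is no serious obstacle here: the proposition is essentially a bookkeeping statement assembling the classical pre-Lie--to--Lie passage with the strong equivariance condition, which was designed precisely so that $E$ transports correctly to the left multiplication representation. The only point requiring a moment's attention is confirming that the \ENE-representation axiom for $L$ coincides verbatim with \eqref{eq:strong-preLie-equiv} --- i.e.\ that strong equivariance of $E$ on the pre-Lie product is exactly what is needed, rather than merely the weaker Lie-equivariance of part~(i) --- which is why part~(ii), and hence the full statement, fails for weak pre-\ENL algebras.
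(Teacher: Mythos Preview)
Your proposal is correct and follows essentially the same approach as the paper: part~(i) via Lemma~\ref{lem:strong-preLie-equiv-implies-Lie-equiv}, part~(ii) by recognizing that the pre-Lie identity gives $L([x,y]_{\g})=[L(x),L(y)]$ and that the \ENE-representation axiom for $L$ is verbatim the strong equivariance \eqref{eq:strong-preLie-equiv}, and part~(iii) by unwinding the relative Rota--Baxter identity to the definition of the sub-adjacent bracket together with the trivial compatibility $E\circ\Id=\Id\circ E$. Your closing observation that part~(ii) genuinely requires strong (not weak) equivariance is also on point.
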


\begin{proof}
	{\rm (i)}
	By Lemma~\ref{lem:strong-preLie-equiv-implies-Lie-equiv}, the compatibility
	condition implies equivariance with respect to the induced Lie bracket.
	Hence $(\g,[\cdot,\cdot]_{\g},E)$ is an \ENL algebra.

	{\rm (ii)} For all $x,y,z\in\g$, the pre-Lie identity \eqref{pre-Lie-defi} implies
	$L([x,y]_{\g})=[L(x),L(y)]$. Moreover, we have
	\[
	L(Ex)=E\circ L(x)=L(x)\circ E.
	\]
	Thus $(\g;E,L)$ is a representation of $(\g,[\cdot,\cdot]_{\g},E)$.
	
{\rm (iii)} This follows directly from the definition of relative Rota-Baxter
	operator and the fact that $[x,y]_{\g}=\{x,y\}-\{y,x\}$.
	\end{proof}
\begin{pro}\label{RRB-Equivariant-Nijenhuis-Pre-Lie}
	Let $K:W\rightarrow\g$ be a relative Rota-Baxter operator on an \ENL algebra
	$(\g,[\cdot,\cdot]_{\g},E)$ with respect to an \ENE-representation $(W;T,\rho)$.
	Then $(W,\{\cdot,\cdot\}_{K},T)$ is a pre-\ENL algebra, where
	\[
	\{u,v\}_{K}=\rho(Ku)v,\qquad \forall u,v\in W.
	\]
\end{pro}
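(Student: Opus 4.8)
The plan is to first recover the underlying pre-Lie algebra structure from the classical theory of relative Rota-Baxter operators, and then to verify the strong equivariance condition of Definition~\ref{def:pre-ENL}(2) for the operator $T$.

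First I would recall the standard fact that a relative Rota-Baxter operator $K\colon W\to\g$ with respect to a representation $(W;\rho)$ turns $W$ into a pre-Lie algebra under $\{u,v\}_K=\rho(Ku)v$. Indeed, writing $(u,v,w)$ for the associator of $\{\cdot,\cdot\}_K$, one computes $(u,v,w)-(v,u,w)=\rho\bigl(K(\rho(Ku)v-\rho(Kv)u)-[Ku,Kv]_{\g}\bigr)w$, which vanishes by the relative Rota-Baxter identity, so \eqref{pre-Lie-defi} holds; moreover the induced Lie bracket is precisely $[u,v]_K=\rho(Ku)v-\rho(Kv)u$, the descendent bracket of Proposition~\ref{descendent-AveLie}. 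Thus $(W,\{\cdot,\cdot\}_K)$ is a pre-Lie algebra, and it remains only to check that $T$ is strongly equivariant for $\{\cdot,\cdot\}_K$.

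Next I would compute, using the \ENE-representation identity \eqref{representation-equivariant-Liealg} applied to the element $Ku\in\g$, that $T\{u,v\}_K=T(\rho(Ku)v)=\rho(E(Ku))\,v=\rho(Ku)(Tv)=\{u,Tv\}_K$, which already yields $T\{u,v\}_K=\{u,Tv\}_K$. For the remaining identity I would invoke the \ENE-compatibility $E\circ K=K\circ T$ of $K$, which gives $\rho(E(Ku))v=\rho(K(Tu))v=\{Tu,v\}_K$. Hence $T\{u,v\}_K=\{Tu,v\}_K=\{u,Tv\}_K$, which is exactly the strong compatibility condition, so $(W,\{\cdot,\cdot\}_K,T)$ is a (strong) pre-\ENL algebra; by Proposition~\ref{Equivariant-Nijenhuis-Pre-conclusion3} its sub-adjacent \ENL algebra is then the descendent \ENL algebra $(W,[\cdot,\cdot]_K,T)$ of Proposition~\ref{descendent-AveLie}.

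The computation is essentially bookkeeping; the only point requiring care is that one must produce the \emph{strong} equivariance $T\{u,v\}_K=\{Tu,v\}_K=\{u,Tv\}_K$ and not merely the Lie-level equivariance of $T$. This is precisely why \emph{both} equalities in \eqref{representation-equivariant-Liealg} are needed (one delivers $\{u,Tv\}_K$ directly, the other, combined with $E\circ K=K\circ T$, delivers $\{Tu,v\}_K$): dropping either the full \ENE-representation condition or the operator-compatibility $E\circ K=K\circ T$ would leave one only with a weak pre-\ENL algebra, as anticipated by Lemma~\ref{lem:strong-preLie-equiv-implies-Lie-equiv}.
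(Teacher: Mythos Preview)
Your proof is correct and follows essentially the same approach as the paper's: first invoke the classical fact that a relative Rota--Baxter operator yields a pre-Lie algebra on $W$, then verify the strong equivariance of $T$ using the \ENE-representation identity \eqref{representation-equivariant-Liealg} together with the compatibility $E\circ K=K\circ T$. The paper carries out the two equalities in the opposite order (first $\{Tu,v\}_K$, then $\{u,Tv\}_K$), but the content is identical; your closing remark that both hypotheses are genuinely needed to obtain \emph{strong} rather than weak equivariance is a correct and useful observation that the paper does not make explicit.
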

\begin{proof}
	It is well known that if $K$ is a relative Rota-Baxter operator on a Lie algebra
	$(\g,[\cdot,\cdot]_{\g})$ with respect to a representation $(W;\rho)$, then
	$(W,\{\cdot,\cdot\}_{K})$ is a pre-Lie algebra.
		Using $E\circ K=K\circ T$ and the fact that $(W;T,\rho)$ is an \ENE-representation,
	for all $u,v\in W$ we compute
	\[
	T\{u,v\}_{K}
	=
	T(\rho(Ku)v)
	=
	\rho(E(Ku))v
	=
	\rho(K(Tu))v
	=
	\{Tu,v\}_{K}.
	\]
	Similarly, one has $T\{u,v\}_{K}=\{u,Tv\}_{K}$.
	Hence $T$ satisfies the equivariance condition with respect to the
	pre-Lie product $\{\cdot,\cdot\}_{K}$, and therefore
	$(W,\{\cdot,\cdot\}_{K},T)$ is a pre-\ENL algebra.
\end{proof}
\begin{rmk}
	With the notation above, the Lie bracket induced by the pre-Lie product
	$\{\cdot,\cdot\}_{K}$ satisfies
	\[
	[u,v]_K=\{u,v\}_{K}-\{v,u\}_{K}.
	\]
	Hence the \ENL algebra $(W,[\cdot,\cdot]_K,T)$ constructed in
	Proposition~\ref{descendant-AveLie} is precisely the subadjacent \ENL Lie algebra
	of the pre-\ENL algebra $(W,\{\cdot,\cdot\}_{K},T)$.
\end{rmk}
	
	\begin{pro}\label{prop:preENL-iff-invertible-RRB}
		Let $(\g,[\cdot,\cdot]_{\g},E)$ be an \ENL algebra. Then there exists a
		 pre-\ENL algebra $(\g,\{\cdot,\cdot\},E)$ whose subadjacent \ENL Lie
		algebra is $(\g,[\cdot,\cdot]_{\g},E)$ if and only if there exist an \ENE-representation
		$(W;T,\rho)$ of $(\g,[\cdot,\cdot]_{\g},E)$ and an invertible \ENE-relative
		Rota-Baxter  operator $K:W\to\g$ with respect to $(W;T,\rho)$.
		In this case, the pre-Lie product on $\g$ is given by
		\[
		\{x,y\}_{K}:=K\big(\rho(x)K^{-1}y\big),\qquad \forall x,y\in\g,
		\]
		and $(\g,\{\cdot,\cdot\}_{K},E)$ is a pre-\ENL algebra.
	\end{pro}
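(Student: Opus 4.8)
The plan is to prove the two implications separately; the explicit product $\{\cdot,\cdot\}_K$ will emerge in the reverse direction by transport of structure. For the forward implication, suppose $(\g,\{\cdot,\cdot\},E)$ is a pre-\ENL algebra whose subadjacent \ENL Lie algebra is $(\g,[\cdot,\cdot]_{\g},E)$. I would take $W=\g$, $T=E$ and $\rho=L$, where $L(x)y=\{x,y\}$. By Proposition~\ref{Equivariant-Nijenhuis-Pre-conclusion3}(ii), $(\g;E,L)$ is a representation of $(\g,[\cdot,\cdot]_{\g},E)$ with $L(Ex)=E\circ L(x)=L(x)\circ E$, which is exactly the \ENE-representation condition; so $(\g;E,L)$ is an \ENE-representation. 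By part~(iii) the identity map $\Id:\g\to\g$ is a relative Rota-Baxter operator with respect to $(\g;E,L)$, and since $E\circ\Id=\Id\circ E$ it is an \ENE-relative Rota-Baxter operator, trivially invertible. This produces the required data.

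For the reverse implication, let $K:W\to\g$ be an invertible \ENE-relative Rota-Baxter operator with respect to an \ENE-representation $(W;T,\rho)$. By Proposition~\ref{RRB-Equivariant-Nijenhuis-Pre-Lie}, $(W,\{\cdot,\cdot\}_K^{W},T)$ is a pre-\ENL algebra, where $\{u,v\}_K^{W}:=\rho(Ku)v$. I would transport this along the linear isomorphism $K$, setting
\[
\{x,y\}_K:=K\bigl(\{K^{-1}x,K^{-1}y\}_K^{W}\bigr)=K\bigl(\rho(x)K^{-1}y\bigr),\qquad\forall x,y\in\g .
\]
Since $K$ is bijective, $(\g,\{\cdot,\cdot\}_K)$ is a pre-Lie algebra isomorphic to $(W,\{\cdot,\cdot\}_K^{W})$; and writing $u=K^{-1}x$, $v=K^{-1}y$, the relative Rota-Baxter identity for $K$ gives
\[
\{x,y\}_K-\{y,x\}_K=K\bigl(\rho(Ku)v\bigr)-K\bigl(\rho(Kv)u\bigr)=[Ku,Kv]_{\g}=[x,y]_{\g},
\]
so the subadjacent Lie algebra of $\{\cdot,\cdot\}_K$ is $(\g,[\cdot,\cdot]_{\g})$.

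It then remains to check that $E$ satisfies the \emph{strong} compatibility $E\{x,y\}_K=\{Ex,y\}_K=\{x,Ey\}_K$. The key observation is that the \ENE-compatibility $E\circ K=K\circ T$ is equivalent, upon composing with $K^{-1}$, to $K^{-1}\circ E=T\circ K^{-1}$; combining this with the \ENE-representation identities $T(\rho(x)u)=\rho(Ex)u=\rho(x)(Tu)$ yields, for all $x,y\in\g$,
\[
\{Ex,y\}_K=K\bigl(\rho(Ex)K^{-1}y\bigr)=K\bigl(\rho(x)(TK^{-1}y)\bigr)=K\bigl(\rho(x)K^{-1}(Ey)\bigr)=\{x,Ey\}_K
\]
and
\[
\{Ex,y\}_K=K\bigl(T(\rho(x)K^{-1}y)\bigr)=E\bigl(K(\rho(x)K^{-1}y)\bigr)=E\{x,y\}_K .
\]
Hence $(\g,\{\cdot,\cdot\}_K,E)$ is a (strong) pre-\ENL algebra, and by Proposition~\ref{Equivariant-Nijenhuis-Pre-conclusion3}(i) its subadjacent \ENL Lie algebra is $(\g,[\cdot,\cdot]_{\g},E)$, as claimed.

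This is essentially a transport-of-structure argument, and I do not anticipate a genuine obstacle. The only delicate point is that the equivariance of $E$ must be verified in the strong pre-Lie form, not merely for the induced Lie bracket, which forces one to use the full interplay between $E\circ K=K\circ T$ (equivalently $K^{-1}\circ E=T\circ K^{-1}$) and the \ENE-representation axioms, rather than just Lemma~\ref{lem:strong-preLie-equiv-implies-Lie-equiv}; once this is arranged, both directions follow from the results of Section~\ref{sec.6} and Proposition~\ref{Equivariant-Nijenhuis-Pre-conclusion3}.
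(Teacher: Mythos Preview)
Your proposal is correct and follows essentially the same approach as the paper: the forward direction uses $(W;T,\rho)=(\g;E,L)$ with $K=\Id$ via Proposition~\ref{Equivariant-Nijenhuis-Pre-conclusion3}, and the reverse direction transports the pre-\ENL structure of Proposition~\ref{RRB-Equivariant-Nijenhuis-Pre-Lie} along the isomorphism $K$, then verifies strong equivariance using $E\circ K=K\circ T$ together with the \ENE-representation identities. Your argument is in fact slightly more complete, since you explicitly verify that the subadjacent Lie bracket of $\{\cdot,\cdot\}_K$ recovers $[\cdot,\cdot]_{\g}$ via the relative Rota--Baxter identity, a step the paper leaves implicit.
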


	\begin{proof}
		Assume that $K:W\to\g$ is an invertible \ENE-relative Rota-Baxter  operator on
		$(\g,[\cdot,\cdot]_{\g},E)$ with respect to an \ENE-representation $(W;T,\rho)$.
		By Proposition~\ref{RRB-Equivariant-Nijenhuis-Pre-Lie}, $(W,\{\cdot,\cdot\}_{K},T)$
		is a pre-\ENL algebra, where $\{u,v\}_{K}=\rho(Ku)v$.
		Transporting the pre-Lie product to $\g$ by $K$, we obtain, for all $x,y\in\g$,
		\[
		\{x,y\}_{K}:=K\big\{K^{-1}x,K^{-1}y\big\}_{K}
		=K\big(\rho(x)K^{-1}y\big),
		\]
		which defines a pre-Lie algebra structure on $\g$.
				Moreover, using $E\circ K=K\circ T$ and the \ENE-representation property
		$T\circ\rho(x)=\rho(Ex)\circ T$, we have
		\[
		E\{x,y\}_{K}
		=E\Big(K(\rho(x)K^{-1}y)\Big)
		=K\Big(T(\rho(x)K^{-1}y)\Big)
		=K\Big(\rho(Ex)K^{-1}y\Big)
		=\{Ex,y\}_{K}.
		\]
		Similarly, since $T(K^{-1}y)=K^{-1}(Ey)$, we obtain
		\[
		E\{x,y\}_{K}
		=K\Big(T(\rho(x)K^{-1}y)\Big)
		=K\Big(\rho(x)T(K^{-1}y)\Big)
		=K\Big(\rho(x)K^{-1}(Ey)\Big)
		=\{x,Ey\}_{K}.
		\]
		Hence $(\g,\{\cdot,\cdot\}_{K},E)$ is a pre-\ENL algebra, and its
		subadjacent \ENL Lie algebra is $(\g,[\cdot,\cdot]_{\g},E)$.
		
		Conversely, let $(\g,\{\cdot,\cdot\},E)$ be a pre-\ENL algebra whose
		subadjacent \ENL algebra is $(\g,[\cdot,\cdot]_{\g},E)$.
		By Proposition~\ref{Equivariant-Nijenhuis-Pre-conclusion3}, $(\g;E,L)$ is an
		\ENE-representation of $(\g,[\cdot,\cdot]_{\g},E)$, where $L(x)y=\{x,y\}$.
		Moreover, the identity map $\Id:\g\to\g$ is an \ENE-relative Rota-Baxter  operator
		on $(\g,[\cdot,\cdot]_{\g},E)$ with respect to $(\g;E,L)$.
		This completes the proof.
	\end{proof}
\begin{thm}\label{thm:CYBE-from-preENL}
	Let $(\g,\{\cdot,\cdot\},E)$ be a pre-\ENL algebra and let $L:\g\to\gl(\g)$ be
	the left multiplication representation, $L(x)y=\{x,y\}$.
	Let $\{e_1,\dots,e_n\}$ be a basis of $\g$ and $\{e_1^*,\dots,e_n^*\}$ its dual basis.
	Define
	\[
	r:=\sum_{i=1}^n\big(e_i\otimes e_i^*-e_i^*\otimes e_i\big)\in
	(\g\ltimes_{L^*}\g^*)\otimes(\g\ltimes_{L^*}\g^*).
	\]
	Then $r$ is an \EN $r$-matrix in the semidirect product \ENL algebra
	$(\g\ltimes_{L^*}\g^*,\,E+E^*)$.
\end{thm}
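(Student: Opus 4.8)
The plan is to reduce the statement to Theorem~\ref{skew-solution-Equivariant-Nijenhuis} by recognizing $r$ as the tensor $r_K$ attached to the identity map, viewed as an \ENE-relative Rota-Baxter operator on the subadjacent \ENL algebra. First I would invoke Proposition~\ref{Equivariant-Nijenhuis-Pre-conclusion3}: since $(\g,\{\cdot,\cdot\},E)$ is a pre-\ENL algebra, its subadjacent triple $(\g,[\cdot,\cdot]_{\g},E)$ with $[x,y]_{\g}=\{x,y\}-\{y,x\}$ is an \ENL algebra, the triple $(\g;E,L)$ is an \ENE-representation of it, and the identity map $\Id:\g\to\g$ is a relative Rota-Baxter operator on $(\g,[\cdot,\cdot]_{\g},E)$ with respect to $(\g;E,L)$. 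Since $\Id$ trivially satisfies $E\circ\Id=\Id\circ E$, the compatibility condition in the definition of an \ENE-relative Rota-Baxter operator holds automatically, so $\Id$ is in fact an \ENE-relative Rota-Baxter operator on $(\g,[\cdot,\cdot]_{\g},E)$ with respect to $(\g;E,L)$.

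Next I would apply Theorem~\ref{skew-solution-Equivariant-Nijenhuis} with $K=\Id$, $W=\g$, $T=E$ and $\rho=L$. Since $T^{*}=E^{*}$ and the dual of $L$ yields precisely the semidirect product $\g\ltimes_{L^{*}}\g^{*}$, the theorem gives that $r_{\Id}:=\overline{\Id}-\sigma(\overline{\Id})$ is a skew-symmetric solution of the classical Yang-Baxter equation in the semidirect product \ENL algebra $(\g\ltimes_{L^{*}}\g^{*},\,E+E^{*})$; in particular it satisfies both $\lcf r_{\Id},r_{\Id}\rcf=0$ and the \EN compatibility condition $\bigl(\Id\otimes(E+E^{*})-(E+E^{*})\otimes\Id\bigr)(r_{\Id})=0$. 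It then remains to identify $r_{\Id}$ with $r$: under the canonical identification of a linear map $K:W\to\g$ with an element of $(\g\ltimes_{\rho^{*}}W^{*})\otimes(\g\ltimes_{\rho^{*}}W^{*})$, the identity map corresponds to $\overline{\Id}=\sum_{i=1}^{n}e_{i}\otimes e_{i}^{*}$ with $e_{i}\in\g$ and $e_{i}^{*}\in\g^{*}$ regarded inside $\g\ltimes_{L^{*}}\g^{*}$, so that $\sigma(\overline{\Id})=\sum_{i=1}^{n}e_{i}^{*}\otimes e_{i}$ and hence $r_{\Id}=\sum_{i=1}^{n}\bigl(e_{i}\otimes e_{i}^{*}-e_{i}^{*}\otimes e_{i}\bigr)=r$. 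This would complete the proof.

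The argument is essentially formal, the substance being already contained in Proposition~\ref{Equivariant-Nijenhuis-Pre-conclusion3} and Theorem~\ref{skew-solution-Equivariant-Nijenhuis}. The only point demanding care is the bookkeeping for the canonical correspondence $K\leftrightarrow\overline{K}$ --- specifically, checking that for $K=\Id$ it reproduces $\sum_{i}e_{i}\otimes e_{i}^{*}$ with the correct placement of the $\g$ and $\g^{*}$ summands inside the semidirect product --- after which both defining properties of an \EN $r$-matrix are inherited directly from the cited theorem.
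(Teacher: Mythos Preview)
Your proposal is correct and follows essentially the same approach as the paper: both invoke Proposition~\ref{Equivariant-Nijenhuis-Pre-conclusion3} to obtain that $(\g;E,L)$ is an \ENE-representation with $\Id$ an \ENE-relative Rota--Baxter operator, then apply Theorem~\ref{skew-solution-Equivariant-Nijenhuis} with $K=\Id$. Your write-up is slightly more explicit than the paper's in spelling out the trivial compatibility $E\circ\Id=\Id\circ E$ and the identification $r_{\Id}=r$, but the argument is the same.
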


\begin{proof}
	Let $[\cdot,\cdot]_{\g}$ be the induced Lie bracket on $\g$,
	$[x,y]_{\g}:=\{x,y\}-\{y,x\}$.
	By Proposition~\ref{Equivariant-Nijenhuis-Pre-conclusion3}, $(\g;E,L)$ is an
	\ENE-representation of the \ENL algebra $(\g,[\cdot,\cdot]_{\g},E)$, and the identity
	map $\Id:\g\to\g$ is an \ENE-relative Rota-Baxter  operator on
	$(\g,[\cdot,\cdot]_{\g},E)$ with respect to $(\g;E,L)$.
	Therefore, by Theorem~\ref{skew-solution-Equivariant-Nijenhuis}, the element
	$r=\sum_{i=1}^n(e_i\otimes e_i^*-e_i^*\otimes e_i)$ is a skew-symmetric solution of
	the classical Yang-Baxter equation in the \ENL algebra
	$(\g\ltimes_{L^*}\g^*,E+E^*)$.
\end{proof}
\begin{rmk}
	This construction shows that pre-\ENL algebras provide a natural
	pre-Lie refinement of \ENL algebras, compatible with relative
	Rota-Baxter operators and classical $r$-matrix constructions.
\end{rmk}

\section{Concluding Remarks}

In this work, we have introduced the framework of equivariant Nijenhuis Lie (\ENL) algebras and established operator-equivariant analogues of the classical Lie-theoretic correspondences between matched pairs, Manin triples, and Drinfel'd doubles. This setting incorporates equivariant Rota--Baxter operators and provides a systematic mechanism for constructing classical $\EN$ $r$-matrices and the associated \ENL bialgebra structures.

\smallskip

Several natural directions emerge from this framework. Nijenhuis-type operators play a central role in integrable and dynamical systems, where they generate hierarchies of compatible brackets and commuting flows. The \ENL framework extends this paradigm to a symmetric and equivariant setting compatible with double constructions and quadratic structures. In particular, it offers a structured environment in which compatibility, duality, and $r$-matrix constructions coexist coherently.
Further investigation may include extensions to deformation and quantum settings, where operator-equivariant structures could interact with quantum group constructions and operator-theoretic approaches to quantization. Moreover, the interplay between \ENL bialgebras, equivariant relative Rota--Baxter operators, and classical $r$-matrices suggests new families of solutions to the classical Yang--Baxter equation, with potential applications in geometry and mathematical physics.

We expect that the \ENL framework provides a flexible structural bridge between classical Lie theory, operator-based approaches to integrability, and emerging perspectives on quantization.

	\vspace{2mm}
	\noindent
	{\bf Acknowledgements. }  This research is supported by  NSF of Jilin Province (20260101013JJ) and  NSFC (12301034, 12461004, 12471060, W2412041).  Z.~Ravanpak gratefully acknowledges the hospitality of the research group at the Department of Mathematics, Jilin University, China, in September 2024, as well as the stimulating discussions following an invited talk, which helped shape the ideas of this work. She also acknowledges a research stay at the Erwin Schrödinger International Institute for 
Mathematics and Physics (ESI), University of Vienna, in October–December 2024 and in February 2025, where part of this 
work was carried out.


\begin{thebibliography}{ab}	
	\bibitem{Aguiar} M. Aguiar, Pre-Poisson algebras, \emph{Lett. Math. Phys.} {\bf54} (2000), 263-277.
	
	\bibitem{Andrada} A. Andrada and S. Salamon, Complex product structures on Lie algebras, \emph{Forum Math.} {\bf17} (2005), no. 2, 261-295.
	
	
	\bibitem{Bai} C. Bai, A unified algebraic approach to the classical Yang-Baxter equation, \emph{J. Phys. A: Math. Theor.} {\bf40} (2007), 11073-11082.
	
	\bibitem{BGN2010} C. Bai, L. Guo and X. Ni, Nonabelian generalized Lax pairs, the classical Yang--Baxter equation and PostLie algebras, \emph{Comm. Math. Phys.} {\bf297} (2010), 553-596.

		\bibitem{BaMaRa}
	A. Ballesteros, J. C. Marrero and Z. Ravanpak,  Poisson-Lie groups, bi-Hamiltonian systems and integrable deformations,  \emph{J. Phys. A: Math. Theor.} {\bf 50} (2017) 145204.
	
	\bibitem{BD} A. A. Belavin and V. G. Drinfel'd, Solutions of the classical Yang-Baxter equation for simple Lie algebras, \emph{Funct. Anal. Appl.} {\bf16} (1982), 159-180.
	
	
	\bibitem{Burde} D. Burde, Left-symmetric algebras, or pre-Lie algebras in geometry and physics, \emph{Central European J. Math.} {\bf4} (2006), no. 3, 323-357.
	
	\bibitem{CP} V. Chari and A. Pressley, \emph{A Guide to Quantum Groups}, Cambridge University Press, 1994.
	
	
	\bibitem{DamianouFernandes} P. A. Damianou and R. L. Fernandes, Integrable hierarchies and the modular class, \emph{Ann. Inst. Fourier} {\bf58} (2008), 107-137.
	
	\bibitem{Das-1} A. Das, Cohomology theory of Nijenhuis Lie algebras and (generic) Nijenhuis Lie bialgebras, arXiv:2502.16257.

	\bibitem{D} V. G. Drinfel'd, Hamiltonian structures on Lie groups, Lie bialgebras and the geometric meaning of the classical Yang Baxter equations, \emph{Soviet Math. Dokl.} {\bf27} (1983), 68-71.
	
	
	
	
	\bibitem{FokasFuchssteiner}
	A. S. Fokas and B. Fuchssteiner, Symplectic structures, their B\"acklund transformations and hereditary symmetries, \emph{Phys. D} {\bf4} (1981), 47-66.
	
	\bibitem{G3} M. Goncharov and V. Gubarev, Double Lie algebras of nonzero weight,
\emph{Adv. Math.} {\bf409} (2022), part B, Paper No. 108680, 30 pp.
	
	\bibitem{Goncharov} M. E. Goncharov and P. S. Kolesnikov, Simple finite-dimensional double algebras, \emph{J. Algebra} {\bf500} (2018), 425-438.
	
	\bibitem{Gerstenhaber} M. Gerstenhaber, The cohomology structure of an associative ring, \emph{Ann. Math.} {\bf78} (1963), 267-288.

	\bibitem{HaZo}
G. Haghighatdoost, Z. Ravanpak, and A. Rezaei-Aghdam,
 Some remarks on invariant Poisson quasi-Nijenhuis structures on Lie groups,	\emph{Int. J. Geom. Methods Mod. Phys.}  {\bf 16} (2019) 1950097.
	
	\bibitem{K} Y. Kosmann-Schwarzbach, Lie bialgebras, Poisson Lie groups and dressing transformations, In: Kosmann-Schwarzbach Y., Tamizhmani K.M., Grammaticos B. (eds) Integrability of Nonlinear Systems, \emph{Lecture Notes in Phys.} vol 638, Springer, Berlin, Heidelberg, 2004, 107-173.
	
	\bibitem{KoMa} Y. Kosmann-Schwarzbach and F. Magri, Poisson-Nijenhuis structures, \emph{Ann. Inst. Henri Poincare, Phys. Theor.} {\bf53} (1990), 35-81.
	
	\bibitem{KM} Y. Kosmann-Schwarzbach and F. Magri, Poisson-Lie groups and complete integrability I: Drinfeld bialgebras, dual extensions and their canonical representations, \emph{Ann. Inst. Henri Poincare} {\bf49} (1988), no. 4, 433-460.
	
	\bibitem{KosmannPN} Y. Kosmann-Schwarzbach, Poisson-Nijenhuis structures, in: \emph{Lie Algebroids and Related Topics in Differential Geometry}, Banach Center Publ. {\bf54} (2001), 1-23.
	
	\bibitem{Ku} B. A. Kupershmidt, What a classical $r$-matrix really is, \emph{J. Nonlinear Math. Phys.} {\bf6} (1999), 448-488.
	
	\bibitem{Lang-Sheng} H. Lang and Y. Sheng, Factorizable Lie bialgebras, quadratic Rota--Baxter  Lie algebras and Rota--Baxter  Lie bialgebras, \emph{Comm. Math. Phys.} {\bf397} (2023), no. 2, 763-791.
	
	\bibitem{Li-Ma} H. Li and T. Ma, Classical Yang-Baxter equations and Nijenhuis operators for Lie algebras, arXiv:2502.18717.
	
	
	\bibitem{Lu2} J-H. Lu and A. Weinstein, Poisson Lie groups, dressing transformations and Bruhat decompositions, \emph{J. Diff. Geom.} {\bf31} (1990), 501-526.
	
	\bibitem{Magri} F. Magri, A simple model of the integrable Hamiltonian equation, \emph{J. Math. Phys.} {\bf19} (1978), 1156-1162.
	
	\bibitem{MagriMorosiAnn} F. Magri and C. Morosi, A geometrical characterization of integrable Hamiltonian systems through the theory of Poisson-Nijenhuis manifolds, \emph{Ann. Inst. H. Poincare Phys. Theor.} {\bf40} (1984), 305-331.
	
	\bibitem{Majid} S. H. Majid, Matched pairs of Lie groups associated to solutions of the Yang--Baxter equations, \emph{Pacific J. Math.} {\bf141} (1990), no. 2, 311-332.
	
	\bibitem{Panasyuk2006} M. Panasyuk, Algebraic Nijenhuis operators and Kronecker Poisson pencils, \emph{Diff. Geom. Appl.} {\bf24} (2006), 482-491.
	
	\bibitem{RS} N. Reshetikhin and M. A. Semenov-Tian-Shansky, Quantum $R$-matrices and factorization problems, \emph{J. Geom. Phys.} {\bf5} (1988), 533-550.
	
	
	\bibitem{STS} M. A. Semenov-Tian-Shansky, What is a classical $r$-matrix? \emph{Funct. Anal. Appl.} {\bf17} (1983), 259-272.
	
	
	\bibitem{Takeuchi} M. Takeuchi, Matched pairs of groups and bismash products of Hopf algebras, \emph{Comm. Algebra} {\bf9} (1981), 841-882.
	
	
			\bibitem{RaReHa}
	Z. Ravanpak, A. Rezaei-Aghdam and Gh. Haghighatdoost, Invariant Poisson Nijenhuis structures on Lie groups and classification,
	\emph{Int. J. Geom. Methods Mod. Phys.} {\bf 15} (2018), 1850059.	
	
	\bibitem{Zohreh} Z. Ravanpak, NL bialgebras, \emph{Adv. Theor. Math. Phys.} {\bf29} (2025), no. 5,  1407-1445. 
	
	
	\bibitem{Vicedo2015} B. Vicedo, Deformed integrable $\sigma$-models, classical $R$-matrices and classical exchange algebra on Drinfel'd doubles, \emph{J. Phys. A: Math. Theor.} {\bf48} (2015), 355203.

\end{thebibliography}
\end{document}